% last modifited  September 6, 2018

\documentclass{amsart}[12pt]
\parskip=12pt
\def\doctype{}

\usepackage{latexsym,amssymb}
\usepackage{fancyhdr}
\usepackage{lineno}
\usepackage{hyperref}

\newcommand\lam{\lambda}

\newcommand{\cA}{\mathcal{A}}
\newcommand{\cB}{\mathcal{B}}

\newcommand\Z{\mathbb{Z}}
\newcommand{\comment}[1]{}

\numberwithin{equation}{section}

%% margin adjustment %%%%%%%%%%%%%%%%%%%%%%%%%%%%%%%%%%%%%%%%

\setlength\parindent{0pt}
\setlength{\textwidth}{6in}
\setlength{\oddsidemargin}{0.25in}
\setlength{\evensidemargin}{0.25in}
\setlength{\topmargin}{0in}
\setlength{\textheight}{8.5in}

%% head/foot tweak %%%%%%%%%%%%%%%%%%%%%%%%%%%%%%%%%%%%%%%%

\pagestyle{fancy}
\fancyhead[C]{}
\fancyhead[R]{}
\fancyhead[L]{}
\cfoot{\vspace{5pt} \thepage}

\fancypagestyle{titlepage}{
\fancyhead[R]{\doctype}
\fancyhead[CL]{}
\cfoot{\vspace{5pt} \thepage}
}

%% section heading tweak %%%%%%%%%%%%%%%%%%%%%%%%%%%%%%%%%%%%%%

\let\oldsection\section
\newcommand\boldsection[1]{\oldsection{\bf #1}}
\newcommand\starsection[1]{\oldsection*{\bf #1}}
\makeatletter
\renewcommand\section{\@ifstar\starsection\boldsection}
\makeatother

%% theorem spacing/naming tweak %%%%%%%%%%%%%%%%%%%%%%%%%%%%%%%%%%

\newtheoremstyle{theorem}
  {12pt}		  % space above
  {0pt}  % space below
  {\sl}  % bofy font
  {\parindent}     % ident - empty=no indent,  \parindent= paragraph indent
  {\bf}  % thm head font
  {. }    % punctuation after thm head
  { }    % space after thm head: `` ``=normal \newline=linebreak
  {}     % thm head specification
\theoremstyle{theorem}
\newtheorem{thm}{Theorem}[section]  % 1st argument is your name for it
\newtheorem{lemma}[thm]{Lemma}     % 2nd argument is what is printed
\newtheorem{cor}[thm]{Corollary}

\newtheorem{cons}[thm]{Construction}
\newtheorem{prop}[thm]{Proposition}

\newtheoremstyle{definition}
  {12pt}		  % space above
  {0pt}  % space below
  {}  % bofy font
  {\parindent}     % ident - empty=no indent,  \parindent= paragraph indent
  {\bf}  % thm head font
  {. }    % punctuation after thm head
  { }    % space after thm head: `` ``=normal \newline=linebreak
  {}     % thm head specification
\theoremstyle{definition}

\newtheorem{ex}[thm]{Example}

\newcommand\rk{{\sc Remark.} }

\renewcommand{\proofname}{Proof}

\makeatletter
\renewenvironment{proof}[1][\proofname]{\par
  \pushQED{\qed}%
  \normalfont \partopsep=\z@skip \topsep=\z@skip
  \trivlist
  \item[\hskip\labelsep
        \scshape
    #1\@addpunct{.}]\ignorespaces
}{%
  \popQED\endtrivlist\@endpefalse
}
\makeatother

%% date placement tweak %%%%%%%%%%%%%%%%%%%%%%%%%%%%%%%%%%%%%

\makeatletter
\renewcommand*\@maketitle{%
  \normalfont\normalsize
  \@adminfootnotes
  \@mkboth{\@nx\shortauthors}{\@nx\shorttitle}%
  \global\topskip42\p@\relax % 5.5pc   "   "   "     "     "
  \@settitle
  \ifx\@empty\authors \else {\vskip 1em
\vtop{\centering\shortauthors\@@par}} \fi
  \ifx\@empty\@date \else {\vskip 1em \vtop{\centering\@date\@@par}}\fi % MY CHANGE
  \ifx\@empty\@dedicatory
  \else
    \baselineskip18\p@
    \vtop{\centering{\footnotesize\itshape\@dedicatory\@@par}%
      \global\dimen@i\prevdepth}\prevdepth\dimen@i
  \fi
  \@setabstract
  \normalsize
  \if@titlepage
    \newpage
  \else
    \dimen@34\p@ \advance\dimen@-\baselineskip
    \vskip\dimen@\relax
  \fi
} % end \@maketitle
\renewcommand*\@adminfootnotes{%
  \let\@makefnmark\relax  \let\@thefnmark\relax
%  \ifx\@empty\@date\else \@footnotetext{\@setdate}\fi% MY CHANGE
  \ifx\@empty\@subjclass\else \@footnotetext{\@setsubjclass}\fi
  \ifx\@empty\@keywords\else \@footnotetext{\@setkeywords}\fi
  \ifx\@empty\thankses\else \@footnotetext{%
    \def\par{\let\par\@par}\@setthanks}%
  \fi
\thispagestyle{titlepage}
}
\makeatother

%%%%%%%%%%%%%%%%%%%%%%%%%%%%%%%%%%%%%%%%%%%%%%%%%%%
%%%%%%%%%%%%%%%%%%%%%%%%%%%%%%%%%%%%%%%%%%%%%%%%%%%

\begin{document}

\title[Incomplete designs]{\large Constructions and uses of incomplete
pairwise balanced designs}

\author{Peter J.~Dukes}
\address{\rm Peter J.~ Dukes:
Mathematics and Statistics,
University of Victoria, Victoria, Canada
}
\email{dukes@uvic.ca}

\author{Esther R.~Lamken}
\address{\rm Esther R.~Lamken:
773 Colby Street, San Francisco, CA, USA 94134
}
\email{lamken@caltech.edu}

\thanks{Research of Peter Dukes is supported by NSERC}

\date{\today}

\begin{abstract}
We give explicit constructions for incomplete pairwise balanced designs IPBD$((v;w),K)$, or, equivalently, edge-decompositions of a difference of two cliques $K_v \setminus K_w$ into cliques whose sizes belong to the set $K$.   Our constructions produce such designs whenever $v$ and $w$ satisfy the usual divisibility conditions, have ratio $v/w$ bounded away from the smallest value in $K$ minus one, say $v/w > k-1+\epsilon$, for $k =\min K$ and $\epsilon>0$, and are sufficiently large (depending on $K$ and $\epsilon$).
As a consequence, some new results are obtained on many related designs, including class-uniformly resolvable designs, incomplete mutually orthogonal latin squares, and group divisible designs. We also include several other applications that illustrate the power of using IPBDs as `templates'.

\end{abstract}
\maketitle
\hrule

\section{Introduction}
\label{intro}

\subsection{Designs and decompositions}

Let $v$ be a positive integer and $K \subseteq \Z_{\ge 2} :=
\{2,3,4,\dots\}$.  A \emph{pairwise balanced design}
PBD$(v,K)$ is a pair $(V,\cB)$, where
\begin{itemize}
\item
$V$ is a $v$-element set of \emph{points};
\item
$\cB \subseteq \cup_{k \in K} \binom{V}{k}$ is a family of of subsets of
$V$, called \emph{blocks}; and
\item
every two distinct points appear together in exactly one block.
\end{itemize}
In alternative language, a PBD$(v,K)$ is an edge-decomposition of the complete graph of order $v$ into
cliques whose sizes come from the set $K$.  A PBD$(v,K)$ with $K=\{k\}$ is also known as a Steiner system S$(2,k,v)$
or a balanced incomplete block design, $(v,k,1)$-BIBD.  Pairwise
balanced designs (and BIBDs in particular) permit an
additional parameter $\lam$ and ask that any two distinct points appear
together in exactly $\lam$ blocks.  For the moment, though, our attention is
restricted to $\lam=1$.

There are necessary divisibility conditions for existence of PBD$(v,K)$.  In what follows, put $\alpha(K):=\gcd\{k-1: k \in K\}$ and $\beta(K):=\gcd\{k(k-1): k \in K\}$.

\begin{prop}
\label{neccond}
The existence of a PBD$(v,K)$ implies
\begin{eqnarray}
\label{local-pbd}
v-1 &\equiv& 0 \pmod{\alpha(K)} ~\text{and}\\
\label{global-pbd}
v(v-1)  &\equiv& 0 \pmod{\beta(K)}.
\end{eqnarray}
\end{prop}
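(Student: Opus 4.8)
The plan is to prove each congruence by a standard double-counting argument, using throughout that every pair of distinct points lies in exactly one block.

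For \eqref{local-pbd}, I would fix a point $x \in V$ and examine the blocks containing $x$. Since each point $y \neq x$ is paired with $x$ in exactly one block, the blocks through $x$ partition $V \setminus \{x\}$, with a block $B \ni x$ accounting for exactly $|B|-1$ of these points. Counting $V \setminus \{x\}$ this way gives
\[
v-1 = \sum_{B \ni x} (|B|-1).
\]
Every term $|B|-1$ is a multiple of $\alpha(K) = \gcd\{k-1 : k \in K\}$, since $|B| \in K$; hence the right-hand side, and therefore $v-1$, is divisible by $\alpha(K)$, as required.

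For \eqref{global-pbd}, I would instead count incidences between unordered pairs of points and blocks. Each of the $\binom{v}{2}$ pairs lies in exactly one block, while a block $B$ contributes $\binom{|B|}{2}$ pairs, so
\[
\binom{v}{2} = \sum_{B \in \cB} \binom{|B|}{2}.
\]
To invoke divisibility cleanly I would first clear denominators: multiplying by $2$ yields the integer identity $v(v-1) = \sum_{B \in \cB} |B|(|B|-1)$. Each summand $|B|(|B|-1)$ is a multiple of $\beta(K) = \gcd\{k(k-1): k \in K\}$, so $\beta(K)$ divides the sum, giving $\beta(K) \mid v(v-1)$.

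Both arguments are elementary, so I do not anticipate a genuine obstacle; the only point demanding care is in the second part, where one must pass from the pair-count identity written with $\binom{k}{2}$ to the integer identity $\sum_B |B|(|B|-1) = v(v-1)$ \emph{before} applying divisibility, since $\beta(K)$ is defined via the integers $k(k-1)$ rather than the (possibly half-integer) binomials $\binom{k}{2}$. Keeping that bookkeeping straight is the whole content of the step.
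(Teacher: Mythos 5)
Your proof is correct and follows essentially the same route as the paper: counting the points of $V\setminus\{x\}$ partitioned by the blocks through a fixed point $x$ for \eqref{local-pbd}, and counting pairs partitioned by blocks (cleared of the factor $2$) for \eqref{global-pbd}. The only cosmetic difference is that the paper phrases the conclusion as ``$v-1$ (resp.\ $v(v-1)$) is an integer linear combination of the $k-1$ (resp.\ $k(k-1)$), $k \in K$,'' whereas you observe that each summand is individually a multiple of the relevant gcd --- the same divisibility argument.
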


\begin{proof}
The $\binom{v}{2}$ pairs of points are partitioned into $\binom{k}{2}$ pairs
within each block of size $k \in K$.  It follows that $v(v-1)$ is an integer
linear combination of $k(k-1)$, $k \in K$.  This proves (\ref{global-pbd}).

Given any point $x \in V$, let $\mathcal{B}_x:= \{B \in \mathcal{B}: x \in
B\}$.  The points in $V\setminus \{x\}$ are partitioned by $B \setminus
\{x\}$, $B \in \mathcal{B}_x$.  So $v-1$ is an integer linear combination of
$k-1$, $k \in K$.  This proves  (\ref{local-pbd}).
\end{proof}

The `asymptotic sufficiency' of these conditions is a celebrated result due
to Richard M.~Wilson.

\begin{thm}[Wilson, \cite{RMW1}]
\label{asymptotic-bd}
There exist PBD$(v,K)$ for all sufficiently large $v$ satisfying the divisibility conditions
$(\ref{local-pbd})$ and $(\ref{global-pbd})$.
\end{thm}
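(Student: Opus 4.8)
The plan is to build the designs by a divide-and-conquer strategy whose atoms are finite geometries and whose glue is the theory of group divisible designs (GDDs). First I would set up the standard infrastructure. A $K$-GDD of a given group type is an edge-decomposition of a complete multipartite graph into cliques of sizes in $K$; a transversal design $\mathrm{TD}(k,n)$ is the special case with $K=\{k\}$ and $k$ groups of size $n$, and is equivalent to $k-2$ mutually orthogonal latin squares of order $n$. The key analytic input is the asymptotic existence of MOLS: for each fixed $k$ there is an $n_0(k)$ such that a $\mathrm{TD}(k,n)$ exists for all $n \ge n_0(k)$. This is where genuine number theory enters, through the distribution of prime powers in intervals. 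The other atoms are the projective and affine planes of prime-power order $q$, which are a $\mathrm{PBD}(q^2+q+1,\{q+1\})$ and a resolvable $\mathrm{PBD}(q^2,\{q\})$ respectively, together with their truncations, which supply a plentiful stock of admissible block sizes and ingredient GDDs.

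Next I would reduce the problem to a statement about PBD-closure. Write $B(K)$ for the set of all orders $v$ admitting a $\mathrm{PBD}(v,K)$. The central formal fact is that $B(K)$ is PBD-closed, that is $B(B(K)) = B(K)$: substituting, into the points of a $\mathrm{PBD}(v,B(K))$, designs whose blocks are drawn from $K$ yields a $\mathrm{PBD}(v,K)$. Hence it suffices to show that $B(K)$ contains every sufficiently large $v$ satisfying $(\ref{local-pbd})$ and $(\ref{global-pbd})$; the divisibility conditions persist along the construction because $\alpha(K)$ and $\beta(K)$ are precisely the local and global invariants of the closure. A useful first reduction is to replace $K$ by a finite subset having the same $\alpha$ and $\beta$, which is possible since the two gcd's stabilize on a finite subset.

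The engine of the construction is Wilson's Fundamental Construction. Given a master GDD, a nonnegative integer weight on each of its points, and, for each block, an ingredient GDD or $\mathrm{TD}$ whose group sizes match the chosen weights, one obtains a composite GDD whose groups are the weighted images of the original groups. Combined with two elementary operations, namely adjoining one or more points ``at infinity'' to the groups and then filling each enlarged group with a smaller PBD of admissible order, this inflates a handful of seed designs into designs of many orders. The strategy is therefore to locate seeds of a convenient shape inside $B(K)$ and then propagate them with the composition-and-fill machinery.

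The hard part, and the real content of the theorem, is the number-theoretic propagation that guarantees one actually reaches \emph{every} large admissible $v$ rather than leaving infinitely many gaps. I would first use Dirichlet's theorem on primes in arithmetic progressions to produce, in each residue class permitted by $(\ref{local-pbd})$ and $(\ref{global-pbd})$, infinitely many prime powers $q \equiv 1 \pmod{\alpha(K)}$ whose associated geometries lie in $B(K)$; these furnish seeds in all the necessary residues. The delicate step is then to show, by induction on $v$, that starting from finitely many such seeds the construction covers a complete tail of each admissible residue class: given a target $v$, one selects a $\mathrm{TD}$ on $k$ groups whose common group size $n$ may be taken anywhere in a long interval (available once $n \ge n_0(k)$), truncates and weights it to hit $v$, and verifies that all the ingredient and fill orders it calls for are themselves admissible, strictly smaller, and already known to lie in $B(K)$. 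Controlling this recursion, so that the ingredient orders are simultaneously admissible, smaller, and available with enough room to maneuver, is the crux; it is exactly here that the hypothesis ``$v$ sufficiently large'' is consumed, and the finitely many small orders for which the recursion cannot yet turn are absorbed into the unspecified threshold constant.
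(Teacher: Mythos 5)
You should first be aware that the paper contains no proof of Theorem~\ref{asymptotic-bd} to compare against: it is quoted as Wilson's theorem, cited to \cite{RMW1}, and everything else in the paper is built on top of it. So your sketch has to be measured against Wilson's actual argument, which spans \cite{RMW0,RMW1,RMW2}. Your outline does correctly name the standard infrastructure of that argument --- PBD-closure of the set $B(K)$ of realizable orders, reduction to a finite subset of $K$ with the same $\alpha$ and $\beta$, Wilson's Fundamental Construction, asymptotic existence of transversal designs \cite{CES}, and Dirichlet's theorem. But there is a genuine gap at the single most important point: the seeds. You assert that Dirichlet's theorem yields, in each admissible residue class, prime powers $q$ ``whose associated geometries lie in $B(K)$.'' That is false as stated and cannot be repaired by truncation. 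A projective plane of order $q$ is a PBD$(q^2+q+1,\{q+1\})$ and an affine plane is a resolvable PBD$(q^2,\{q\})$; their block sizes are $q+1$ and $q$, which for large $q$ do not lie in $K$ (certainly not after your own reduction to a finite subset of $K$), so these geometries belong to $B(K)$ only if $q+1$, respectively $q$, is \emph{already} known to lie in $B(K)$ --- which is circular. In Wilson's proof, planes and their truncations serve as \emph{master} designs (sources of TDs and GDD frames), never as seeds. The seeds come from an entirely separate and genuinely hard construction, the main content of \cite{RMW2}: via cyclotomy in $\mathrm{GF}(q)$ (character-sum estimates on cyclotomic classes), every sufficiently large prime power $q \equiv 1 \pmod{k(k-1)}$ carries a $(q,k,1)$-difference family, hence $q \in B(\{k\})$. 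Without this ingredient, or some substitute for it, your composition-and-fill recursion has no base: Wilson's Fundamental Construction and filling can only reproduce orders from orders already in $B(K)$, so the induction never starts.

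A second defect is that your ``reduction to PBD-closure'' is circular as written: after observing $B(B(K))=B(K)$ you conclude that ``it suffices to show that $B(K)$ contains every sufficiently large $v$ satisfying (\ref{local-pbd}) and (\ref{global-pbd}),'' but that is verbatim the theorem, not a reduction of it. In Wilson's argument the closure property is exploited through a structure theorem for PBD-closed sets (\cite{RMW1}): a closed set that contains appropriate generating elements must eventually contain entire congruence classes modulo $\beta(K)$, and the cyclotomic seeds together with the TD machinery produce those generators. Your final paragraph honestly flags the recursion-control step as ``the crux,'' but between that unexecuted step and the missing finite-field construction, what you have is a correct map of the proof's outer architecture with its two load-bearing components --- the difference-family theorem and the structure theory of closed sets --- acknowledged rather than supplied.
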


Theorem~\ref{asymptotic-bd} lays the foundation for a rich existence theory
for combinatorial designs that fit the general framework of decomposing complete graphs; see for instance
\cite{Draganova,DL,LW,Wilson75}.  These uses include designs with extra conditions, 
such as resolvable designs and designs with automorphisms or tournament-style conditions, and
even include apparent generalizations, such as group-divisible designs and graph decompositions.

Recently, there has been exciting progress on decompositions of dense or quasi-random (hyper)graphs into prescribed
cliques (or graphs, in general); see \cite{BKLO,GKLO,Keevash,Keevash2}.  These settle existence of many types of
(extremely large) designs, even $t$-designs.  One drawback is that explicit constructions are practically lost in this setting, 
either by demanding a large pre-structure (absorbers) or repeatedly `repacking' blocks (randomized construction).

In this article, we develop explicit constructions suitable for a specific `boundary case', namely the decomposition
of a difference of cliques, and point out applications to various related designs.

\subsection{Incomplete designs}
Let $v \ge w$ be positive integers and $K\subseteq \Z_{\ge 2}$.  An
\emph{incomplete pairwise balanced design} IPBD$((v;w),K)$ is a triple
$(V,W,\cB)$ where
\begin{itemize}
\item
$V$ is a set of $v$ points and $W \subset V$ is a \emph{hole} of size $w$;
\item
$\cB \subseteq \cup_{k \in K} \binom{V}{k}$ is a family of blocks;
\item
no two distinct points of $W$ appear in a block; and
\item
every two distinct points not both in $W$ appear together in exactly one
block.
\end{itemize}
An equivalent object is a PBD$(v,K \cup \{w^*\})$, where the star indicates
that there is exactly one block of size $w$ if $w \not\in K$ and at least
one block of size $w$ if $w \in K$.  Given an IPBD$((v;w),K)$, say
$(V,W,\cB)$, the system $(V,\cB \cup \{W\})$ is a PBD$(v,K\cup \{w^*\})$.

Another closely related notion is that of a PBD$(v,K)$, say $(V,\cB)$, containing
a \emph{subdesign} PBD$(w,K)$, say $(W,\cA)$, where we have $W \subseteq V$
and $\cA \subseteq \cB$.  We obtain an IPBD$((v;w),K)$ as $(V,W,\cB
\setminus \cA)$.  On the other hand, an IPBD with hole $W$ can be `filled'
with a PBD (or another IPBD) on $W$, but only when this smaller design
exists.

The case $w=v$ leads to $\cB=\emptyset$ and we exclude this in what follows.
The case $w=1$ reduces to a PBD$(v,K)$, since such a hole contains no pairs.
By analogy with (\ref{local-pbd}) and (\ref{global-pbd}), there are natural
divisibility conditions on the parameters.

\begin{prop}
\label{neccond-h}
The existence of an IPBD$((v;w),K)$ implies
\begin{eqnarray}
\label{local}
v-1~\equiv~w-1 &\equiv& 0 \pmod{\alpha(K)}, ~\text{and}\\
\label{global}
v(v-1) - w(w-1) &\equiv& 0 \pmod{\beta(K)}.
\end{eqnarray}
\end{prop}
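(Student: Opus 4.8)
The plan is to mirror the proof of Proposition~\ref{neccond}, adapting the two double-counting arguments to account for the hole $W$. For the global condition (\ref{global}), I would count the pairs of points that must be covered by blocks: these are all $\binom{v}{2}$ pairs except the $\binom{w}{2}$ pairs lying inside $W$, giving $\binom{v}{2} - \binom{w}{2} = \tfrac12\big(v(v-1) - w(w-1)\big)$ covered pairs. Since each block of size $k \in K$ accounts for exactly $\binom{k}{2}$ of these, and the blocks partition the covered pairs, $v(v-1) - w(w-1)$ is an integer linear combination of the values $k(k-1)$, $k \in K$, and is therefore divisible by $\beta(K)$.

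For the local condition (\ref{local}), I would fix a point $x$ and examine the blocks through it, splitting into two cases according to whether $x$ lies in the hole. If $x \notin W$, then every pair $\{x,y\}$ with $y \ne x$ must be covered, since such a pair is never contained in $W$; hence the sets $B \setminus \{x\}$ for $B \ni x$ partition the $v-1$ remaining points, and as each has size $k-1$ for some $k \in K$, we obtain $\alpha(K) \mid v-1$. If instead $x \in W$, then $x$ is paired only with the $v-w$ points outside $W$ (no two points of $W$ share a block), and each block through $x$ meets $W$ in $x$ alone, so the sets $B \setminus \{x\}$ partition $V \setminus W$; this yields $\alpha(K) \mid v-w$.

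Combining the two relations gives $\alpha(K) \mid (v-1)-(v-w) = w-1$ as well, so both $v-1$ and $w-1$ are congruent to $0$ modulo $\alpha(K)$, which is precisely (\ref{local}). Because the case $w=v$ has been excluded, there is always at least one point outside $W$, so the first case is nonvacuous, and the hole being nonempty guarantees the second. I do not anticipate any genuine obstacle, as the argument is a routine adaptation of the complete case; the one point deserving care is the second case, where one must observe that a block meeting the hole does so in a single point, so that deleting $x$ leaves a partition of $V \setminus W$ rather than of all $v-1$ points distinct from $x$.
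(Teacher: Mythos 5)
Your proof is correct and is exactly the routine adaptation of the proof of Proposition~\ref{neccond} that the paper intends: Proposition~\ref{neccond-h} is stated there without proof, introduced only ``by analogy with'' the complete case. Both your pair count giving (\ref{global}) and your two-case argument for (\ref{local}) --- in particular the observation that the blocks through a hole point have their deleted-point residues partitioning $V \setminus W$, giving $\alpha(K) \mid v-w$ and hence $\alpha(K) \mid w-1$ --- are sound.
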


%We say integers $v$ and $w$ are \emph{admissible} if (\ref{local}) and (\ref{global}) hold.  
In \cite{DvB}, an existence result was presented for fixed $w$ and large $v$.

\begin{thm}[\cite{DvB}]
\label{fixed}
Let $K \subseteq \Z_{\ge 2}$ and let $w \equiv 1 \pmod{\alpha(K)}$.  
There exists IPBD$((v;w),K)$ for all sufficiently large $v$ satisfying 
$(\ref{local})$ and $(\ref{global})$.
\end{thm}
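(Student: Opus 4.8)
The plan is to realise the hole as a single \emph{unfilled} group of a suitable group divisible design (GDD) and to obtain every admissible $v$ by varying the sizes of the remaining groups. Recall that a $K$-GDD is an edge-decomposition of a complete multipartite graph into cliques of sizes in $K$; if its groups are $G_0,G_1,\dots,G_u$ and we place a PBD$(|G_i|,K)$ on each $G_i$ with $i\ge 1$ while leaving $G_0$ untouched, then every pair of points is covered exactly once except for the pairs inside $G_0$, and (since GDD blocks meet $G_0$ at most once and the filling blocks avoid $G_0$) no block contains two points of $G_0$. The result is precisely an IPBD$((v;w),K)$ with hole $G_0$, where $w=|G_0|$ and $v=|G_0|+\sum_{i\ge 1}|G_i|$. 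The key point is that \emph{no} design on the $w$ hole points is required, only PBDs on the filled groups, which Theorem~\ref{asymptotic-bd} supplies once the $|G_i|$ are large and lie in the residue classes dictated by $(\ref{local})$ and $(\ref{global})$. Thus the theorem reduces to two tasks: (i) constructing, for the fixed $w$, a $K$-GDD with one group of size exactly $w$ and the remaining groups of large, PBD-fillable sizes; and (ii) arranging those sizes so that $v=w+\sum_{i\ge 1}|G_i|$ ranges over all sufficiently large $v$ satisfying $(\ref{local})$ and $(\ref{global})$.

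For task (i) I would manufacture the GDD from Wilson's theorem by inflation. First apply Theorem~\ref{asymptotic-bd} to get a PBD$(n,K)$ for a convenient large $n$; deleting one point turns it into a master $K$-GDD whose groups have the small sizes $k-1$, $k\in K$. I would then apply Wilson's Fundamental Construction: assign a weight to each point and replace each block by a $K$-GDD whose type is the multiset of weights on that block. Taking a single large weight $t$ on the active points makes the required block ingredients transversal designs TD$(k,t)$, which exist for all $k\in K$ once $t$ is large, and inflates each group to size $(k_i-1)t$; to carve out a group of size exactly $w$, one assigns weight $w$ to a single master point and weight $0$ to its companions in that group. Filling the large groups with the PBDs from Theorem~\ref{asymptotic-bd} then yields the desired incomplete design.

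Task (ii) is number-theoretic bookkeeping. By Theorem~\ref{asymptotic-bd} the fillable group sizes comprise every sufficiently large $g$ with $g\equiv 1\pmod{\alpha(K)}$ and $g(g-1)\equiv 0\pmod{\beta(K)}$, so they form a full union of arithmetic progressions for large $g$. One then checks that finite sums $w+\sum_i g_i$ of such sizes, together with the freedom in $n$, in the weight $t$, and in which points are zero-weighted, realise every sufficiently large admissible $v$. Here the hypothesis $w\equiv 1\pmod{\alpha(K)}$ makes the hole contribute trivially to the local condition $(\ref{local})$, while the global condition $(\ref{global})$ is matched automatically because every covered pair is accounted for by a clique of size in $K$; this step is a Chinese-remainder/Frobenius argument.

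The main obstacle is the interaction between (i) and (ii): pinning one group at the fixed — and possibly \emph{small} — value $w$ while simultaneously forcing the total to meet a prescribed size-and-residue target. Small $w$ is genuinely delicate, since the transversal-design and incomplete-TD ingredients one would like to use to isolate a group of size $w$ can simply fail to exist in small orders (for instance there is no TD$(k,6)$ when $k\ge 4$). I expect the cleanest remedy is to adjoin the $w$ hole points only at the final filling stage, covering each pair between a filled group and the hole by an ingredient incomplete design IPBD$((|G_i|+w;w),K)$ on that group together with the common hole $W$. This localises all the small-order difficulty into a finite list of ingredient IPBDs with a fixed hole, to be established directly; once these are in hand, the inflation of task~(i) and the arithmetic of task~(ii) propagate them to all sufficiently large $v$.
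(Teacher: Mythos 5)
The filling template you describe (one unfilled group serving as the hole, or groups filled by IPBDs sharing a common hole) is the right frame, and it is how such assemblies are done in this paper (cf.\ Construction~\ref{igdd-fill}); note also that the paper itself does not prove Theorem~\ref{fixed} --- it imports it from \cite{DvB}, developing the parallel machinery only for GDDs of type $\alpha^n w^1$ in Sections~2--4. The genuine gap is in your task~(i). In Wilson's Fundamental Construction, your weighting (weight $w$ on one master point, $0$ on its group companions, $t$ elsewhere) requires, for each master block $B$ meeting the special group, an ingredient $K$-GDD of type $t^{|B|-1}$ (if $B$ passes through a zero-weighted companion) or of type $t^{|B|-1}w^1$ (if $B$ passes through the $w$-weighted point). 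For $|B|=k=\min K\ge 3$ \emph{neither exists}: in a $K$-GDD of type $t^{k-1}$, blocks meet each of the $k-1$ groups at most once and so have size at most $k-1<\min K$, yet cross-group pairs must be covered; and in a $K$-GDD of type $t^{k-1}w^1$ every block must be a transversal of the $k$ groups, so counting pairs between two $t$-groups gives $t^2$ blocks while counting pairs between a $t$-group and the $w$-group gives $tw$ blocks, forcing $w=t$. Since you cannot prevent blocks of size $\min K$ from meeting the special group, the construction fails exactly where the hole is to be carved out. (Nor does truncating a group of a TD$(k,t)$ to $w$ points help: that creates blocks of size $k-1\notin K$, which is the basic obstruction this whole theory exists to circumvent.)

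Your fallback localizes everything into ingredient IPBD$((|G_i|+w;w),K)$'s ``to be established directly,'' but for an arbitrary fixed $w\equiv 1\pmod{\alpha(K)}$ this is not a finite check --- it is the entire content of the theorem, so the argument as proposed is circular. This is precisely where \cite{DvB} (and, for the GDD analogue Theorem~\ref{fixed-hole}, Sections~2--4 of this paper) must work hard: first examples come from truncated transversal designs whose $\{m,m+1\}$ block sizes are then \emph{inflated by weight $\alpha$}, using uniform GDD$(\alpha^m,K)$ and GDD$(\alpha^{m+1},K)$ from Theorem~\ref{asym-gdd} as block ingredients (Lemma~\ref{sparse1}, Proposition~\ref{sparse-class1}) --- this is the legitimate version of your weighting idea; a key congruence class is then reached via projective extensions of resolvable designs (Proposition~\ref{sparse-resol}); and the remaining admissible classes are generated by filling incomplete GDDs, whose existence rests on the Lamken--Wilson edge-colored decomposition theory (Theorem~\ref{igdd-exist}, Construction~\ref{igdd-fillgdd}, Proposition~\ref{fixed-arb}). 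Your task~(ii) likewise underestimates this last step: because condition (\ref{global}) is quadratic in $v$, realizing \emph{every} sufficiently large admissible $v$ as $w+\sum_i|G_i|$ requires exactly this congruence-class generation, not merely a Frobenius/Chinese-remainder count over unions of arithmetic progressions of fillable group sizes.
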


There is another necessary condition taking the form of an inequality.

\begin{prop}
\label{holesize-bound}
Let $k=\min K$.  Every point in $V \setminus W$ is incident with at most
$\frac{v-1}{k-1}-w$ blocks disjoint from $W$.  Therefore, the existence of
an IPBD$((v;w),K)$ with $v>w$ implies
\begin{equation}
\label{ineq-h}
v \ge (k-1)w +1.
\end{equation}
\end{prop}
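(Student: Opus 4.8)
The plan is to fix a single point $x \in V \setminus W$ and count the blocks through it, mimicking the local counting argument used for (\ref{local-pbd}) in Proposition~\ref{neccond}. Since $x \notin W$, every pair $\{x,y\}$ with $y \in V \setminus \{x\}$ has its two points not both in $W$, so it lies in exactly one block. Hence the sets $B \setminus \{x\}$, taken over blocks $B$ containing $x$, partition $V \setminus \{x\}$; in particular, distinct blocks through $x$ meet only in $x$.

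First I would bound the total number of blocks through $x$. Each such block has size at least $k = \min K$, so each contributes at least $k-1$ points to the partition of the $(v-1)$-element set $V \setminus \{x\}$. Therefore $x$ lies in at most $\frac{v-1}{k-1}$ blocks.

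Next I would isolate the blocks through $x$ that meet the hole. Because no block contains two points of $W$, each block through $x$ meets $W$ in at most one point; and because each point $u \in W$ forms a pair $\{x,u\}$ lying in exactly one block, the points of $W$ are matched bijectively to a set of exactly $w$ distinct blocks through $x$. Subtracting, the number of blocks through $x$ disjoint from $W$ is at most $\frac{v-1}{k-1} - w$, which is the first assertion.

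Finally, the inequality (\ref{ineq-h}) drops out from nonnegativity. Since $v > w$ guarantees that a point $x \in V \setminus W$ exists, and the number of blocks through $x$ disjoint from $W$ is a nonnegative integer bounded above by $\frac{v-1}{k-1} - w$, we must have $\frac{v-1}{k-1} - w \ge 0$, which rearranges to $v \ge (k-1)w + 1$. The only real subtlety is the bookkeeping in the middle step---verifying that \emph{exactly} $w$ blocks through $x$ touch the hole---since this is where both defining properties of the hole (no internal pairs, and every cross pair covered once) are used; everything else is routine double counting.
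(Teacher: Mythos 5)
Your proof is correct and follows essentially the same route as the paper: fix a point $x \in V \setminus W$, bound the total number of blocks through $x$ by $\frac{v-1}{k-1}$ using $k=\min K$, observe that exactly $w$ of these blocks meet the hole (at most one hole point per block, and each hole point paired with $x$ exactly once), and conclude by nonnegativity of the count of hole-disjoint blocks. The only difference is that you spell out the partition of $V\setminus\{x\}$ and the final rearrangement explicitly, which the paper leaves implicit.
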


\begin{proof}
Since $k$ is the smallest block size, a point $x \in V \setminus W$ is in at
most $\frac{v-1}{k-1}$ blocks.  Each such block covers at most one point in
$W$, and conversely every point in $W$ is together in exactly one block with
$x$.  It follows that, of the blocks containing $x$, exactly $w$ of them
intersect $W$.
\end{proof}

{\sc Remark.}
Equality in (\ref{ineq-h}) holds if and only if every block intersects the
hole and has size $k$.

In the case of a single block size, $K=\{k\}$, the authors and Alan C.H.~Ling settled in \cite{DLLholes} the
existence of incomplete designs in two cases: (1) fixed $w$ and large $v$; and (2) $v/w$ bounded away 
from $k-1$ for large $v$ and $w$.
Taken together, these results can be summarized as follows.

\begin{thm}[\cite{DLLholes}]
\label{ipbdk}
Let $k$ be an integer, $k \ge 2$, and $\epsilon>0$.  For some $v_0=v_0(k,\epsilon)$,
an IPBD$((v;w),\{k\})$ exists for all $v \ge v_0$ and $w$ satisfying 
the divisibility conditions and $v > (k-1+\epsilon) w$.
\end{thm}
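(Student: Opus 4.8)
The plan is to realize the IPBD as a $\{k\}$-group divisible design (GDD) carrying a distinguished hole-group, and to produce it by inflating small seed designs with transversal designs and then filling. An IPBD$((v;w),\{k\})$ is precisely a $\{k\}$-GDD of type $1^{\,v-w}w^1$, that is, a $\{k\}$-GDD with $v-w$ singleton groups and one group of size $w$ whose internal edges form the hole and are left uncovered. So the entire task is to build, for the admissible pairs $(v,w)$ in range, a $\{k\}$-GDD with exactly one large group of the prescribed size $w$.

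The engine is Wilson's fundamental construction applied with a seed IPBD as the master GDD. Given a seed IPBD$((u;w_0),\{k\})$, regarded as a $\{k\}$-GDD of type $1^{\,u-w_0}w_0^1$, I would assign weights to its points and substitute transversal designs on its blocks. Two weightings matter. Weighting every non-hole point by $m$ and every hole point by $1$ calls for a TD$(k,m)$ on hole-free blocks and a $\{k\}$-GDD of type $m^{k-1}1^1$ on blocks meeting the hole; after filling the resulting size-$m$ groups with a BIBD$(m,k)$ it yields IPBD$((m(u-w_0)+w_0;\,w_0),\{k\})$, which enlarges $v$ while fixing $w$ and hence raises the ratio. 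Weighting every point by $m$ needs only a TD$(k,m)$ and, after the same filling, yields IPBD$((mu;\,mw_0),\{k\})$, scaling both parameters by $m$ and preserving the ratio $v/w=u/w_0$. The transversal designs (equivalently $k-2$ mutually orthogonal latin squares of order $m$), the truncated variant, and the filling BIBD$(m,k)$ all exist for every admissible $m$ beyond a bound depending only on $k$, by the MOLS asymptotics and Theorem~\ref{asymptotic-bd}.

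It remains to supply seeds and to organise the two regimes. For $w$ below a threshold $W_0=W_0(k,\epsilon)$, Theorem~\ref{fixed} already provides IPBD$((v;w),\{k\})$ for all large $v$, and since there are finitely many admissible $w\le W_0$ the maximum of the associated thresholds handles this case; the ratio bound is automatic here because $w$ is bounded. For $w>W_0$ I would realize the target hole by applying the ratio-preserving inflation to a seed with a smaller hole, and then reach the exact value of $v$ by combining this with the non-hole inflation and a suitable choice of seed. The role of the hypothesis $v>(k-1+\epsilon)w$ is already visible in Proposition~\ref{holesize-bound}: each non-hole point lies in exactly $w$ blocks meeting the hole out of at most $\tfrac{v-1}{k-1}$ blocks through it, forcing the inequality $(\ref{ineq-h})$, and the slack $\epsilon$ keeps these replication numbers bounded away from equality so that the weightings and fillings have room to exist. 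At the boundary $v=(k-1)w+1$ one is pinned to the rigid configuration of the Remark after Proposition~\ref{holesize-bound}.

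The step I expect to be the main obstacle is exact parameter control: scaling by $m$ and the two inflations reach only a restricted family of pairs $(v,w)$, so the delicate part is arranging seeds and correction factors to cover every admissible $(v,w)$ with $v>(k-1+\epsilon)w$ once $v$ is large — realizing every large admissible $w$ as a hole, then every admissible $v$ above the ratio threshold, while respecting the divisibility conditions $(\ref{local})$ and $(\ref{global})$ on the whole design and on each filled group simultaneously. Equivalently, one must show that the finitely many residue classes of $(v,w)$ can each be seeded and that a single threshold $v_0(k,\epsilon)$ works uniformly across them; this number-theoretic bookkeeping, rather than any individual construction, is where the care lies.
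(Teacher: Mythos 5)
Your reduction of an IPBD$((v;w),\{k\})$ to a $\{k\}$-GDD of type $1^{v-w}w^1$, and your ratio-preserving inflation (weight every point by $m$, replace blocks by TD$(k,m)$, fill the size-$m$ groups with a BIBD$(m,k)$), are both sound. But your other move --- weighting non-hole points by $m$ and hole points by $1$ --- is broken: on a block meeting the hole, Wilson's Fundamental Construction demands an ingredient $\{k\}$-GDD of type $m^{k-1}1^1$, and for $k\ge 3$ no such design exists unless $m=1$. Indeed, that GDD has exactly $k$ groups, so every block (size $k$, at most one point per group) must contain the point $x$ of the singleton group; then $x$ lies in exactly $m$ blocks, and those blocks cover only $m\binom{k-1}{2}$ of the $m^2\binom{k-1}{2}$ cross-pairs not involving $x$, forcing $m^2=m$. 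So the claimed output IPBD$((m(u-w_0)+w_0;\,w_0),\{k\})$ does not follow from WFC as described; enlarging $v$ while holding the hole fixed is genuinely hard --- it is the content of Theorem~\ref{fixed} (the main result of \cite{DvB}), not a consequence of a single weighting.

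Second, and more fundamentally, the step you defer as ``number-theoretic bookkeeping'' is the theorem itself, and the toolkit you set up cannot carry it. The ratio-preserving inflation only reproduces ratios you already possess: to reach a pair $(v,w)$ with $w$ large and $v/w$ close to $k-1+\epsilon$ you would need a seed $(u;w_0)$ with $u/w_0=v/w$ and $(v,w)=(mu,mw_0)$ for an admissible $m$. But Theorem~\ref{fixed} gives no control on how its threshold $v_0(w_0)$ compares to $(k-1+\epsilon)w_0$, so seeds of near-extremal ratio need not be available (their existence is precisely what is being proved), and even when the ratio matches, $m$ must divide $w$ compatibly with BIBD/TD admissibility --- already impossible when $w$ is a large prime not dividing $v$. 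The paper's argument (for the generalization, Theorem~\ref{main}, following the same strategy as \cite{DLLholes}) is built exactly to close these two gaps: designs with ratio approaching the bound are manufactured directly from resolvable (for general $K$, class-uniformly resolvable) designs by extending parallel classes into new hole points (Section 3 and Lemma~\ref{xstar}); all admissible congruence classes of both parameters are generated via incomplete GDDs and filling (Section 4, Proposition~\ref{fixed-arb} and Theorem~\ref{fixed-hole}); and the fixed-hole and large-hole ingredients are glued by the truncated-transversal-design recursion of Lemma~\ref{ABC}, writing $v-w=\alpha(tA+B)$ and $w=\alpha(C+z)+1$. Your outline correctly locates the difficulty but does not resolve it, and one of the two constructions it does rely on is invalid.
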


%It seems that Theorem~\ref{ipbdk} also follows from the recent work in \cite{GKLO,Keevash}.

Our aim in this paper is to generalize Theorem~\ref{ipbdk} to the general case of mixed block sizes in $K$.
There are some technical challenges here, since the divisibility conditions
weaken in general and yet the necessary inequality Proposition~\ref{holesize-bound} remains in terms of $k = \min K$
alone.

We state our main result.

\begin{thm}
\label{main}
Let $K \subseteq \Z_{\ge 2}$ with $k=\min K$ and let $\epsilon>0$.  For some $v_0=v_0(K,\epsilon)$,
an IPBD$((v;w),K)$ exists for all $v \ge v_0$ and $w$ satisfying 
$(\ref{local})$, $(\ref{global})$ and $v > (k-1+\epsilon) w$.
\end{thm}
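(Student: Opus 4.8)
The plan is to reduce the construction to a single backbone object --- a $K$-GDD with one very large group that will serve as the hole --- and then to fill the remaining groups with ordinary pairwise balanced designs supplied by Theorem~\ref{asymptotic-bd}. Throughout, the single block-size result Theorem~\ref{ipbdk} governs the \emph{scale} of the construction (the large hole, and hence the ratio hypothesis), while the extra freedom in the divisibility conditions for general $K$ is absorbed by a bounded amount of block-size variety drawn from $K\setminus\{k\}$. The guiding principle is that, since $\alpha(K)\mid k-1$ and $\beta(K)\mid k(k-1)$, the conditions $(\ref{local})$ and $(\ref{global})$ for $K$ are genuinely coarser than those for $\{k\}$: a pair $(v,w)$ may satisfy the former without satisfying $v\equiv w\pmod{k-1}$, so blocks of size $k$ alone cannot meet the local condition and some larger blocks from $K$ are forced.

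The filling step is the routine direction. Fix an admissible filling size $m=m(K)$ --- large, with $m\equiv 1\pmod{\alpha(K)}$ and $m(m-1)\equiv 0\pmod{\beta(K)}$ --- so that a PBD$(m,K)$ exists by Theorem~\ref{asymptotic-bd}; more generally I would allow several such sizes $m_1,\dots,m_t$ with $\sum_j m_j=v-w$, which is where the exact value of $v$ is matched. Given a $K$-GDD of type $w^1 m_1\cdots m_t$, designate the $w$-group as the hole and fill each $m_j$-group with a PBD$(m_j,K)$; the GDD covers precisely the between-group pairs, the fillings cover the within-$m_j$-group pairs, and the within-$w$-group pairs are left uncovered, yielding an IPBD$((v;w),K)$. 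A short bookkeeping check, using $\binom{v}{2}-\binom{w}{2}-\sum_j\binom{m_j}{2}$ for the between-group pair count, shows that the divisibility conditions for this GDD are exactly $(\ref{local})$ and $(\ref{global})$ for $(v,w)$ together with the admissibility of the $m_j$, all of which we have arranged.

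It remains to build the backbone GDD, and this is where the real work and the hypotheses reside. Regarding the $w$-group as a hole, the backbone obeys an inequality of the same shape as Proposition~\ref{holesize-bound}: a point of an $m_j$-group lies in at most $\frac{v-m_j}{k-1}$ blocks, each meeting the $w$-group at most once, so the backbone can exist only when $v\ge(k-1)w+\min_j m_j$, that is, essentially when $v>(k-1)w$; the hypothesis $v>(k-1+\epsilon)w$ supplies the slack needed to realize it for all large $v$ uniformly in $w$. To construct it I would use Wilson's Fundamental Construction with a single block-size master --- for instance an IPBD$((v_0;w_0),\{k\})$, which by Theorem~\ref{ipbdk} exists with a large hole of any $\{k\}$-admissible size $w_0$ near $w$ --- weighting the non-hole points uniformly by an admissible $g$ and the hole points by $1$, and substituting transversal designs of block size $k$ for the master blocks. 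This already yields a $K$-GDD of type $w_0^{\,1} g^{\,v_0-w_0}$ at the correct scale and ratio. The discrepancy $w-w_0$, together with any residue of $(v,w)$ unreachable on the $\{k\}$-lattice, is then corrected by replacing a bounded number of ingredients adjacent to the hole with small $K$-GDDs having blocks of sizes in $K\setminus\{k\}$, thereby enlarging the hole to the exact size $w$ and shifting the local and global residues onto the full $K$-lattice permitted by $(\ref{local})$ and $(\ref{global})$.

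The main obstacle is precisely this last point: bridging the gap between the coarse divisibility governed by $\alpha(K)$ and $\beta(K)$ and the feasibility inequality governed by $k=\min K$ alone. I must show that every residue class of $(v,w)$ allowed by $(\ref{local})$ and $(\ref{global})$ can be realized while introducing only $O_K(1)$ blocks of size exceeding $k$, so that the size-$k$ part of the design --- which is all that the ratio bound constrains --- still fits within $v>(k-1+\epsilon)w$. Carrying this out uniformly in $w$, rather than for a single fixed $w$ as in Theorem~\ref{fixed}, is the crux of the argument, and it is the place where passing from $\{k\}$ to a general set $K$ demands genuinely new input beyond a direct appeal to Theorem~\ref{ipbdk}.
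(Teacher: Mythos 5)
Your skeleton (a backbone $K$-GDD whose one large group becomes the hole, filled with PBDs from Theorem~\ref{asymptotic-bd}) matches the paper's final assembly, but the construction of the backbone---which is the entire content of the theorem---fails at two concrete points. First, the weighting step is broken. Viewing the master IPBD$((v_0;w_0),\{k\})$ as a $\{k\}$-GDD of type $1^{v_0-w_0}w_0^1$ and assigning weight $g$ to non-hole points and weight $1$ to hole points, Wilson's Fundamental Construction demands, for each master block meeting the hole, an ingredient $K$-GDD of type $g^{k-1}1^1$. No such design exists for $g>1$: every block size is at least $k=\min K$ while there are only $k$ groups, so every block would have to meet all groups and hence contain the singleton point $x$; the blocks through $x$ then number exactly $g$ and cover only $g\binom{k-1}{2}$ of the $g^2\binom{k-1}{2}$ cross-pairs among the size-$g$ groups. (Inflating with uniform weight $g$ repairs this locally, but yields type $g^{v_0-w_0}(gw_0)^1$ and leaves the congruence problem below untouched.)

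Second, and more fundamentally, your plan to fix the coarse divisibility ``by replacing a bounded number of ingredients adjacent to the hole,'' with only $O_K(1)$ blocks of size exceeding $k$, is impossible rather than merely unproven. Take $K=\{k,l\}$ with $\gcd(k-1,l-1)=1$, so that $(\ref{local})$ is vacuous, and suppose $v\not\equiv 1\pmod{k-1}$. For any point $x\notin W$, the blocks through $x$ partition $V\setminus\{x\}$ into parts of sizes $|B|-1\in\{k-1,l-1\}$; if all were $k-1$ we would have $v\equiv 1\pmod{k-1}$. Hence \emph{every} point outside the hole lies on at least one block of size $l$, forcing at least $(v-w)/l$ such blocks distributed across the whole design---they cannot be confined near the hole, and their number grows linearly in $v$. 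This is exactly why the paper does not argue this way: it manufactures class-uniformly resolvable GDDs from the graphs $G_{K,\theta}$ of Section~\ref{curds}, so that a small but positive \emph{proportion} $\theta$ of every parallel class consists of larger blocks, keeping the hole within $\epsilon$ of the extremal bound while realizing the coarse congruences; these are combined with the GDDs of type $\alpha^u w^1$ of Theorem~\ref{fixed-hole} (itself requiring the IGDD and Chinese-remainder machinery of Section 4) via Lemmas~\ref{xstar} and~\ref{ABC}, and only then filled with IPBDs from Theorem~\ref{fixed}. Your closing sentence concedes that the uniformity-in-$w$ step ``demands genuinely new input beyond a direct appeal to Theorem~\ref{ipbdk}''; that assessment is correct, and since the proposal does not supply that input---and the shortcut it suggests is provably unavailable---the proof is incomplete at its crux.
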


We note that eliminating $\epsilon$ is not in general possible without knowing more about the set $K$.  That is, for each positive integer $k$ and constant $C>0$, there exists a set of the form $K=\{k,l\}$ such that infinitely many admissible pairs $(v,w)$ satisfying $v \le (k-1)w+C$ fail to admit IPBD$((v;w),K)$.  To see this, it suffices to choose $l-1$ equal to a large prime so that $1=\alpha(\{k,l\}) < \alpha(\{k\})$.  Then, since $v-1$ equals a sum of $w$ terms from $\{k-1,l-1\}$, we are forced to take $v \ge 1+(k-1)w+(l-k)$ whenever $v \not\equiv 1 \pmod{k-1}$.

As a direct consequence of Wilson's theorem and our main result, we can obtain an existence result on PBDs with subdesigns.

\begin{cor}
Let $K \subseteq \Z_{\ge 2}$ with $k=\min K$ and let $\epsilon>0$.  For some $v_0=v_0(K,\epsilon)$,
there exists a PBD$(v,K)$ with a subdesign PBD$(w,K)$ for all $v \ge v_0$ and $w$ satisfying 
$(\ref{local})$, $v(v-1) \equiv w(w-1) \equiv 0 \pmod{\beta(K)}$, and $v > (k-1+\epsilon) w$.
\end{cor}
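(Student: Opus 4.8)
The plan is to realize the desired PBD as the result of \emph{filling the hole} of an incomplete design, combining Theorem~\ref{main} with Wilson's Theorem~\ref{asymptotic-bd}. Given $v$ and $w$ as in the hypotheses, I would first note that subtracting the two assumed congruences $v(v-1) \equiv 0$ and $w(w-1) \equiv 0 \pmod{\beta(K)}$ gives $v(v-1) - w(w-1) \equiv 0 \pmod{\beta(K)}$, which is exactly $(\ref{global})$; together with the assumed $(\ref{local})$ and the inequality $v > (k-1+\epsilon)w$, this is precisely the input required by Theorem~\ref{main}. Hence, for $v$ at least the threshold $v_0(K,\epsilon)$ supplied by that theorem, there is an IPBD$((v;w),K)$, say $(V,W,\cB)$, in which $W$ is a hole of size $w$ carrying no blocks.

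Next I would fill the hole. The hypotheses $w-1 \equiv 0 \pmod{\alpha(K)}$ and $w(w-1) \equiv 0 \pmod{\beta(K)}$ are exactly the divisibility conditions $(\ref{local-pbd})$ and $(\ref{global-pbd})$ for a PBD$(w,K)$ on the point set $W$. Provided $w$ exceeds Wilson's threshold $w_1 = w_1(K)$, Theorem~\ref{asymptotic-bd} furnishes such a design $(W,\cA)$. The blocks of $\cA$ cover every pair inside $W$ exactly once, while the blocks of $\cB$ cover every remaining pair exactly once; thus $(V,\cB \cup \cA)$ is a PBD$(v,K)$, and $(W,\cA)$ sits inside it as a subdesign PBD$(w,K)$, as required.

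The one point needing care is that $v > (k-1+\epsilon)w$ bounds $w$ from above but not from below, so $w$ may be too small for Theorem~\ref{asymptotic-bd} to supply a PBD$(w,K)$ directly; I expect this to be the main, if modest, obstacle. Below $w_1$ there are only finitely many admissible $w$, and each is a single fixed value. For each such $w$ that admits a PBD$(w,K)$ at all, I would fix one such design and invoke Theorem~\ref{fixed} to obtain an IPBD$((v;w),K)$ for all large $v$ (its hypothesis $w \equiv 1 \pmod{\alpha(K)}$ is guaranteed by $(\ref{local})$), then fill exactly as above. Absorbing the finitely many thresholds arising this way into $v_0$ yields a single $v_0 = v_0(K,\epsilon)$ valid across all admissible $w$, completing the argument.
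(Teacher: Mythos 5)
Your proposal is correct and is exactly the argument the paper intends: the corollary is stated there without proof as a direct consequence of Theorem~\ref{main} (whose hypothesis (\ref{global}) follows, as you note, by subtracting the two assumed congruences) together with Theorem~\ref{asymptotic-bd} used to fill the hole of the resulting IPBD$((v;w),K)$. Your closing caveat is well taken and in fact exposes a slight imprecision in the corollary as literally stated: a subdesign PBD$(w,K)$ cannot exist unless a PBD$(w,K)$ itself exists, which can fail for finitely many small admissible $w$ (e.g.\ $K=\{6\}$, $w=16$, excluded by Fisher's inequality), so the claim must be read as restricted to those $w$ admitting a PBD$(w,K)$ --- precisely the restriction you impose (and for those small $w$, Theorem~\ref{main} already supplies the needed IPBD directly, so your appeal to Theorem~\ref{fixed} there is sound but not necessary).
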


Techniques in recent papers,\cite{GKLO,Keevash,Keevash2}, provide 
an alternative approach to Theorem~\ref{ipbdk}. However, additional work is required both to 
set up the problem and obtain a bound. 
%Similarly, it is likely that even more recent methods in, say,
%either of \cite{GKLO} or \cite{Keevash2}, could  be applied to the more general 
%problem, Theorem~\ref{main}, although considerable work may be required to properly set 
%up the problem for this framework and again obtain a good bound. 
Our work to follow offers the advantage of an explicit (though technically complicated) construction.  That is, our methods can in
principle directly build specific designs for small $v$ and $w$ as structured combinations of just
a few necessary constituent designs.  The latter are often easy to find in practice through finite-geometric or computer-generated constructions.

In this direction, let us briefly examine some specific sets $K$ to illustrate that a (very nearly) complete existence theory for IPBD$((v;w),K)$ can sometimes be expected without the need for extremely large or random constructions.

An IPBD$((v;w),1+2\Z)$ is known to exist for all odd positive integers $v$ and $w$ satisfying the necessary inequality $v \ge 2w+1$; see \cite{CHL}.  The proof uses one-factorizations of circulant graphs and some specific small designs.  Since there exists a PBD$(v,\{3,5\})$ for all odd positive integers $v$, a folklore result that can be found in, e.g., \cite{CR}, it follows by `breaking up blocks' that there exists an  IPBD$((v;w),\{3,5\})$ under the same conditions.

Similarly, in \cite{CCGL,HH}, it was shown that an IPBD$((v;w),\Z_{\ge 3})$ exists for all integers $v$ and $w$ satisfying $v \ge 2w+3$, with the exception of 7 pairs $(v,w) \in \{(7,2),(8,2),(9,2),(10,2),(11,4),(12,2),(13,2)\}$.  
It is possible again to break up blocks and obtain a result in the case $K=\{3,4,5,6,8\}$.  With perhaps a slight enlargement of the set of exceptions, it is possible to handle the case $K=\{3,4,5\}$.  Some work on the exceptions was done in the undergrad research project of Songfeng Wu at the University of Victoria.

\subsection{Outline}

Our proof of Theorem~\ref{main} is similar in nature to that of
Theorem~\ref{ipbdk} in \cite{DLLholes}.  A key difference is that we work
with group divisible designs having groups of size $\alpha(K)$ in place of
$k-1$.  In our present situation of mixed block sizes, these are more
general objects, not directly corresponding to IPBDs.

The organization of the paper is as follows.
We begin by proving a relaxation in which $w$ is arbitrary but fixed, $v$
is large, and $v-w$ is divisible by a large multiple of $\beta=\beta(K)$.  This is done in Section 2.  
Then, using resolvable and class-uniformly resolvable block designs, we construct two key
families of designs: one in which $v-w$ is arbitrary mod $\beta$ and the other in which $v/w$ approaches $k-1$.  
This is in Section 3.  Next, we give a construction which hits any prescribed
admissible congruence classes for $v$ and $w$.  This primarily number-theoretic work occurs in Section 4.  To complete the
proof of Theorem~\ref{main}, constructions based on transversal designs combine designs with fixed $w$ and large $w$  (near $v/(k-1)$) .  This `analytic' 
side of the argument occurs in Section 5.  We then apply our methods in Section 6 to get existence results for various related
designs, including IPBDs with general index $\lambda$ and mutually orthogonal incomplete latin squares.  A longer argument generalizing our main result to certain group divisible designs is given in Section 7. In both Sections 6 and 7, we include several other applications that 
illustrate the power of using IPBDs as `templates'. These applications include the construction of designs 
with subdesigns and covering and packing problems.

\section{Background on group divisible designs}

Let $T$ denote an integer partition of $v$.
A \emph{group divisible design} of \emph{type} $T$ with block sizes in $K$,
denoted by GDD$(T,K)$ or as a $K$-GDD of type $T$,  is a triple $(V,\Pi,\cB)$ such that
\begin{itemize}
\item
$V$ is a set of $v$ points;
\item
$\Pi=\{V_1,\dots,V_u\}$ is a partition of $V$ into \emph{groups} so that
$T=(|V_1|,\dots,|V_u|)$;
\item
$\cB \subseteq \cup_{k \in K} \binom{V}{k}$ is a set of blocks meeting each
group in at most one point; and
\item
any two points from different groups appear together in exactly one
block.
\end{itemize}

Often in this context, exponential notation such as $n^u$ is used to
abbreviate $u$ parts or `groups' of size $n$.  It is also convenient to drop the 
brackets for a single block size and write $k$ instead of $\{k\}$.
For instance, a
\emph{transversal design} TD$(k,n)$ is a GDD$(n^k,k)$ or a $k$-GDD of type $n^k$. 
In this case, the blocks are transversals of the partition.  A TD$(k,n)$ is equivalent to
$k-2$ mutually orthogonal latin squares of order $n$, where two groups are
reserved to index the rows and columns of the squares.

\begin{thm}[Chowla, Erd\H{o}s and Strauss, \cite{CES}]
\label{asymptotic-td}
Given $k$, there exist TD$(k,n)$ for all sufficiently large integers $n$.
\end{thm}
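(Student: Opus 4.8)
The plan is to assemble large transversal designs from prime-power ones by closing under pairwise balanced designs, using Wilson's Theorem~\ref{asymptotic-bd} to supply the templates. Recall that a TD$(k,n)$ is the same thing as $k-2$ mutually orthogonal latin squares of order $n$, so write $N(n)$ for the maximum number of such squares; it suffices to show $N(n)\to\infty$. First I would record the base case coming from finite fields: for a prime power $q$ the latin squares $L_a(x,y)=ax+y$ over $\mathbb{F}_q$, indexed by the nonzero $a$, are mutually orthogonal, so $N(q)=q-1$ and hence TD$(k,q)$ exists for every prime power $q\ge k-1$. I will in fact use prime powers $q\ge k$, so that $N(q)\ge k-1$ leaves one square to spare.

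Next I would invoke the closure of the transversal-design spectrum under pairwise balanced designs. This is the standard Wilson--MacNeish construction: if a PBD$(n,K')$ exists and TD$(k,k')$ exists for every block size $k'\in K'$, then TD$(k,n)$ exists; equivalently $N(n)\ge\min_{k'\in K'}N(k')$ for such an $n$. Taking $K'$ to consist of prime powers at least $k$, each ingredient satisfies $N(k')\ge k-1$, so even allowing for a possible loss of one in the closure step we obtain $N(n)\ge k-2$, that is, a TD$(k,n)$.

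It remains to choose $K'$ so that Theorem~\ref{asymptotic-bd} produces the needed templates for \emph{all} large $n$, with no residual congruence restriction. Here I would exploit that every $k(k-1)$ is even, so $\beta(K')\ge 2$ always and $v(v-1)$ is automatically divisible by $2$; hence it suffices to arrange $\alpha(K')=1$ and $\beta(K')=2$, which makes both divisibility conditions $(\ref{local-pbd})$ and $(\ref{global-pbd})$ vacuous. Such a $K'$ exists: take a power of two $q_1=2^a\ge k$, and by Dirichlet's theorem a prime $q_2=p\ge k$ with $p\equiv 3\pmod 4$ and $p\equiv 2\pmod{\ell}$ for every odd prime $\ell\mid 2^a-1$. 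Then $\gcd(q_1-1,q_2-1)=1$ gives $\alpha(K')=1$, while $\gcd\bigl(q_1(q_1-1),\,q_2(q_2-1)\bigr)=2$ gives $\beta(K')=2$. With this $K'$, Theorem~\ref{asymptotic-bd} yields PBD$(n,K')$ for all sufficiently large $n$, and the closure step of the previous paragraph then delivers TD$(k,n)$.

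The main obstacle is the closure construction itself, namely transferring mutually orthogonal latin squares through the overlapping block structure of a PBD; this is the genuine combinatorial content and is what I would develop (or cite) in full, taking care of the exact constant so that the prime powers $\ge k$ leave the required margin. By contrast the number-theoretic choice of $K'$ is routine once one notices that $\beta\ge 2$ is the true floor. As a self-contained alternative that avoids Wilson's theorem, one could follow Chowla, Erd\H{o}s and Strauss directly: combine the prime-power base case with MacNeish's product bound $N(n)\ge\min_i\bigl(p_i^{e_i}\bigr)-1$ and a gap-filling (singular direct product) construction, the growth then coming from prime-gap and sieve estimates that place a usable prime power in a short interval below $n$.
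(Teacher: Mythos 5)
The paper offers no proof of Theorem~\ref{asymptotic-td} at all: it is quoted directly from Chowla, Erd\H{o}s and Strauss \cite{CES}, so there is no internal argument to compare yours against. Taken on its own terms, your derivation is correct. The number theory checks out: with $q_1=2^a\ge k$ and a prime $p\ge k$ chosen via Dirichlet so that $p\equiv 3\pmod 4$ and $p\equiv 2\pmod{\ell}$ for each odd prime $\ell\mid 2^a-1$, one gets $\gcd(2^a-1,p-1)=1$ and $\gcd\bigl(2^a(2^a-1),p(p-1)\bigr)=2$, so conditions (\ref{local-pbd}) and (\ref{global-pbd}) become vacuous and Theorem~\ref{asymptotic-bd} supplies PBD$(n,K')$ for all large $n$. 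Your handling of the closure step is also right: the clean statement is that the set of orders admitting $t$ \emph{idempotent} MOLS is PBD-closed (Wilson \cite{ConsUses}), and a prime power $q\ge k$ carries $q-2\ge k-2$ idempotent MOLS, so the ``loss of one'' margin you build in is exactly what is needed. Indeed this is the same mechanism the paper itself uses in Section 6.2, where idempotent MOLS are placed on the blocks of a template IPBD.

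The one serious caveat is that, as a proof of the Chowla--Erd\H{o}s--Strauss theorem, your main route is circular at the level of the literature: Wilson's Theorem~\ref{asymptotic-bd} is not independent of Theorem~\ref{asymptotic-td}, since its proof in \cite{RMW1,RMW2} uses the asymptotic existence of transversal designs as a basic ingredient. So what you have actually shown is that, within this paper's framework where Wilson's theorem is taken as given, Theorem~\ref{asymptotic-td} is redundant; you have not given an independent proof of the 1960 result. The only genuinely self-contained route is the one you sketch in your final paragraph --- the field construction $N(q)=q-1$, MacNeish's product bound, and sieve/prime-gap estimates to place usable prime powers below $n$ --- which is the actual argument of \cite{CES} and which, unlike the Wilson route, yields effective bounds of the form $N(n)\ge n^{c}$ (compare Beth's $n^{1/14.8}$ cited in the paper). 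If you intend the Wilson-based derivation, state explicitly that it is a deduction from the paper's cited results rather than a proof of the original theorem; otherwise, the sieve argument is the one to develop.
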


A group divisible design of type $T=g^u$ is called \emph{uniform}.  Suppose
that, instead of increasing the group size as is done in
Theorem~\ref{asymptotic-td}, we are interested in many groups of a fixed
size.  There is a satisfactory existence result for this situation, which we
state for later use.

\begin{thm}[Draganova, \cite{Draganova}, and Liu, \cite{Liu}]
\label{asym-gdd}
Given $g$ and $K \subseteq \Z_{\ge 2}$, there exist GDD$(g^u,K)$ for all
sufficiently large $u$ satisfying
\begin{eqnarray}
\label{local-gdd}
g(u-1) &\equiv& 0 \pmod{\alpha} ~\text{and}\\
\label{global-gdd}
g^2 u(u-1) &\equiv& 0 \pmod{\beta}.
\end{eqnarray}
\end{thm}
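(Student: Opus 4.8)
The plan is to deduce the statement from Wilson's theorem for pairwise balanced designs (Theorem~\ref{asymptotic-bd}) and the transversal design existence theorem (Theorem~\ref{asymptotic-td}), the engine being the weighting form of Wilson's fundamental construction. First I would reduce to the case that $K$ is finite. The gcd defining $\alpha=\alpha(K)$ is already attained on a finite subset of $K$, and likewise for $\beta=\beta(K)$; taking the union of two such finite subsets yields a finite $K'\subseteq K$ with $\alpha(K')=\alpha$ and $\beta(K')=\beta$. Since any $K'$-GDD is in particular a $K$-GDD and the conditions $(\ref{local-gdd})$, $(\ref{global-gdd})$ depend only on $\alpha$ and $\beta$, it suffices to treat finite $K$; write $M=\max K$. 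It is also convenient to rewrite the hypotheses: $(\ref{local-gdd})$ is equivalent to $u\equiv 1\pmod{a}$ with $a:=\alpha/\gcd(\alpha,g)$, and $(\ref{global-gdd})$ to $u(u-1)\equiv 0\pmod{b}$ with $b:=\beta/\gcd(\beta,g^2)$.

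The engine is the following instance of the fundamental construction. Take a master PBD$(u,L)$, regard it as an $L$-GDD of type $1^u$, assign every point weight $g$, and on each block of size $\ell\in L$ install an \emph{ingredient} $K$-GDD of type $g^{\ell}$ whose groups are the $\ell$ inflated points. Because the ingredient groups align with the inflated points, each cross-group pair of the final design lies in the unique master block joining the two underlying points and is covered exactly once there, while pairs inside a single inflated point are never covered; the result is a $K$-GDD of type $g^{u}$. Thus it suffices, for all admissible large $u$, to exhibit a block-size set $L$ such that (i) a PBD$(u,L)$ exists and (ii) a $K$-GDD of type $g^{\ell}$ exists for every $\ell\in L$.

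For (i) I would invoke Theorem~\ref{asymptotic-bd}, which supplies PBD$(u,L)$ for all large $u$ with $u\equiv 1\pmod{\alpha(L)}$ and $u(u-1)\equiv 0\pmod{\beta(L)}$. To make these conditions coincide with the target admissibility conditions $u\equiv 1\pmod{a}$ and $u(u-1)\equiv 0\pmod{b}$, I would choose $L$ so that $\alpha(L)=a$ and $\beta(L)=b$. Every $\ell\in L$ must then itself be a legal order for a type-$g^{\ell}$ GDD, so (ii) asks for a \emph{finite} family of ingredient GDDs realizing such an $L$. Here one uses transversal designs (Theorem~\ref{asymptotic-td}) to inflate a handful of small direct constructions, together with finite-geometric or difference constructions for the genuinely small cases, and a number-theoretic selection of finitely many admissible $\ell$ whose differences $\ell-1$ have gcd exactly $a$ and whose products $\ell(\ell-1)$ have gcd exactly $b$.

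The main obstacle is precisely step (ii). Because the divisibility conditions for a type-$g^u$ GDD are in general strictly weaker than Wilson's conditions for a PBD$(u,K)$ — the factor $g$ can absorb part of $\alpha$ and of $\beta$ — one cannot simply inflate a PBD$(u,K)$ by weight $g$; indeed, for small $g$ the transversal designs TD$(k,g)$ with $k$ near $M$ needed for such a direct inflation may not exist at all. This forces the extra layer of ingredient GDDs that carry the group size $g$ internally, and the real work is in assembling a finite generating family of such ingredients hitting every admissible residue of $u$ while pinning the gcds of $L$ to $a$ and $b$. Once this family is in hand, Theorem~\ref{asymptotic-bd} provides the master PBD and the fundamental construction completes the proof for all sufficiently large $u$.
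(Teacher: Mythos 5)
Your reduction framework is sound as far as it goes: passing to a finite $K'\subseteq K$ with the same $\alpha,\beta$, rewriting (\ref{local-gdd}) and (\ref{global-gdd}) as $u\equiv 1\pmod{a}$ and $u(u-1)\equiv 0\pmod{b}$ with $a=\alpha/\gcd(\alpha,g)$ and $b=\beta/\gcd(\beta,g^2)$, and applying Lemma~\ref{wfc-lemma} to a master PBD$(u,L)$ with constant weight $g$ and ingredient $K$-GDDs of type $g^\ell$ are all correct steps. But there is a genuine gap exactly where you write that ``the real work'' lies: you never establish the existence of a finite set $L$ such that (a) a $K$-GDD of type $g^\ell$ exists for every $\ell\in L$ and (b) the Wilson conditions for $L$ match the GDD admissibility conditions. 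This is not a technicality that can be dispatched by ``transversal designs plus finite-geometric or difference constructions.'' As you yourself observe, the necessary conditions for type $g^\ell$ are in general strictly weaker than the PBD conditions on $\ell$ --- that weakening is the entire content of the theorem --- so no inflation of PBDs by weight $g$ (even when the needed TD$(k,g)$ exist, which they may not for small $g$) can reach the extra residue classes; and for arbitrary $g$ and $K$ there is no elementary catalogue of ingredient GDDs in those classes. Your argument therefore reduces the theorem to a finite problem whose solution, uniformly in $g$ and $K$, is essentially as hard as the theorem itself, and that problem is left unsolved.

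For context: the paper does not prove this statement at all; it is quoted as background from Draganova's thesis and Liu's paper, and those proofs do not follow your route. They obtain existence directly from the Lamken--Wilson theory of decompositions of edge-colored complete graphs --- the same machinery this paper invokes for Theorem~\ref{igdd-exist} --- precisely because that theory bypasses the bootstrapping problem your step (ii) runs into: it needs only a fractional (rational, nonnegative) decomposition rather than a stock of explicit small ingredient designs. A smaller correction: even granting step (ii), pinning the gcds of $L$ to exactly $a$ and $b$ is impossible in general, since $\beta(L)$ is always even (each $\ell(\ell-1)$ is even) while $b$ may be odd; what you actually need, and what suffices because $u(u-1)$ is even for every $u$, is $\alpha(L)=a$ and $\beta(L)=\mathrm{lcm}(2,b)$. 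That adjustment is easy; the missing ingredient family is not.
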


Here, $\alpha=\alpha(K)$ and $\beta=\beta(K)$ are defined as in Section 1.  Let us also define $\gamma:=\beta/\alpha$, which is easily seen to be an integer.  Since $\gcd(\alpha,\gamma)=1$, it follows that (\ref{global-gdd}) can be stated with modulus $\gamma$.

One of the most useful recursive constructions for designs is due to Richard M. Wilson. 
We state it here for group divisible designs. 

\begin{lemma}[Wilson's Fundamental Construction, \cite{ConsUses}]
\label{wfc-lemma}
Suppose there exists a GDD $(V,\Pi,\cB)$, where $\Pi=\{V_1,\dots,V_u\}$.
Let $\omega:V \rightarrow \Z_{\ge 0}$, assigning nonnegative weights to each
point in such a way that for every $B \in \cB$ there exists a
GDD$([\omega(x) : x \in B],K)$.  Then there exists a GDD$(T,K)$, where
$$T=\left[\sum_{x \in V_1} \omega(x),\dots,\sum_{x \in V_u}
\omega(x)\right].$$
\end{lemma}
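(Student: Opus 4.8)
The plan is to prove this by the standard `inflation' argument: each point of the master GDD is replaced by a set of copies, or \emph{fiber}, whose cardinality is the assigned weight, and the blocks of the new design are assembled from the ingredient GDDs guaranteed on each master block. Concretely, fix pairwise disjoint sets $S_x$ with $|S_x|=\omega(x)$ for each $x \in V$, and set $V^* = \bigcup_{x \in V} S_x$. Take the new partition to be $\Pi^* = \{V_1^*,\dots,V_u^*\}$, where $V_i^* = \bigcup_{x \in V_i} S_x$. Then $|V_i^*| = \sum_{x \in V_i} \omega(x)$, so the type of the resulting design is exactly $T$, as required. Fibers of weight $0$ contribute nothing and may be ignored.

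Next I would build the block set. By hypothesis, for each $B \in \cB$ there is a GDD$([\omega(x):x\in B],K)$; I would realize it on the point set $\bigcup_{x \in B} S_x$ with its groups taken to be the fibers $S_x$, $x \in B$. Let $\cB^*$ be the union, over all $B \in \cB$, of the block sets of these ingredient designs. Every block of $\cB^*$ has size in $K$, so it remains only to check the two GDD axioms. For the `at most one point per group' condition, observe that an ingredient block sits inside a single master block $B$ and meets each fiber $S_x$ in at most one point; since $B$ meets each master group $V_i$ in at most one point, the fibers of $B$ lying in $V_i^*$ all come from a single $x \in V_i \cap B$, so the block meets $V_i^*$ in at most one point.

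The hard part---really the only substantive point---is verifying that any two points in different new groups lie together in exactly one block of $\cB^*$. Take $a \in S_x \subseteq V_i^*$ and $b \in S_y \subseteq V_j^*$ with $i \ne j$. Then $x$ and $y$ lie in distinct master groups, so by the master GDD property there is a \emph{unique} block $B \in \cB$ containing both. Any ingredient design built on a block $B' \ne B$ contains at most one of the fibers $S_x, S_y$ (since $B'$ cannot contain both $x$ and $y$), hence none of its blocks contains both $a$ and $b$. Within the ingredient design on $B$ itself, $S_x$ and $S_y$ are distinct groups, so $a$ and $b$ appear together in exactly one ingredient block. Thus the pair $\{a,b\}$ is covered exactly once, and $(V^*,\Pi^*,\cB^*)$ is a GDD$(T,K)$. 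The argument is entirely a bookkeeping verification; the essential content is that `unique master block' combines with `unique ingredient block' to give `unique inflated block', with the group structure keeping the two layers from interfering.
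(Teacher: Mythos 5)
Your proof is correct and complete: the inflation argument---disjoint fibers $S_x$ of size $\omega(x)$, ingredient GDDs realized on each master block with the fibers as their groups, and the key pairing of ``unique master block containing $x,y$'' with ``unique ingredient block containing $a,b$''---is exactly the canonical proof of Wilson's Fundamental Construction. The paper itself states this lemma without proof, citing \cite{ConsUses}, so there is nothing to contrast; your argument is the standard one, and all the verifications (type, at most one point per group, exact coverage of cross-group pairs, and the harmless role of zero-weight fibers) are handled correctly.
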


The constructions to follow use group divisible designs which are uniform, except for one group of a different 
size $w$, i.e. so that $T=g^uw^1$.
Similar to the case of IPBDs, there is a necessary condition taking the form of an 
inequality on $g,u$ and $w$; this is discussed in Section~\ref{guh1}.
Our present focus is on the divisibility conditions.

\begin{prop}
\label{neccond-gdd-h}
The existence of a GDD$(g^uw^1,K)$  implies
\begin{eqnarray}
\label{local-gddw1}
gu & \equiv& 0 \pmod{\alpha}, \\
\label{local-gddw2}
w-g &\equiv &0 \pmod{\alpha}, ~\text{and}\\
\label{global-gddw}
gu(g(u-1) + 2w)  &\equiv& 0 \pmod{\beta}.
\end{eqnarray}
\end{prop}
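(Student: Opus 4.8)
The plan is to mirror the counting arguments used for Propositions~\ref{neccond} and~\ref{neccond-h}, adapted to the two kinds of points present in a design of type $g^uw^1$: the $gu$ points lying in groups of size $g$, and the $w$ points in the single group of size $w$. Throughout I write $N = gu+w$ for the total number of points and work with the block family $\cB$. Implicit in the type $g^uw^1$ is that $u \ge 1$ and $w \ge 1$, so that a point of each kind is available to fix.

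For the two local conditions I would fix a point and partition the points it \emph{sees} (those in other groups) according to the blocks through it. First, fixing a point $x$ in the group of size $w$: since $x$ meets every point outside its own group exactly once and each block through $x$ contributes $k-1$ such points for some $k \in K$, the number $N - w = gu$ of points seen by $x$ is an integer linear combination of the values $k-1$, $k \in K$, and is therefore divisible by $\alpha$. This is (\ref{local-gddw1}). Next, fixing a point $y$ in a group of size $g$, the same reasoning shows $N - g = g(u-1) + w$ is likewise such a combination, so $g(u-1)+w \equiv 0 \pmod{\alpha}$. Subtracting the first congruence and cancelling $gu$ yields $w - g \equiv 0 \pmod{\alpha}$, which is (\ref{local-gddw2}).

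For the global condition I would double-count pairs of points lying in distinct groups. Each such pair lies in exactly one block, and a block of size $k$ covers $\binom{k}{2}$ pairs, so the number $P$ of covered pairs satisfies $2P = \sum_{B \in \cB} |B|(|B|-1)$, an integer linear combination of the products $k(k-1)$, $k \in K$, whence $2P \equiv 0 \pmod{\beta}$. On the other hand $P$ is the total number of pairs minus those falling within a group,
$$P = \binom{N}{2} - u\binom{g}{2} - \binom{w}{2},$$
and a short expansion of $2P = N(N-1) - ug(g-1) - w(w-1)$ shows that the $w^2$, $w$, and $gu$ terms cancel, collapsing it to $2P = gu\bigl(g(u-1) + 2w\bigr)$. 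This gives (\ref{global-gddw}).

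The steps are routine; the only points requiring care are bookkeeping the two point types correctly, so that the correct seen-counts $gu$ and $g(u-1)+w$ are produced, and carrying out the algebraic simplification of $2P$ so that it reduces to the stated factored form. I do not anticipate any genuine obstacle beyond these.
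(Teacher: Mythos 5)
Your proposal is correct and takes essentially the same approach as the paper: the paper phrases the local conditions as divisibility of vertex degrees in the complete multipartite graph (identical to your ``points seen'' count, with (\ref{local-gddw2}) likewise obtained by subtraction), and the global condition as $\beta \mid 2|E(G)|$, which is exactly your cross-pair count $2P$ (the paper computes the edge count directly rather than by complementation, but the algebra lands on the same factored form $gu(g(u-1)+2w)$).
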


\begin{proof}
Such a GDD is equivalent to a decomposition of the complete multipartite graph, call it $G$, having exactly $u$ parts of size $g$ and one part of size $w$, into cliques whose sizes belong to $K$.  The degree of a vertex in the part of size $w$ is $gu$, and the degree of other vertices is $g(u-1)+w$.  Each of these quantities is divisible by  $\alpha(K)$, proving (\ref{local-gddw1})
and (\ref{local-gddw2}).  And, since edges of $G$ are partitioned by blocks, $\beta(K)$ divides $2|E(G)| = g^2u(u-1) + 2gwu = gu(gu-g+2w)$; this gives (\ref{global-gddw})
\end{proof}

Of particular importance for us is the case $g=\alpha(K)$, we note 
that the divisibility conditions simplify somewhat in this case.

\begin{prop}
\label{neccond-gdda-h}
The existence of a GDD$(\alpha^uw^1,K)$  implies
\begin{eqnarray*}
\label{localgdda}
w &\equiv &0 \pmod{\alpha}, ~\text{and}\\
\label{globalgdda}
 u(\alpha(u-1) + 2w)  &\equiv& 0 \pmod{\gamma}.
\end{eqnarray*}
\end{prop}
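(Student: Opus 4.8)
The plan is to specialize the general necessary conditions of Proposition~\ref{neccond-gdd-h} to the case $g=\alpha$ and to simplify the resulting three congruences; no further combinatorial argument is needed, since a GDD$(\alpha^uw^1,K)$ is in particular a GDD$(g^uw^1,K)$ with $g=\alpha$. The only external fact I would use is the relation $\beta=\alpha\gamma$, with $\gamma$ an integer, recorded just after Theorem~\ref{asym-gdd}.

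Setting $g=\alpha$, the first local condition (\ref{local-gddw1}) becomes $\alpha u\equiv 0\pmod\alpha$, which is automatic and hence drops out, carrying no content. The second local condition (\ref{local-gddw2}) becomes $w-\alpha\equiv 0\pmod\alpha$, that is, $w\equiv 0\pmod\alpha$; this gives the first asserted congruence. For the global condition, substituting $g=\alpha$ into (\ref{global-gddw}) yields
$$\alpha u\bigl(\alpha(u-1)+2w\bigr)\equiv 0\pmod\beta.$$

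It remains only to rewrite this last congruence modulo $\gamma$. Writing $\beta=\alpha\gamma$, the displayed condition says that $\alpha\gamma$ divides $\alpha\,u(\alpha(u-1)+2w)$, and cancelling the factor $\alpha$ gives precisely $\gamma\mid u(\alpha(u-1)+2w)$, the second asserted congruence. The only point requiring a moment's care—the sole ``obstacle''—is this cancellation: one uses the elementary equivalence $\alpha\gamma\mid \alpha X\iff\gamma\mid X$ for the integer $X=u(\alpha(u-1)+2w)$, rather than naively dividing a congruence by a non-unit. Since $\gamma=\beta/\alpha$ is an integer and $\gcd(\alpha,\gamma)=1$, this is immediate and completes the plan.
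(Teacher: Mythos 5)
Your proposal is correct and is exactly the argument the paper intends: Proposition~\ref{neccond-gdda-h} is stated as the specialization of Proposition~\ref{neccond-gdd-h} to $g=\alpha$, with (\ref{local-gddw1}) becoming vacuous, (\ref{local-gddw2}) giving $w\equiv 0\pmod{\alpha}$, and (\ref{global-gddw}) reducing modulo $\beta=\alpha\gamma$ by cancelling $\alpha$. One minor remark: the cancellation $\alpha\gamma\mid\alpha X\iff\gamma\mid X$ holds for any nonzero $\alpha$, so the appeal to $\gcd(\alpha,\gamma)=1$ is not actually needed.
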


%We say integers $u$ and $w$ are \emph{admissible} if (\ref{localgdda}) and (\ref{globalgdda}) hold. 

Appealing to Theorem~\ref{asym-gdd}, let $u_{0}=u_{0}(K)$ be a constant such that there exists a 
GDD$(\alpha^u,K)$ for all $u \geq u_{0}$ with $\gamma \mid u$.
Wilson's Fundamental Construction is used to prove the following result.

\begin{lemma} [\cite{DLLholes, DvB}]
\label{sparse1}
Let  $m \geq u_{0}$  with $m\equiv 0\pmod{\gamma}$. There exists a 
constant $s_{0}$ such that for all integers $s \geq s_{0}$ satisfying 
$s\equiv 0\pmod{\alpha}$ and any integer $t$ satisfying 
$0\leq t \leq s$ and $t\equiv 0\pmod{\alpha}$, there exists a 
GDD$(s^mt^1,K)$. 
\end{lemma}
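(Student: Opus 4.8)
The plan is to realize the desired design as a single application of Wilson's Fundamental Construction (Lemma~\ref{wfc-lemma}) to a suitably truncated transversal design, using uniform GDDs with groups of size $\alpha$ as the ingredient designs. Write $s=\alpha s'$ and $t=\alpha t'$; the hypotheses $\alpha \mid s$ and $\alpha \mid t$ guarantee that $s'$ and $t'$ are integers with $0 \le t' \le s'$. Since $m$ is fixed, Theorem~\ref{asymptotic-td} furnishes a TD$(m+1,s')$ for all $s'$ exceeding some threshold $n_0=n_0(m)$; accordingly I would set $s_0=\alpha n_0$, so that $s \ge s_0$ makes this master design available.

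Next I would assign the weights. In the master TD$(m+1,s')$, give weight $\alpha$ to every point lying in the first $m$ groups, and in the last group choose an arbitrary set of $t'$ points to receive weight $\alpha$, the remaining $s'-t'$ points receiving weight $0$. Each of the first $m$ groups then has total weight $\alpha s' = s$, while the last group has total weight $\alpha t' = t$, so the output of Wilson's Fundamental Construction will be a GDD of type $s^m t^1$, exactly as required, provided the hypotheses on the ingredient designs are met. (The degenerate cases $t'=0$, where the last group is simply deleted to leave a weighted TD$(m,s')$, and $t'=s'$, where no truncation occurs, are subsumed.)

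The ingredient check is the crux of the argument, and the only place where the congruence $m \equiv 0 \pmod{\gamma}$ is used. Because every block of the master design is a transversal meeting each group in exactly one point, the multiset of \emph{nonzero} weights on a block is either $\alpha^{m+1}$, when the block meets the last group in a weight-$\alpha$ point, or $\alpha^{m}$, when it meets the last group in a weight-$0$ point (which contributes a vacuous group of size $0$). Thus Wilson's Fundamental Construction requires the existence of both GDD$(\alpha^{m+1},K)$ and GDD$(\alpha^{m},K)$. Specializing the divisibility conditions $(\ref{local-gdd})$ and $(\ref{global-gdd})$ of Theorem~\ref{asym-gdd} to $g=\alpha$, and using $\gcd(\alpha,\gamma)=1$, a uniform GDD$(\alpha^u,K)$ of large index exists as soon as $u(u-1) \equiv 0 \pmod{\gamma}$. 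Since $m \equiv 0 \pmod{\gamma}$ forces $m+1 \equiv 1 \pmod{\gamma}$, both $u=m$ and $u=m+1$ satisfy this congruence, so for $m \ge u_0$ (enlarging $u_0$ if necessary to serve as the threshold of Theorem~\ref{asym-gdd} for the residue class $1 \pmod{\gamma}$ as well) both ingredient designs exist.

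With both ingredients in hand, I would invoke Lemma~\ref{wfc-lemma} to assemble the pieces into the required GDD$(s^m t^1,K)$. The main obstacle is precisely the appearance of the block size $m+1$ among the ingredient demands: one might naively expect only GDD$(\alpha^m,K)$ to be available from the hypothesis $\gamma \mid m$, and the resolution is the observation above that $m \equiv 0 \pmod{\gamma}$ simultaneously places $m+1$ in the admissible residue class $1 \pmod{\gamma}$. Beyond this single point, the proof is routine bookkeeping of weights, sums, and divisibility.
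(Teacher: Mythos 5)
Your proof is correct and is essentially the intended argument: the paper cites this lemma to \cite{DLLholes, DvB} and says only that it follows from Wilson's Fundamental Construction, and the proof in those references is exactly your construction --- a TD$(m+1,s/\alpha)$ with the last group truncated (via weight $0$) to $t/\alpha$ points, all other weights equal to $\alpha$, with ingredient GDDs of types $\alpha^m$ and $\alpha^{m+1}$. Your handling of the one subtle point is also right: since $\gamma \mid m$ gives $(m+1)m \equiv 0 \pmod{\gamma}$, both ingredient types are admissible in Theorem~\ref{asym-gdd}, so one simply takes $u_0$ to be a threshold valid for all admissible residues rather than only for $u \equiv 0 \pmod{\gamma}$.
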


Now let $M_1=\gamma m$ where $m$ is as in Lemma~\ref{sparse1}.

\begin{prop}
\label{sparse-class1}
For any integer $w\equiv 0\pmod{\alpha}$, there exist GDD$(\alpha^uw^1,K)$  for 
all sufficiently large  $u\equiv 0\pmod{M_1}$.
\end{prop}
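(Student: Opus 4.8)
The goal of Proposition~\ref{sparse-class1} is to produce, for any fixed $w \equiv 0 \pmod{\alpha}$, a GDD$(\alpha^u w^1,K)$ for all large $u$ in the single residue class $u \equiv 0 \pmod{M_1}$, where $M_1 = \gamma m$. The plan is to obtain such designs by inflating the GDD$(s^m t^1,K)$ guaranteed by Lemma~\ref{sparse1} via Wilson's Fundamental Construction (Lemma~\ref{wfc-lemma}), choosing the weighting so that each uniform group of size $s$ breaks into many groups of size $\alpha$, while the single group of size $t$ is arranged to become the desired hole of size $w$.

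\textbf{Outline of the construction.} First I would fix $m$ as in Lemma~\ref{sparse1}, so $m \ge u_0$ and $\gamma \mid m$, and invoke that lemma to get a constant $s_0$. Take $s \ge s_0$ with $s \equiv 0 \pmod{\alpha}$ and set $t = w'$ for a suitable multiple of $\alpha$ with $0 \le t \le s$; this yields a master GDD$(s^m t^1,K)$ whose block sizes lie in $K$. Now apply Wilson's Fundamental Construction: assign weight $\alpha$ to every point of the $m$ uniform groups and an appropriate weight to the points of the group of size $t$. For the ingredient condition of Lemma~\ref{wfc-lemma}, I need, for each block $B$, a GDD$([\omega(x):x\in B],K)$; since every block of the master GDD has all its points in distinct groups, the weights along a block are either all equal to $\alpha$ (giving a uniform GDD$(\alpha^{|B|},K)$, which exists since $|B| = k' \in K$ trivially decomposes, or more robustly can be supplied by small uniform GDDs) or involve at most one differing weight coming from the hole group. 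The resulting design has $m$ groups, each of size $\alpha \cdot s$ split further into $s$ groups of size $\alpha$ — so $ms$ groups of size $\alpha$ in total — together with one group whose size aggregates to $w$.

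\textbf{Matching the target parameters.} The arithmetic to arrange is: the number of small groups $u = ms$ (or $ms$ plus a contribution from the hole, depending on how the hole is weighted) must run over all large values in the class $u \equiv 0 \pmod{M_1}$. Since $M_1 = \gamma m$ and $\gamma \mid m$, writing $u = m s$ gives $u \equiv 0 \pmod{m}$ automatically, and the further condition $\gamma \mid s$ (available from the freedom in choosing $s \equiv 0 \pmod \alpha$ together with the divisibility bookkeeping, since $\gcd(\alpha,\gamma)=1$ lets us also impose $\gamma \mid s$) forces $u \equiv 0 \pmod{\gamma m} = \pmod{M_1}$. As $s$ ranges over all sufficiently large admissible values in its class, $u = ms$ ranges over all sufficiently large multiples of $M_1$, which is exactly the conclusion. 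The hole of size $w$ is obtained by selecting $t$ and the weight on the hole group so their product equals $w$; because $w \equiv 0 \pmod{\alpha}$ this is compatible with the divisibility constraint $0 \le t \le s$ for $s$ large.

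\textbf{Main obstacle.} The delicate point is guaranteeing the single group of size $w$ comes out with \emph{exactly} the prescribed value $w$ while $u$ simultaneously sweeps the full residue class — the hole size is fixed but $u$ must vary, so the weighting on the $t$-group must be held constant (e.g.\ weight $w/t$ for a fixed $t$) even as $s$ grows. I expect to fix $t$ once and for all (so that $t \mid w$ with $w/t$ a legal weight admitting the needed ingredient GDDs on blocks through the hole) and then vary only $s$; verifying that such a $t$ exists, that $t \le s$ for all large $s$, and that the blocks meeting the hole group admit GDDs with the mixed weight profile $(\alpha,\dots,\alpha,w/t)$ is the crux. This last ingredient requirement is where one must either appeal to Theorem~\ref{asym-gdd} for the existence of the small uniform-plus-one GDDs, or absorb it into the choice of $s_0$; I would present the verification of these ingredient designs as the key lemma underlying the application of Wilson's Fundamental Construction.
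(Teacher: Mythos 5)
There are two genuine gaps here, and both stem from your decision to inflate by weight $\alpha$ via Wilson's Fundamental Construction instead of filling groups. First, Lemma~\ref{wfc-lemma} does not split groups: its output has group type $\left[\sum_{x\in V_i}\omega(x)\right]$, so weighting the master GDD$(s^mt^1,K)$ by $\alpha$ produces a GDD of type $(\alpha s)^m(\cdot)^1$, not $\alpha^{ms}(\cdot)^1$. Your claim that each inflated group is ``split further into $s$ groups of size $\alpha$'' is precisely the step that requires a construction: one must fill each group of size $\alpha s$ with a GDD$(\alpha^{s},K)$, and the existence of that filler is the real content of the proposition. Second, and more fatally, the ingredient designs needed for your weighting step need not exist. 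Constant weight $\alpha$ on a block of size $k'\in K$ requires a GDD$(\alpha^{k'},K)$; these are small \emph{fixed} designs, and Theorem~\ref{asym-gdd} --- which is asymptotic in the number of groups --- says nothing about them. They can genuinely fail to exist: for $K=\{5,7\}$ we have $\alpha=2$, and a GDD$(2^5,\{5,7\})$ can contain no block of size $7$ (blocks meet each group at most once), so it would have to be a TD$(5,2)$, i.e.\ $3$ MOLS of order $2$, which do not exist. The same problem afflicts the mixed-profile ingredients $(\alpha,\dots,\alpha,w/t)$ you need on blocks through the hole; you flag this as ``the crux,'' but the appeal to Theorem~\ref{asym-gdd} cannot close it, so the proposal collapses exactly at its key lemma.

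The paper's proof sidesteps all of this by never inflating. Since $w\equiv 0\pmod{\alpha}$ is fixed, it takes $t=w$ directly in Lemma~\ref{sparse1}: with $s=\alpha a\gamma\ge\max(s_0,w)$ this gives a GDD$(s^mw^1,K)$ outright, so the hole already has its final size and no weighting of hole points is ever needed. Each group of size $s$ is then filled (not inflated) with a GDD$(\alpha^{a\gamma},K)$, which exists for all large $a$ by Theorem~\ref{asym-gdd} because the number of groups $a\gamma$ is large and divisible by $\gamma$ --- the asymptotic theorem is invoked where it actually applies. The result is a GDD$(\alpha^{a\gamma m}w^1,K)$ with $u=a\gamma m=aM_1$, and letting $a$ range over all sufficiently large integers covers every sufficiently large $u\equiv 0\pmod{M_1}$. (Note also that filling yields $u=ms/\alpha$ groups, not your $u=ms$; the discrepancy is exactly the unnecessary factor-$\alpha$ blow-up introduced by inflation.)
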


\begin{proof}
Let $s=\alpha a \gamma$ where $a$ is large enough that there 
exists a GDD$(\alpha^{a\gamma},K)$, by Theorem~\ref{asym-gdd}. We assume 
$s$ is large enough that there exists a GDD$(s^mw^1,K)$ from Lemma~\ref{sparse1}.
Now fill in the groups of size $s$ with GDD$(\alpha^{a\gamma},K)$. This gives us 
a GDD$(\alpha^{a\gamma m}w^1,K)$.
\end{proof}

\section{Resolvable designs}

The constructions in this section are critically important for our existence theory.  The first facilitates covering all necessary congruence classes, and the second
produces constructions close to extremal in the hole-size inequality.

\subsection{Preliminary results}

We say that a design on point set $V$ is \emph{resolvable} if its block
collection $\cB$ can be resolved into partitions of $V$, each of which is
called a \emph{parallel class}.  

%Typically, resolvable designs are studied
%in the special case of a single block size.  Under this assumption, the
%number of parallel classes equals the `replication number' of the design,
%since every point is in exactly one parallel class for each incident block.  The 
%case in which a mixture of block sizes occur is briefly considered in Section~\ref{curds}.

We begin by citing a known existence result for resolvable group divisible designs having
fixed group size and a single block size.

\begin{thm}[\cite{CDLL}]\label{asym-rgdd}
Given integers $g \ge 1$ and $k \ge 2$, there exist resolvable GDD$(g^u,\{k\})$
for all sufficiently large integers $u$ satisfying
\begin{eqnarray}
\label{rgdd-neccond1}
gu &\equiv& 0 \pmod{k} ~\text{and} \\
\label{rgdd-neccond2}
g(u-1) &\equiv& 0 \pmod{k-1}.
\end{eqnarray}
\end{thm}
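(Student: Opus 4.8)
The statement goes beyond the plain group divisible existence theorem (Theorem~\ref{asym-gdd}) only in demanding a \emph{resolution}, so the plan is to run a recursive existence argument whose building blocks carry their parallel-class structure with them. The target object is a decomposition of the complete multipartite graph $K_{u\times g}$ (the graph on $gu$ vertices with $u$ parts of size $g$, regular of degree $g(u-1)$) into $k$-cliques, together with a partition of those cliques into $r=g(u-1)/(k-1)$ parallel classes of $gu/k$ cliques each. I would treat this as an instance of the general asymptotic existence theory for resolvable designs: resolvability can be encoded as extra combinatorial structure layered on top of a graph decomposition, and the Lamken--Wilson edge-colored decomposition theorem then yields exactly the conclusion that the divisibility conditions for such structures are asymptotically sufficient. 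The first step, then, is to record that the two necessary conditions (\ref{rgdd-neccond1}) and (\ref{rgdd-neccond2}) are precisely the integrality of $gu/k$ (a parallel class partitions $gu$ points into $k$-blocks) and of $r=g(u-1)/(k-1)$ (the replication number), and that these are the only constraints that survive as $u\to\infty$.

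For the recursion itself, the engine is Wilson's Fundamental Construction (Lemma~\ref{wfc-lemma}) used in a resolvability-preserving form. One starts from a master uniform GDD or transversal design (available for all large orders by Theorem~\ref{asymptotic-td}), assigns integer weights to its points, and inflates each block by a resolvable ingredient design of uniform type. The crucial structural point is that resolvability multiplies: if the parallel classes of the ingredient designs are synchronized along a common `class coordinate' across the blocks of the master design, the inflated blocks assemble into genuine parallel classes of the product. Adjusting the master design and the weights then produces resolvable GDDs of type $g^u$ for all large $u$ lying in a fixed residue class, with a finite set of small direct (finite-geometric or computer-found) resolvable designs seeding the recursion.

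I expect the main obstacle to be exactly the interplay between resolvability and the two congruence conditions. The base existence results (Theorems~\ref{asymptotic-bd} and~\ref{asym-gdd}) are blind to the resolution, so the parallel-class structure must be threaded through every weighting and filling step without ever being guaranteed for free; and the natural scaffold for resolvability, namely $k$-\emph{frames}, is more restrictive than the target, since a $k$-frame of type $g^u$ has each group missed by exactly $g/(k-1)$ holey parallel classes and hence forces $(k-1)\mid g$, which (\ref{rgdd-neccond1})--(\ref{rgdd-neccond2}) do not. Overcoming this means working with frames of a larger admissible group size and recovering the true parameters by a resolvable breaking-up step, while simultaneously arranging the number theory so that every admissible residue of $u$ modulo $k(k-1)$ is covered and $u$ can be taken as large as needed. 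Marshalling the asymptotic frames, the resolvable fillers, and this congruence bookkeeping is where the real work lies, and the complete argument is carried out in \cite{CDLL}.
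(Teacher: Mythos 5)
The first thing to say is that the paper itself contains no proof of this statement: Theorem~\ref{asym-rgdd} is imported verbatim from \cite{CDLL}, and the only ``proof'' in the paper is that citation. Your proposal ultimately does the same thing (its last sentence defers the complete argument to \cite{CDLL}), so judged as a self-contained argument it must stand on its middle two paragraphs, and there it has a genuine gap. The engine you propose --- Wilson's Fundamental Construction (Lemma~\ref{wfc-lemma}) run in a ``resolvability-preserving form'' by synchronizing parallel classes of the ingredient designs ``along a common class coordinate'' --- is not an available mechanism: plain WFC destroys resolutions, and making inflation respect parallel classes is precisely the hard content of the subject (it is why frames, resolvable transversal designs as ingredients, and careful completion tricks were invented). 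You assert the synchronization rather than construct it. Moreover, your own (correct) observation that a $k$-frame of type $g^u$ forces $(k-1)\mid g$ shows the scaffold you fall back on cannot carry the general case of (\ref{rgdd-neccond1})--(\ref{rgdd-neccond2}), and the ``resolvable breaking-up step'' that is supposed to repair this is never described. So the recursion, as outlined, does not close.

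What \cite{CDLL} (following \cite{DL}) actually does is closer to your first paragraph, which you then abandon: encode the resolution itself as a decomposition problem and apply the Lamken--Wilson edge-colored decomposition theorem \cite{LW} once, rather than running a WFC recursion. Concretely, a resolvable GDD$(g^u,\{k\})$ with $r=g(u-1)/(k-1)$ parallel classes is equivalent, via the projective extension that this paper itself recalls in Section 3.1, to a $\{k+1\}$-GDD of type $g^u r^1$ in which every block meets the distinguished group --- each parallel class becomes a new point coned onto its blocks --- and this rigid, ``equality-case'' structure is exactly the kind of edge-colored graph decomposition to which the Lamken--Wilson machinery applies, with its hypotheses reducing to the two stated congruences. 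Your identification of (\ref{rgdd-neccond1}) as integrality of the parallel-class size and (\ref{rgdd-neccond2}) as integrality of the replication number is correct and matches the remark the paper makes right after the theorem; but between that observation and the final appeal to \cite{CDLL}, the proposal supplies no working proof.
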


Note that (\ref{rgdd-neccond1}) is necessary for the existence of a parallel class, and (\ref{rgdd-neccond2}) matches (\ref{local-gdd}) for GDDs in the case $K=\{k\}$ with no assumption of resolvability.

In \cite{DLLholes} and \cite{DvB}, resolvable designs are used to 
construct examples of IPBDs in an important congruence class.  The main idea is to consider a `projective' extension in which
each parallel class defines a new point (in the hole) and all of its blocks are extended to include this new point.

Our strategy here, similar to that in \cite{DvB}, is to let $q$ be chosen to admit a GDD$(\alpha^q,K)$.  Then, from a resolvable PBD$(x,\{q-1\})$
we get an IPBD$(1^xy^1,\{q\})$ with $x+y=(q-1)y+1$ from its projective extension.

\begin{prop}
\label{sparse-resol}
Given $K$, a modulus $M_1$, and $w_0=y_0 \alpha$, 
an arbitrary multiple of $\alpha$ (modulo $M_1$), there exists a 
GDD$(\alpha^{x}(\alpha y)^1,K)$ where $x+2y\equiv 1 \pmod{\gamma}$ 
and $y\equiv y_0 \pmod{M_1}$.
\end{prop}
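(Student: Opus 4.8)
The plan is to realize the required GDD as a weighted inflation, via Wilson's Fundamental Construction (Lemma~\ref{wfc-lemma}), of the projective extension of a resolvable Steiner system, following the sketch given just before the statement. Concretely, I would fix an ingredient block size $q$ with $\gamma \mid q$ and $q \ge u_0$, so that GDD$(\alpha^q,K)$ exists; build a resolvable PBD$(x,\{q-1\})$ with exactly $y$ parallel classes; extend it projectively to a GDD$(1^x y^1,\{q\})$; and then inflate every point by weight $\alpha$, filling each block of size $q$ with a copy of GDD$(\alpha^q,K)$. The output is a GDD$(\alpha^x(\alpha y)^1,K)$, and the entire task reduces to arranging $x$ and $y$ so as to meet the two prescribed congruences.

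A resolvable Steiner system S$(2,q-1,x)$ has replication number, hence number of parallel classes, equal to $(x-1)/(q-2)$, so to obtain $y$ classes I set $x=(q-2)y+1$. Theorem~\ref{asym-rgdd} (with $g=1$ and $k=q-1$) supplies such a design for all large $x$ satisfying $x \equiv 0 \pmod{q-1}$ and $x-1 \equiv 0 \pmod{q-2}$. The second condition is automatic from $x=(q-2)y+1$, while the first, using $q-2 \equiv -1 \pmod{q-1}$, is equivalent to $y \equiv 1 \pmod{q-1}$. Since I also want $y \equiv y_0 \pmod{M_1}$, I must solve these two congruences on $y$ simultaneously; by the Chinese Remainder Theorem this is possible for every target $y_0$ precisely when $\gcd(q-1,M_1)=1$.

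Reconciling this coprimality with the requirement $\gamma \mid q$ is the only genuinely delicate point, and I expect it to be the main obstacle. It is handled cleanly by choosing $q$ to be a large multiple of $M_1$ itself. Then $\gamma \mid M_1 \mid q$ supplies both $q \ge u_0$ (by taking $q$ large) and the divisibility needed for the ingredient GDD$(\alpha^q,K)$, while $q \equiv 0 \pmod{M_1}$ forces $q-1 \equiv -1 \pmod{M_1}$, so no prime factor of $M_1$ divides $q-1$ and hence $\gcd(q-1,M_1)=1$. With $q$ so fixed, the Chinese Remainder Theorem yields a residue class for $y$ modulo $(q-1)M_1$ satisfying both $y \equiv 1 \pmod{q-1}$ and $y \equiv y_0 \pmod{M_1}$, and I take $y$ (equivalently $x=(q-2)y+1$) large within this class so that Theorem~\ref{asym-rgdd} applies.

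Projectively extending the resolvable PBD$(x,\{q-1\})$ — introducing one new point per parallel class and appending it to every block of that class — produces a GDD$(1^x y^1,\{q\})$ in which the $y$ new points form the single group of size $y$ and every block has size $q$. Assigning weight $\alpha$ to each point and applying Wilson's Fundamental Construction with the ingredient GDD$(\alpha^q,K)$ then yields a GDD$(\alpha^x(\alpha y)^1,K)$. Finally, $y \equiv y_0 \pmod{M_1}$ holds by the choice of $y$, and $x+2y = qy+1 \equiv 1 \pmod{\gamma}$ since $\gamma \mid q$, so both required congruences are met and the construction is complete.
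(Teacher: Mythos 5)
Your proposal is correct and follows essentially the same route as the paper: choose $q$ with $\gamma \mid q$ and $\gcd(q-1,M_1)=1$, projectively extend a resolvable PBD$(x,\{q-1\})$ to a $\{q\}$-GDD of type $1^x y^1$, solve the congruences on $y$ by the Chinese Remainder Theorem, and inflate by weight $\alpha$ via Wilson's Fundamental Construction using GDD$(\alpha^q,K)$. Your explicit choice of $q$ as a large multiple of $M_1$ (so that $\gcd(q-1,M_1)=1$ is automatic) is a clean concrete realization of a choice the paper merely asserts is possible, and your appeal to Theorem~\ref{asym-rgdd} makes explicit the resolvable ingredient the paper only references informally.
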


\begin{proof}
Choose a sufficiently large positive integer $q$ such that 
gcd$(q-1,M_1)=1$ and $\gamma | q$ and 
such that there exists an IPBD$(1^xy^1,\{q\})$ where $x+2y\equiv 1 \pmod{q}$.
The necessary conditions for the existence of this IPBD are $x\equiv 0 \pmod{q-1}$, 
$y\equiv 1 \pmod{q-1}$ and $x+2y-1\equiv 0\pmod{q}$. 
Since $\gcd(q-1,M_1)=1$, we can use the Chinese remainder theorem to  
find $y\equiv 1 \pmod{(q-1)}$ and $y\equiv y_0 \pmod{M_1}$ (and a corresponding $x$).
We now apply Wilson's Fundamental Construction to this IPBD$(1^xy^1,\{q\})$ weighting 
each element by $\alpha$. Since $\gamma | q$, there exists a GDD$(\alpha^q,K)$.
The resulting design is a GDD$(\alpha^{x}(\alpha y)^1,K)$ where $x+2y\equiv 1 
\pmod{\gamma}$ and $y\equiv y_0 \pmod{M_1}$.
\end{proof}

Note that in the case $K=\{k\}$, the above construction simplifies somewhat in that we may choose $q=k$ and use PBDs in place of GDDs; see \cite{DLLholes}.
For mixed block sizes, a disadvantage of this construction is that the ratio $x/y$ depends on $q$ in general, and hence may be very large relative to $k$.  The GDD in Proposition~\ref{sparse-resol} is still important for the congruence condition, which is shown later to `generate' all admissible congruence classes.  However, for hole sizes approaching the bound, we aim for parallel classes with mostly blocks of size $k-1$.  

\subsection{Class-uniformly resolvable designs}
\label{curds}

A design is \emph{class-uniformly resolvable} if it is resolvable in such a way that each parallel class has the same multiset of block sizes.  Such a multiset is typically specified in advance, as in a scheduling problem.  Class-uniformly resolvable designs were introduced in \cite{LRVcurds}, where they were studied for the first natural set of block sizes $K=\{2,3\}$.  Abbreviations such as `CURD' or `CURGDD' have taken hold, and explicit constructions have been found in several cases.

For technical reasons, our main construction requires an ingredient design with a large hole which is arbitrarily close to the upper bound dictated by $k = \min K$ but which uses a mixture of block sizes in $K$.  A CURD with block sizes in $\{k-1: k \in K\}$ is well-suited to this.  To build such a CURD, we employ the following theorem on resolvable graph decompositions  of complete multipartite graphs.

\begin{thm}[\cite{CDLL,DL}]
\label{grgdd}
Let $G$ be a graph with $n$ vertices, $m>0$ edges, and vertex degrees $d_1,\dots,d_n$.  Define $\alpha^*=\alpha^*(G)$ to be the least positive integer $a$ such that the vector $a(1,n/2m)$ is an integral linear combination of $(d_i,1)$, $i=1,\dots,n$. Under the assumption that $\gcd(n,\alpha^*)=1$, there exists a resolvable $G$-GDD of type $g^u$ for any positive integer $g$ and all integers $u \ge u_0(G,g)$ satisfying $gu \equiv 0 \pmod{n}$ and $g(u-1) \equiv 0 \pmod{\alpha^*}$.
\end{thm}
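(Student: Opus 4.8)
The plan is to first record the necessary conditions --- which will reveal the meaning of $\alpha^*$ and show that the two stated congruences are exactly the obstructions --- and then to obtain asymptotic sufficiency by encoding the resolvable decomposition as a decomposition of an edge-coloured complete graph and invoking the asymptotic existence theorem of Lamken and Wilson \cite{LW}.

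For necessity I would argue by double counting. Since each parallel class partitions the $gu$ vertices into vertex-disjoint copies of $G$, each on $n$ vertices, we need $n \mid gu$; moreover each class contains $gu/n$ copies. The complete multipartite host graph has $\binom{u}{2}g^2$ edges and each copy of $G$ carries $m$ of them, so the number of parallel classes must be $r = \binom{u}{2}g^2 \big/ (m \cdot gu/n) = g(u-1)\,n/(2m)$. Now fix a vertex $x$ and, for each $i$, let $c_i$ be the number of classes in which $x$ plays the role of a degree-$d_i$ vertex of $G$. Then $\sum_i c_i = r$ and $\sum_i c_i d_i = g(u-1)$, the latter being the degree of $x$ in the host, so $g(u-1)\,(1, n/2m) = \sum_i c_i (d_i,1)$ lies in the lattice generated by the $(d_i,1)$. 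Since the set of $a$ with $a(1,n/2m)$ in this lattice is a subgroup of $\Z$, hence $\alpha^*\Z$, we conclude $\alpha^* \mid g(u-1)$; this is the content of $g(u-1)\equiv 0 \pmod{\alpha^*}$, and it also forces $r$ to be an integer.

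For sufficiency I would encode resolvability in the Lamken--Wilson framework. The idea is to augment the host with a set of `class points' and to tag each copy of $G$ with the class point of its parallel class, so that a resolvable $G$-GDD of type $g^u$ becomes a decomposition of an edge-coloured complete (multipartite) graph into a bounded family of coloured gadgets, each gadget being a copy of $G$ together with the edges recording its class membership. The theorem of \cite{LW} then yields such decompositions for all sufficiently large orders provided its local and global integrality conditions hold, and the bulk of the work is to check that for type $g^u$ these conditions collapse exactly to $n \mid gu$ and $\alpha^* \mid g(u-1)$. This is where the hypothesis $\gcd(n,\alpha^*)=1$ enters: it decouples the `edge-count' constraint governed by $n$ from the `degree/class' constraint governed by $\alpha^*$, so that no residual mixed congruence survives.

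The main obstacle I expect is realizability: the per-vertex lattice identity above must be upgraded to a globally consistent assignment, in that for each class the multiset of $G$-vertex roles taken by the $gu$ vertices must itself be the role sequence of an honest partition into $gu/n$ copies of $G$, and these assignments must balance simultaneously across all $r$ classes. Verifying that such a \emph{role design} exists (a flow- or GDD-type feasibility statement) and that it can then be inflated by Wilson's Fundamental Construction (Lemma~\ref{wfc-lemma}) from a bounded seed while preserving resolvability is the crux; the coprimality $\gcd(n,\alpha^*)=1$ together with $u$ large are precisely what make this feasibility go through. An alternative, more hands-on route would build the design directly from a resolvable transversal design or a resolvable GDD seed as in Theorem~\ref{asym-rgdd}, via a resolution-preserving version of the Fundamental Construction, but the Lamken--Wilson reduction is cleaner and handles arbitrary $G$ uniformly.
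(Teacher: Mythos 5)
First, a point of reference: the paper itself does not prove Theorem~\ref{grgdd}; it is imported from \cite{CDLL,DL}, so your attempt has to be measured against the methods of those papers rather than an internal argument. Your necessity computation is correct and is the standard way $\alpha^*$ arises: spanning classes force $n \mid gu$, the class count is $r = g(u-1)n/(2m)$, and role-counting at a fixed vertex shows $g(u-1)(1,n/2m)$ lies in the lattice spanned by the $(d_i,1)$, whence $\alpha^* \mid g(u-1)$ and $r \in \Z$. (Necessity is not actually asserted in the statement, but it is the right motivation, and the ideal-of-$\Z$ observation you use is the same one the paper invokes in Section 3.2.)

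The sufficiency half has a genuine gap, located exactly at the step ``invoke \cite{LW}.'' The Lamken--Wilson theorem decomposes a \emph{uniformly} edge-coloured complete multigraph --- every pair of the $N$ vertices carries the same vector of colour multiplicities --- into members of a fixed finite family of coloured graphs. Your augmented host is nothing of this kind: within-group pairs carry no edge, cross-group pairs carry colour $1$, point--class pairs carry colour $2$, class--class pairs carry nothing, and, fatally, the class part has $r = g(u-1)n/(2m) = \Theta(u)$ vertices. No choice of colours presents a host with a distinguished part of size linear in $N$ as a uniformly coloured complete graph, and the multipartite extensions of \cite{LW} (Draganova \cite{Draganova}, Liu \cite{Liu}) require all parts to have \emph{bounded} size as the number of parts grows. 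Decomposing a complete graph minus a hole whose size is a constant fraction of the whole is precisely the regime where such asymptotic theorems give nothing --- that is the entire reason for existence of \cite{DLLholes} and of the present paper (Theorem~\ref{main}), so the reduction is in effect circular: the ``realizability'' issue you flag at the end is not a feasibility check to be pushed through with coprimality and large $u$; under your encoding it \emph{is} the theorem. The cited proofs avoid this by never creating an unbounded hole: edge-coloured decomposition machinery is used only to supply ingredients whose holes stay bounded (e.g.\ frames, whose holey parallel classes each miss a single group of fixed size $g$), and resolvability of the target is then assembled by resolution-preserving compositions and fillings in the spirit of Ray-Chaudhuri--Wilson, with $\gcd(n,\alpha^*)=1$ serving to reach every admissible congruence class in $u$. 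This is essentially the ``alternative, more hands-on route'' that you mention in your last sentence and set aside; it, and not the direct Lamken--Wilson reduction, is the workable one.
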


To apply this result to CURDs, consider graphs built from a vertex-disjoint union of cliques.  If we can obtain a certain resolvable decomposition into such a graph, it can be regarded as a CURD whose class types have a pre-specified proportion of block sizes.

We now consider the details necessary for our particular use of Theorem~\ref{grgdd}.  First, we may replace $K$ by a finite subset $\{k_1,k_2,\dots,k_t\}$ of $K$ having the same parameters $\alpha$ and $\beta$, and we may assume $k_1<k_2<\dots<k_t$ for some integer $t\ge 2$.  Fix a real number $\theta>0$ and build a graph $G_{K,\theta}$ as a vertex-disjoint union of $n_i$ cliques of size $k_i-1$, $i=1,\dots,t$.  We would like to choose $n_1 = O(1/\theta)$ and $n_i$ bounded for $i=2,\dots,t$, so that the graph approximates a disjoint union of cliques of the smallest size, yet has the desired $\alpha^*$.

To this end, put $n := |V(G_{K,\theta})|$ and $m:=|E(G_{K,\theta})|$.  
Recall that
\begin{equation}
\label{n-over-alpha}
n= \sum_{i=1}^t n_i (k_i-1),
\end{equation}
and that $\gcd(\{(k_i-1)/\alpha:i=1,\dots,t\}) = 1$.
By Schur's theorem on conical combinations of integers (the `coin problem'), there exists a choice of nonnegative integers $n_2,\dots,n_t$ so that $n':=\sum_{i=2}^t n_i(k_i-1)/\alpha$ is relatively prime to $(k_1-1)/\alpha$.  By Dirichlet's theorem on primes in arithmetic progressions, there exists a choice of $n_1 = O(1/\theta)$ so that $n_1(k_1-1)/\alpha+n'$ equals a large prime, say $p$.  From (\ref{n-over-alpha}), we have $n=p\alpha$ vertices in our graph.

Considering the sum of vertex degrees, we have
$$\frac{2m}{\alpha} = \sum_{i=1}^t \frac{n_i(k_i-1)}{\alpha} (k_i-2) \equiv \sum_{i=1}^t \frac{n_i(k_i-1)}{\alpha} (-1) \equiv -\frac{n}{\alpha} = -p \pmod{\alpha}.$$ 
Recall that the set of integers $a$ such that the vector $a(1,n/2m)$ is an integral linear combination of $(d_i,1)$, $i=1,\dots,n$ forms the ideal $\langle \alpha^* \rangle \subseteq \Z$.
It follows that $\alpha^*(G_{K,\theta})$ divides $2m/\alpha$ and is coprime to $\alpha$.  Also, with $n_1$ sufficiently large (but depending only on $K$), we can ensure that $2m$ is very close to $n(k_1-2)$, say
$$p \alpha(k_1-2) < 2m < p (\alpha(k_1-2)+1).$$ 
With this choice, $2m$ is coprime to $p$ and it follows that $\gcd(n,\alpha^*)=1$ for $G_{K,\theta}$.    Theorem~\ref{grgdd} can then be invoked on $G_{K,\theta}$ and with group size $\alpha$.  The necessary and asymptotically sufficient conditions for the existence of the class-uniformly resolvable GDD based on $G_{K,\theta}$ are
$u \equiv 0 \pmod{p}$ and $u \equiv 1 \pmod{\alpha^*}$.  By choosing $n_1$ large, we may demand that the number of parallel classes is only a proportion $\theta$ less than that in a hypothetical resolvable GDD with block size $k_1-1$.  That is, the number of parallel classes in our class-uniformly resolvable GDD is at least $\alpha(u-1)(1-\theta)/(k_1-2)$.

\section{Arbitrary congruence classes}

An \emph{incomplete group divisible design}, or IGDD, is a quadruple
$(V,\Pi,\Xi,\cB)$ such that $V$ is a set of $v$ points, $\Pi =
\{V_1,\dots,V_u\}$ is a partition of $V$ into `groups',
$\Xi=\{W_1,\dots,W_u\}$ with $W_i \subseteq V_i$ called `holes' for each
$i$, and $\cB \subseteq \cup_{k \in K} \binom{V}{k}$ is a set of blocks such
that
\begin{itemize}
\item
two points get covered by a block (exactly one block) if and only if they
come from different groups, say $V_i$ and $V_j$, $i \neq j$, {\bf and} they
do not both belong to the corresponding holes $W_i$ and $W_j$.
\end{itemize}

Similar to GDDs, the type of an IGDD is a list of the pairs $(|V_i|;|W_i|)$ of
group/hole sizes, and the set of block sizes is typically indicated.  We are
interested here in the `uniform' case IGDD$((g;h)^u,K)$.  We state the
`divisibility' conditions for such designs.

\begin{prop}
\label{neccond-igdd}
The existence of an IGDD$((g;h)^u,K)$ implies
\begin{eqnarray}
\label{local-igdd}
g(u-1)~\equiv~h(u-1) &\equiv& 0 \pmod{\alpha(K)},  ~\text{and} \\
\label{global-igdd}
(g^2-h^2)u(u-1) &\equiv& 0 \pmod{\beta(K)}.
\end{eqnarray}
\end{prop}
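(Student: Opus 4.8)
The plan is to mimic the edge-decomposition arguments already used for Propositions~\ref{neccond} and~\ref{neccond-gdd-h}. First I would reinterpret an IGDD$((g;h)^u,K)$ as a clique decomposition of a suitable graph $G$. Here $G$ has vertex set $V$, of size $gu$, and two vertices are joined by an edge precisely when the corresponding pair must be covered by a block: that is, when the two points lie in different groups and do not both lie in their respective holes. Concretely, $G$ is the complete multipartite graph with $u$ parts of size $g$, from which we delete the edges of the complete multipartite graph with $u$ parts of size $h$ spanned by the holes $W_1,\dots,W_u$. The blocks of the IGDD then form an edge-decomposition of $G$ into cliques of sizes in $K$.

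For the local conditions $(\ref{local-igdd})$, I would count degrees in $G$. A point $x$ lying outside every hole, say $x \in V_i \setminus W_i$, is adjacent to every point in the other $u-1$ groups, so $\deg(x) = g(u-1)$. A point $x \in W_i$ is adjacent to every point of the other groups except the $h(u-1)$ points lying in the holes $W_j$, $j \ne i$, so $\deg(x) = (g-h)(u-1)$. As in Proposition~\ref{neccond}, the neighbours of any vertex are partitioned by the blocks through it into sets of size $k-1$, $k \in K$; hence each degree is a nonnegative integer combination of the values $k-1$ and is therefore divisible by $\alpha(K)$. When $0 < h < g$ both vertex types occur, giving $g(u-1) \equiv 0$ and $(g-h)(u-1) \equiv 0 \pmod{\alpha(K)}$, whence $h(u-1) \equiv 0 \pmod{\alpha(K)}$ by subtraction; the boundary cases $h=0$ (an ordinary GDD) and $h=g$ (empty block set) are immediate.

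For the global condition $(\ref{global-igdd})$, I would count edges. Each pair of distinct groups contributes $g^2$ edges to the complete multipartite graph and loses $h^2$ of them when the hole edges are removed, so $|E(G)| = \binom{u}{2}(g^2-h^2)$ and thus $2|E(G)| = (g^2-h^2)u(u-1)$. Since the blocks partition $E(G)$, and a block of size $k$ accounts for $\binom{k}{2}$ edges, the quantity $2|E(G)|$ is a nonnegative integer combination of the values $k(k-1)$, $k \in K$. Therefore $\beta(K)$ divides $(g^2-h^2)u(u-1)$, which is $(\ref{global-igdd})$.

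I do not expect a genuine obstacle here, as the argument is structurally identical to the earlier necessary-condition proofs. The only point demanding a little care is the degree of a hole vertex: one must subtract exactly the $h(u-1)$ hole neighbours (and none of the same-group points, which are never adjacent), so that the two local congruences combine to yield both halves of $(\ref{local-igdd})$. A secondary bookkeeping check is to confirm that the degenerate values of $h$ do not spoil the statement, as noted above.
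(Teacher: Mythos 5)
Your proof is correct and takes essentially the approach the paper intends: the paper states Proposition~\ref{neccond-igdd} without proof, and your degree- and edge-counting argument on the complete multipartite graph minus its hole-induced multipartite subgraph is exactly the routine adaptation of the proofs of Propositions~\ref{neccond} and~\ref{neccond-gdd-h} that the authors leave to the reader. The one inaccuracy is your remark that the boundary case $h=g$ is immediate: there the block set is empty and $g(u-1)\equiv 0 \pmod{\alpha(K)}$ does not follow at all, so that degenerate case must simply be excluded (just as the paper excludes $w=v$ for IPBDs), not derived.
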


We say integers $g,\ h$, and $u$ are \emph{admissible} if (\ref{local-igdd})
and (\ref{global-igdd}) hold.

Now, let $k=\min K$.  A similar counting argument as for
Proposition~\ref{holesize-bound} gives
$$\frac{hu(g-h)(u-1)}{k-1} \binom{k-1}{2} \le (g-h)^2 \binom{u}{2},$$
or
\begin{equation}
\label{ineq-igdd}
g \ge (k-1)h.
\end{equation}

From the theory of `edge-colored graph decompositions' (see \cite{LW}), we
have an asymptotic existence result (in $u$) for uniform IGDDs.  The proof
is sketched for $K=\{k\}$ in \cite{DLLholes} and given in full detail for
general $K$ in the thesis \cite{CvB}. The basic idea is to apply the main
result of \cite{LW} using $g^2-h^2$ edge colors.  In doing so, inequality
(\ref{ineq-igdd}) is needed for a nonnegative rational decomposition using a
certain edge-colored graph family.

\begin{thm}
\label{igdd-exist}
Given integers $g,h,k$ with $k \ge 2$ and $g \ge (k-1)h$, there exists an
IGDD$((g;h)^u,K)$ whenever $u$ is sufficiently large satisfying {\rm
(\ref{local-igdd})} and {\rm (\ref{global-igdd})}.
\end{thm}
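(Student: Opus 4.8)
The plan is to recast the existence of an IGDD$((g;h)^u,K)$ as a decomposition problem for an edge-colored complete graph and to invoke the asymptotic existence theorem of \cite{LW}. First I would replace $K$ by a finite subset having the same $\alpha$ and $\beta$, exactly as is done elsewhere in the paper, so that the relevant family of prototypes is finite. I would then regard the $u$ groups as the vertices of $K_u$ and attach to each edge $\{i,j\}$ a palette of $g^2-h^2$ colors, one for each admissible cross-group pair of points: all $g\times g$ pairs with the $h\times h$ hole--hole pairs deleted. A block of size $k'\in K$, which meets $k'$ distinct groups in one point each, corresponds exactly to a $k'$-clique of $K_u$ together with a consistent choice of a point in each of its vertices; this choice induces a color on each of the $\binom{k'}{2}$ edges of the clique. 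The family $\mathcal{F}$ of edge-colored prototypes thus consists of these consistently colored cliques of sizes $k'\in K$, of which there are finitely many up to isomorphism. Note that the constraint ``at most one hole point per block'' is automatically enforced: a prototype with two hole points in distinct vertices would require a hole--hole color on the edge between them, and such colors are absent from every palette. An IGDD is then precisely a decomposition of the edge-colored $K_u$ into members of $\mathcal{F}$, each colored edge being covered exactly once.

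The bulk of the argument is to verify the hypotheses of the main theorem of \cite{LW} for this target and this family. The local divisibility conditions of that theorem, expressed through the degree vectors of the prototypes over the two vertex types (non-hole, of degree $(u-1)g$, and hole, of degree $(u-1)(g-h)$), unwind to the two congruences in (\ref{local-igdd}); the global condition unwinds to (\ref{global-igdd}) after counting colored edges. These are routine once the dictionary above is fixed.

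The step I expect to be the main obstacle is the remaining hypothesis of \cite{LW}, the existence of a \emph{nonnegative rational} decomposition of the target by the prototypes. I would establish this by exhibiting an explicit fractional solution, reducing via the symmetry of the coloring (the palette is identical on every edge and $\mathcal{F}$ is closed under relabeling of groups and points) to a small linear system matching the prescribed cover at a generic non-hole point and at a generic hole point. The binding constraint sits at a non-hole point $y$: of its $(u-1)g$ incident edges, exactly $(u-1)h$ run to hole points, and since each block carries at most one hole point, each block through $y$ covers at most one such hole-edge; hence at least $(u-1)h$ blocks pass through $y$, while its degree admits at most $(u-1)g/(k-1)$ blocks. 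Feasibility of the fractional system is exactly the compatibility of these two counts, which holds precisely when $g\ge(k-1)h$, i.e.\ inequality (\ref{ineq-igdd}). Thus the inequality is not an artifact but the genuine obstruction to the fractional relaxation, and this is where the hypothesis $g\ge(k-1)h$ is consumed.

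With all three hypotheses in hand, the theorem of \cite{LW} yields a decomposition of the edge-colored $K_u$ into members of $\mathcal{F}$ for every sufficiently large $u$ meeting (\ref{local-igdd}) and (\ref{global-igdd}); translating this back through the dictionary produces the desired IGDD$((g;h)^u,K)$. For $K=\{k\}$ this recovers the sketch in \cite{DLLholes}, and the only genuinely new bookkeeping for general $K$ is that $\mathcal{F}$ now contains several clique sizes, which affects the divisibility computations of the second paragraph but leaves the structure of the fractional argument unchanged.
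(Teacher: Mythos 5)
Your proposal is correct and follows essentially the same route as the paper, which likewise reduces IGDD$((g;h)^u,K)$ existence to the main theorem of \cite{LW} with $g^2-h^2$ edge colors and notes that the inequality $g \ge (k-1)h$ is exactly what is needed for the nonnegative rational decomposition (the paper only sketches this, deferring details to \cite{DLLholes} and \cite{CvB}). Your dictionary of colored-clique prototypes, the unwinding of the divisibility conditions, and the identification of the fractional relaxation as the step consuming the hypothesis all match the intended argument.
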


Groups of an IGDD can be filled with IPBDs, and excess hole points can be
identified.  This is a standard filling construction, stated here without
proof for later use.  The case of a single block size appears in
\cite{DLLholes}.

\begin{cons}
\label{igdd-fill}
Suppose there exists an IGDD$((g;h)^u,K)$ and an IPBD$((x;y),K)$ with
$g-h=x-y$ and $y \ge h$.  Then there exists an IPBD$((v;w),K)$ with
$v-w=u(x-y)$ and $w=(u-1)h+y$.
\end{cons}

We can also fill in the groups of an IGDD with GDDs.

\begin{cons}
\label{igdd-fillgdd}
Suppose there exists an IGDD$((g;h)^u,K)$ and a GDD$(\alpha^x(\alpha y)^1,K)$
where $g-h =\alpha x$ and $h \le \alpha y$. Then there exists a GDD$(\alpha^nw^1)$ 
where $n=ux$ and $w=h(u-1) + \alpha y\equiv 0\pmod{\alpha}$.
\end{cons}

We use the designs constructed in Propositions~\ref{sparse-resol} together with 
this construction to produce the remaining examples for admissible congruence 
classes. We construct GDD$(\alpha^nw^1,K)$ where $w=\alpha s$ for 
admissible congruence classes of $n$ and $w$ modulo $M_1$.

\begin{prop}
\label{fixed-arb}
Given K, a positive modulus $M_1$, and admissible congruence classes $n_0$ and $s_0$ 
modulo $M_1$ for GDDs of type $\alpha^{n_0} (\alpha s_0)^1$ with block sizes $K$, 
there exist GDD$(\alpha^n(\alpha s)^1, K)$ for infinitely many  $n\equiv n_0 \pmod{M_1}$ 
and $s\equiv s_0\pmod{M_1}$.
\end{prop}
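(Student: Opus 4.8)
The plan is to realize the prescribed classes by \emph{filling} a suitable incomplete group divisible design with the flexible ``sparse'' design produced in Proposition~\ref{sparse-resol}, using the group-filling Construction~\ref{igdd-fillgdd}. First I would invoke Proposition~\ref{sparse-resol} to obtain a filler GDD$(\alpha^x(\alpha y)^1,K)$ in which $x$ and $y$ are large, $y\equiv y_0\pmod{M_1}$ for a value $y_0$ to be pinned down, and $x+2y\equiv 1\pmod{\gamma}$ (recall $x=(q-2)y+1$ from that proof). I would then select a \emph{bounded} hole multiplier $h'$ and set $h=\alpha h'$ and $g=\alpha(x+h')$, so that the matching condition $g-h=\alpha x$ of Construction~\ref{igdd-fillgdd} holds, while the hole inequality $g\ge(k-1)h$ reduces to $x\ge (k-2)h'$ and $h\le \alpha y$ reduces to $h'\le y$, both harmless since $x,y$ are large. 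For all sufficiently large $u$ in the appropriate residue class, Theorem~\ref{igdd-exist} supplies an IGDD$((g;h)^u,K)$, and Construction~\ref{igdd-fillgdd} then yields a GDD$(\alpha^n(\alpha s)^1,K)$ with
\[
n=ux \qquad\text{and}\qquad s=h'(u-1)+y .
\]
Letting $u$ range over its residue class produces infinitely many such designs, so everything comes down to steering $(n,s)$ into the prescribed classes modulo $M_1$.

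The congruence bookkeeping then proceeds in stages. With the filler fixed, the idea is to use the residue $y_0$ (which controls $x\bmod M_1$, hence also the forced value of $u$), the large free parameter $u$, and the bounded parameter $h'$ as the three levers. Choosing $y_0$ so that $x\equiv(q-2)y_0+1$ is a unit modulo $M_1$ makes $u\mapsto ux$ a bijection mod $M_1$, so there is a unique class of $u$ with $ux\equiv n_0\pmod{M_1}$; I would take $u$ in this class and additionally large, which is unobstructed since it is a full residue class. With $u$ thereby fixed modulo $M_1$—hence modulo $\gamma$, as $\gamma\mid M_1$—the second equation $h'(u-1)+y\equiv s_0\pmod{M_1}$ becomes a single linear congruence for $h'$, and because $x$ and $y$ are large the admissible interval $1\le h'\le\min(y,\,x/(k-2))$ has length exceeding $M_1$, so a solution in the right class can be chosen. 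The finer coprimality demands (for instance arranging $\gcd(u-1,M_1)$ so that this last congruence is solvable) are to be met by a prime-by-prime selection of $y_0$ via the Chinese remainder theorem.

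The delicate point, and the step I expect to be the main obstacle, is reconciling these target congruences with the \emph{divisibility} conditions that the intermediate IGDD must itself satisfy. The global condition of Proposition~\ref{neccond-igdd}, with $g=\alpha(x+h')$ and $h=\alpha h'$, reduces modulo $\gamma$ to $x(x+2h')\,u(u-1)\equiv 0\pmod{\gamma}$, which couples $h'\bmod\gamma$ with the already-fixed $u\bmod\gamma$. Since $\gamma\mid M_1$, the residue of $h'$ forced by the equation for $s$ already determines $h'\bmod\gamma$, so one must check that this forced residue is compatible with the IGDD divisibility. I would derive this compatibility from the hypothesis that $\alpha^{n_0}(\alpha s_0)^1$ is admissible, i.e.\ that $n_0(n_0+2s_0-1)\equiv 0\pmod{\gamma}$ holds (Proposition~\ref{neccond-gdda-h}): combined with the guaranteed relation $x+2y\equiv 1\pmod{\gamma}$ of the filler and the identities $n=ux$, $s=h'(u-1)+y$, the output's quadratic residue condition should force exactly the congruence required for the IGDD. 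Making this interlocking of the three quadratic conditions (IGDD, filler, and output) explicit modulo $\gamma$ is the technical heart of the argument; the remaining largeness and unit conditions are then arranged routinely by taking $q$ (hence $x,y$) large and applying the Chinese remainder theorem.
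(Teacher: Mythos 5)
Your skeleton is the same as the paper's own proof: fill the groups of an IGDD$((g;h)^u,K)$ supplied by Theorem~\ref{igdd-exist} with the GDD$(\alpha^x(\alpha y)^1,K)$ of Proposition~\ref{sparse-resol} via Construction~\ref{igdd-fillgdd}, then chase congruences with the Chinese remainder theorem. But the two steps you defer are precisely where the proof lives, so as written there are genuine gaps. The first is the congruence bookkeeping. Your ordering (pick $y_0$, hence $x$; solve $ux\equiv n_0 \pmod{M_1}$ for $u$; then solve $h'(u-1)\equiv s_0-y_0\pmod{M_1}$ for $h'$) creates a circular dependency you never break: solvability of the last congruence requires $\gcd(u-1,M_1)\mid (s_0-y_0)$, but $u$ is itself a function of $y_0$ through $x$, so ``arranging the coprimality by a prime-by-prime selection of $y_0$'' is exactly what must be exhibited and is not. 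It can be done --- if $2\mid M_1$ then $\gcd(q-1,M_1)=1$ forces $q$ even, so $x$ is odd and $y_0\equiv s_0$, $h'\equiv 0$ works at $2$; at an odd prime $p$ take $y_0\equiv s_0$, $h'\equiv 0$ when $(q-2)s_0+1$ is a unit, and otherwise $y_0\equiv s_0+\delta$ with $\delta\not\equiv 0,\,n_0(q-2)^{-1}\pmod{p}$, which makes both $x$ and $u-1$ units --- but none of this case analysis appears in your write-up. The paper sidesteps the issue by reversing the order of choices: it solves (\ref{u-cong}) directly for $u$ and $h$, taking $u\equiv \frac{1}{2}$, $h\equiv\alpha(1-2s_0-2n_0)$ at odd prime powers and $u\equiv n_0(1-2s_0)^{-1}$, $h\equiv 0$ at powers of $2$ (Table~\ref{params2}), and only afterwards computes $y$, $x$ and $g$; this is legitimate because Proposition~\ref{sparse-resol} supplies a filler for an \emph{arbitrary} class $y_0\pmod{M_1}$, and with $u-1$ a unit at odd primes no solvability obstruction can arise.

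The second gap is the one you yourself call ``the technical heart'': verifying that the IGDD you want to invoke satisfies (\ref{local-igdd}) and (\ref{global-igdd}). You conjecture that admissibility of $(n_0,s_0)$ ``should force exactly the congruence required,'' but without proving it Theorem~\ref{igdd-exist} gives you nothing to fill. The verification is short and is the paper's closing computation; in your notation the local conditions are automatic since $\alpha\mid g,h$, and for the global one,
\begin{align*}
(g^2-h^2)u(u-1) &= \alpha^2\,x(x+2h')\,u(u-1) = \alpha^2\,[xu]\,\bigl[x(u-1)+2h'(u-1)\bigr]\\
&= \alpha^2\, n\,\bigl(n+2s-(x+2y)\bigr) \equiv \alpha^2\, n_0\,(n_0+2s_0-1) \pmod{\gamma},
\end{align*}
using $h'(u-1)=s-y$ and the filler relation $x+2y\equiv 1\pmod{\gamma}$. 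Since $\beta=\alpha\gamma$ and $\gcd(\alpha,\gamma)=1$, the required condition $(g^2-h^2)u(u-1)\equiv 0\pmod{\beta}$ is therefore \emph{equivalent} to $n_0(n_0-1+2s_0)\equiv 0\pmod{\gamma}$, which is exactly the assumed admissibility of the class $(n_0,s_0)$ from Proposition~\ref{neccond-gdda-h}. So your instinct was right, but this identity --- not the largeness or interval-length arguments --- is the substance of the proposition, and it needed to be written down.
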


\begin{proof}
We  use Construction~\ref{igdd-fillgdd}  and IGGD$((g;h)^u,K)$ and fill in the groups 
with GDD$(\alpha^xy^1,K)$ from Prop ~\ref{sparse-resol} to construct a GDD$(\alpha^{ux}w^1,K)$ where 
$w=h(u-1) +\alpha y$ and $g-h=\alpha x$. Let $n=ux$ and $w=\alpha s$.

We get the following equations involving $u,h,n,s$.  First, 
$v=\alpha n+ w$ and $v-w = \alpha n =\alpha ux$. 
Substituting $x\equiv 1-2y \pmod{\gamma}$, this 
becomes
$\alpha ux \equiv \alpha u (1-2y) \pmod{\gamma}$.

Next use $\alpha y = w-h(u-1)$ to get 
$$\alpha n = \alpha ux \equiv \alpha u -2u (w-h(u-1)) \pmod{\gamma}.$$ 
Rewriting again using $w=\alpha s$, 
we get the following congruence on $n=ux,s,u,h$:
\begin{equation}
\label{u-cong}
\alpha n - \alpha u(1-2s) \equiv 2u(u-1)h \pmod{\gamma}.
\end{equation}
We want to find $u$ and $h$ such that $n\equiv n_0\pmod{M_1}$ and 
$s\equiv s_0 \pmod{M_1}$. (Note that $\gamma \mid M_1$.) 
So we consider (\ref{u-cong}) separately modulo each prime power $p^t$ such that $p^t \parallel M_1$. 
To solve (\ref{u-cong}), let us choose
\begin{equation*}
u \equiv \begin{cases}
\frac{1}{2} & \text{modulo odd prime powers}  \\
n_0(1-2s_0)^{-1}  & \text{modulo powers of 2}.
\end{cases}
\end{equation*}
In the first case for $u=\frac{1}{2}$, the congruence on $h$ becomes 
 $h\equiv \alpha(1-2s_0-2n_0)$.
For the case of powers of 2, the congruence for $h$ is even simpler: $h\equiv 0$.
Note that in this case, this gives us $y\equiv s_0$ and $x\equiv 1-2s_0$, 
and this means that $1-2s_0$ divides $n_0$ and $u$ is an integer.
Now we use the Chinese remainder theorem to provide  a simultaneous 
solution for $u$ and $h$ modulo $M_1$. 
We summarize the choice of parameters in Table~\ref{params2}.

\begin{table}[htbp]
$$\begin{array}{l|c|c}
&   \text{odd prime powers} & \text{powers of 2}    \\
\hline
h \equiv & \alpha(1-2s_0-2n_0)  & 0 \\
u \equiv & \frac{1}{2} & n_0(1-2s_0) ^{-1}
\end{array}$$
\begin{eqnarray*}
\alpha y &\equiv& \alpha s_0-(u-1)h \\
x &\equiv&  1-2y \\
% n_0 & \equiv & ux \\
g &\equiv&  \alpha x + h.
\end{eqnarray*}
\caption{Choice of parameters for congruence classes modulo primes dividing $M_1$.}
\label{params2}
\end{table}

Notice that we compute a large integer $y \pmod{M_1}$ from $u$ and $h$ 
and then use $y$ to find $x$ and $g$. 
This set-up allows us to  make $n$ and $w$ (or $s$) arbitrarily large 
by increasing the choice of $y$. 

We now check that the necessary conditions for the existence of 
IGGD$((g;h)^u,K)$ are satisfied. 
Since $w\equiv 0\pmod{\alpha}$ and $w=\alpha s =  \alpha y + h(u-1)$, 
we have $(u-1)h \equiv 0\pmod{\alpha}$. 
Since $g(u-1) = \alpha x(u-1) +h(u-1)$, 
we also have $(u-1)g \equiv 0\pmod{\alpha}$. 
So (\ref{local-igdd}) holds. 
To verify (\ref{global-igdd}),  we first compute 
\begin{eqnarray*}
(g+h)(u-1) & = & (u-1)(g-h) + 2h(u-1) \\
                 & =  & (u-1)(\alpha x) + 2\alpha s_0 - 2 \alpha y \\
                 & = & \alpha ux - \alpha x + 2\alpha s_0 - 2\alpha y \\
                 & \equiv & \alpha n_0 + 2\alpha s_0 - \alpha (x+2y) \\
                 & \equiv & \alpha n_0  + 2\alpha s_0 - \alpha \pmod{\gamma} \\
                 & \equiv & \alpha  (n_0-1) + 2w_0 \pmod{\gamma}.
\end{eqnarray*}
This gives us 
\begin{eqnarray*}
(g^2-h^2)u(u-1) ) & = &  (g-h) u (g+h)(u-1) \\
   & \equiv & (\alpha n_0) (\alpha (n_0-1) + 2w_0) \pmod{\gamma} \\
   & \equiv & 0 \pmod{\beta}.
\end{eqnarray*}

To see the last step, we note that $ (\alpha (n_0-1) + 2w_0) \equiv 0 \pmod{\gamma}$ 
is necessary condition (\ref{global-gdd}) for the existence of a GDD$(\alpha^{n_0}{w_0}^1,K)$. 
Thus, for sufficiently large $u$, the IGDD$((g;h)^u,K)$ required for 
Construction~\ref{igdd-fillgdd} exists and we can construct 
GDD$(\alpha^n(\alpha s)^1,K)$ with $n\equiv n_0 \pmod{M_1}$ and 
$s\equiv s_0 \pmod{M_1}$. 
\end{proof}

We are now in a position to prove an asymptotic result for GDD$(\alpha^nw^1,K)$ for 
fixed $w$.

\begin{thm}
\label{fixed-hole}
Given $w\equiv 0\pmod{\alpha(K)}$, there exist GDD$(\alpha^nw^1,K)$ for all 
sufficiently large $n$ satisfying $\gamma \mid n(\alpha(n-1) + 2w)$.
\end{thm}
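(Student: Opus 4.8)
The plan is to fix $w=\alpha s_0$ with $w\equiv 0\pmod{\alpha}$ and to treat each admissible residue class of $n$ modulo $M_1$ separately. Since $\gamma\mid M_1$, the necessary condition $\gamma\mid n(\alpha(n-1)+2w)$, which (after dividing by $\alpha$ and using $\gcd(\alpha,\gamma)=1$) is just $\gamma\mid n(n-1+2s_0)$, depends only on $n\bmod M_1$; it therefore singles out finitely many admissible classes $n_0\pmod{M_1}$. The class $n_0\equiv 0$ is immediate, since Proposition~\ref{sparse-class1} already produces a GDD$(\alpha^nw^1,K)$ for every large $n\equiv 0\pmod{M_1}$ with this fixed $w$. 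It remains to handle the other admissible classes, and this is where the work lies.

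For the remaining classes a direct appeal to Construction~\ref{igdd-fillgdd} is not enough: there the resulting hole is $w=h(u-1)+\alpha y$, and keeping $w$ fixed and small while $u$ grows (as the underlying IGDD requires) forces $h=0$, hence $y=s_0$ and a filling ingredient GDD$(\alpha^x w^1,K)$ with $x\equiv 0\pmod{M_1}$, so that $n=ux\equiv 0\pmod{M_1}$ again. A genuinely different idea is needed, namely a \emph{hole-reduction}: first build a GDD whose hole is large but lies in the correct class, then shrink that hole down to $w$. Concretely, for a target $n\equiv n_0$ I would write $n=N+M$ and fill the $W$-group of a GDD$(\alpha^NW^1,K)$ with a GDD$(\alpha^Mw^1,K)$, where $W=\alpha M+w$; replacing the $W$-group by $M$ groups of size $\alpha$ together with a residual hole of size $w$ covers exactly the pairs previously uncovered inside the $W$-group, and the cross pairs between the $W$-group and the $\alpha^N$ groups are already covered, so the result is a GDD$(\alpha^{N+M}w^1,K)$. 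The filling ingredient GDD$(\alpha^Mw^1,K)$ is supplied by Proposition~\ref{sparse-class1} as soon as $M\equiv 0\pmod{M_1}$ is large, and the relation $W=\alpha M+w$ indeed forces $M\equiv 0\pmod{M_1}$; the base GDD$(\alpha^NW^1,K)$, with $N\equiv n_0$ and $W/\alpha\equiv s_0\pmod{M_1}$, is supplied by Proposition~\ref{fixed-arb}. A short check shows that the admissibility of $(N,W)$ for the base reduces modulo $\gamma$ (using $M\equiv 0$) to the target condition $\gamma\mid n(n-1+2s_0)$, so no new arithmetic obstruction is introduced.

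The main obstacle is upgrading the output of Proposition~\ref{fixed-arb} from \emph{infinitely many} admissible parameters to \emph{all sufficiently large} $n$ in the class. The realizable bases are governed by two free large parameters, the number of groups $u$ of the auxiliary IGDD and the hole-growth parameter $y$ coming from Proposition~\ref{sparse-resol} (with $x$ tied to $y$ through the chosen prime-order ingredient $q$). Tracing these through the filling, the attainable values take the shape $n=A_u\,y+B_u$ with $A_u,B_u$ linear in $u$, so for each fixed $u$ one obtains an arithmetic progression in the class $n_0$, but with common difference growing in $u$. The crux is to show that the union of these progressions, as $u$ ranges over its admissible class, is cofinite in $n_0\pmod{M_1}$. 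I expect to settle this by a Diophantine/covering argument exploiting that the differences $A_u$ themselves form an arithmetic progression, or, more robustly, by interposing a transversal-design product step (Theorem~\ref{asymptotic-td} together with Lemma~\ref{wfc-lemma}) to bridge between consecutive realizable values; either way this density step, rather than the constructions themselves, is the real content of the proof.
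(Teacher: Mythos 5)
Your building blocks are the right ones---the group-filling operation, Proposition~\ref{sparse-class1}, and Proposition~\ref{fixed-arb}---and your congruence bookkeeping is correct (that $W=\alpha M+w$ forces $M\equiv 0\pmod{M_1}$, and that admissibility of the target transfers to the base modulo $\gamma$). But the proof is genuinely incomplete, at exactly the point you flag yourself. In the two-piece decomposition $n=N+M$, the filling size $M$ is rigidly tied to the base's hole size by $W=\alpha M+w$, so each output pair $(N,W)$ of Proposition~\ref{fixed-arb} realizes exactly one value $n=N+(W-w)/\alpha$. Since that proposition guarantees only \emph{infinitely many} pairs, with no control over which ones, your construction yields only infinitely many $n$ in each admissible class, not all sufficiently large $n$. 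The ``density step'' you defer---either the claimed structure $n=A_u y+B_u$ of the realizable values together with cofiniteness of the union of those progressions, or the transversal-design bridging via Theorem~\ref{asymptotic-td} and Lemma~\ref{wfc-lemma}---is the entire difficulty, and neither sketch is carried out; it is not at all clear either one can be pushed through, and this is not how the paper proceeds.

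The paper avoids any density analysis of Proposition~\ref{fixed-arb}'s output by restructuring the construction with \emph{three} pieces instead of two: write $n=n_1+n_2+n_3$ with $n_1,n_3\equiv 0$ and $n_2\equiv n_0\pmod{M_1}$. A middle design GDD$(\alpha^{n_2}(\alpha s_2)^1,K)$ from Proposition~\ref{fixed-arb} and an inner design GDD$(\alpha^{n_3}w^1,K)$ from Proposition~\ref{sparse-class1}, with $n_3=s_2-s$, are chosen \emph{once and for all} in the given class; then they are wrapped in an outer design GDD$(\alpha^{n_1}w_1^1,K)$ from Proposition~\ref{sparse-class1}, whose hole size $w_1=\alpha(n_2+s_2)$ is fixed and whose group count $n_1=n-n_2-n_3$ may be \emph{any} sufficiently large multiple of $M_1$. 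Filling the outer hole with the middle design, and the middle design's hole with the inner design, gives GDD$(\alpha^n w^1,K)$ for every sufficiently large admissible $n$. The idea you are missing is where to deploy the ``all sufficiently large $u\equiv 0\pmod{M_1}$, arbitrary fixed hole'' strength of Proposition~\ref{sparse-class1}: not only as an inner filling, but as an outer wrapper that absorbs all of the growth in $n$, so the uncontrollable ingredient from Proposition~\ref{fixed-arb} never has to vary at all.
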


\begin{proof}

Let $w=\alpha s$ and take a large $n$ as above. We would like 
to construct a GDD$(\alpha^nw^1,K)$. Let $n=n_1 +n_2 +n_3$ where 
$n_1\equiv 0\pmod{M_1}$, $n_2\equiv n_0 \pmod{M_1}$, and $n_3\equiv 0\pmod{M_1}$,
and the $n_i$ are all sufficiently large for $i=1,2,3$.
We first use Proposition~\ref{sparse-class1} to construct a GDD$(\alpha^{n_1}w_1^1,K)$ 
where $w_1 =\alpha n_2 + \alpha s_2$ and $s_2\equiv s \pmod{M_1}$. Next we 
use Proposition~\ref{fixed-arb} to construct a GDD$(\alpha^{n_2}(\alpha s_2)^1,K)$ 
where $n_2\equiv n_0 \pmod{M_1}$ and $s_2\equiv s\pmod{M_1}$ and replace the 
fixed group of size $w_1$ with this GDD. This gives us a
GDD$(\alpha^{n_1 + n_2}(\alpha s_2)^1,K)$. Now let $\alpha s_2 =\alpha n_3 + \alpha s$, 
and once again use Proposition~\ref{sparse-class1}. The resulting design is a 
GDD$(\alpha^{n_1+n_2+n_3}(\alpha s)^1,K)$ or a GDD$(\alpha^nw^1,K)$.
\end{proof}

\section{Recursion}

To emulate the recursive construction strategy in \cite{DLLholes}, we require six ingredient designs.  In the present setting, these are `consecutive' GDDs of type $\alpha^t h^1$ and $\alpha^{t+1} h^1$ for each of three different sizes for $h$: two nearby fixed values and one large value (that depends on $t$). The case of large $h$ is treated next.  

\begin{lemma}
\label{xstar}
Let $K \subseteq \Z_{\ge 2}$ with $k=\min K$, and let $\epsilon>0$.
There exist $K$-GDDs of type $\alpha^t (\alpha x_\star)^1$ and $\alpha^{t+1} (\alpha x_\star)^1$ for some integers $t \equiv -1 \pmod{\gamma}$ and $x_\star$ satisfying
$t/x_\star  < k-2+\epsilon$.
\end{lemma}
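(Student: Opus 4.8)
The plan is to realize both designs as projective extensions of the class-uniformly resolvable GDDs produced by the machinery of Section~\ref{curds}. Recall the effect of such an extension. Suppose $(V,\Pi,\cB)$ is a resolvable $G_{K,\theta}$-GDD of type $\alpha^u$, so that each block of $\cB$ is a clique of some size $k_i-1$ with $k_i\in K$, and $\cB$ is partitioned into $r$ parallel classes $P_1,\dots,P_r$. Introduce new points $\infty_1,\dots,\infty_r$ and replace each clique $B$ lying in class $P_j$ by $B\cup\{\infty_j\}$. Every resulting block has size $k_i\in K$; each pair $\{\infty_j,x\}$ with $x\in V$ is covered exactly once (in the unique block of $P_j$ meeting $x$); the pairs $\{\infty_j,\infty_{j'}\}$ are never covered; and cross-group pairs of $V$ remain covered once while within-group pairs remain uncovered. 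Adjoining $\{\infty_1,\dots,\infty_r\}$ as a new group thus yields a GDD$(\alpha^u r^1,K)$ whose large group has size equal to the number of parallel classes. Choosing the subset of block sizes so that $k_1=k=\min K$, the construction of Section~\ref{curds} (with group size $\alpha$) gives $r \ge \alpha(u-1)(1-\theta)/(k-2)$, indeed $r=\alpha^2(u-1)p/(2m)$ in the notation there. Writing the hole as $\alpha x_\star$ with $x_\star=r/\alpha$ (note $\alpha\mid r$ by Proposition~\ref{neccond-gdda-h}, since the design exists), we get
$$\frac{u}{x_\star}\;=\;\frac{\alpha u}{r}\;\le\;\frac{u}{u-1}\cdot\frac{k-2}{1-\theta},$$
which is below $k-2+\epsilon$ once $\theta$ is small and $u$ is large.

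This supplies the near-extremal design with $t=u$ groups. The difficulty is the companion design having one additional small group but the \emph{identical} hole size $\alpha x_\star$. Because the existence of a $G_{K,\theta}$-CURD forces $u\equiv 0\pmod p$, a single graph cannot serve both $u=t$ and $u=t+1$. I would therefore select two admissible graphs $G^{(1)}=G_{K,\theta_1}$ and $G^{(2)}=G_{K,\theta_2}$, with parameters $(p_1,m_1)$ and $(p_2,m_2)$, to be used on $t$ and $t+1$ groups respectively, and arrange that their parallel-class counts agree:
$$r\;=\;\frac{\alpha^2(t-1)p_1}{2m_1}\;=\;\frac{\alpha^2 t\,p_2}{2m_2},\qquad\text{equivalently}\qquad (t-1)p_1 m_2 = t\,p_2 m_1.$$
The freedom afforded by Schur's and Dirichlet's theorems in Section~\ref{curds} lets us choose the clique multiplicities of $G^{(1)},G^{(2)}$ over a large family of values with $2m_i/(\alpha p_i)$ as close to $k-2$ as desired; solving the displayed relation then pins an admissible value of $t$. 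Simultaneously I would impose, by the Chinese remainder theorem, the residue conditions needed for both CURDs to exist, namely $t\equiv 0\pmod{p_1}$, $t\equiv 1\pmod{\alpha^*_1}$, $t+1\equiv 0\pmod{p_2}$, $t+1\equiv 1\pmod{\alpha^*_2}$, together with $t\equiv -1\pmod{\gamma}$. Projective extension of the two CURDs then produces GDD$(\alpha^t(\alpha x_\star)^1,K)$ and GDD$(\alpha^{t+1}(\alpha x_\star)^1,K)$ with a common $x_\star$.

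The main obstacle is exactly this matching step: the integer $t$ is constrained both by the Diophantine equality $(t-1)p_1 m_2 = t\,p_2 m_1$, which forces $t=p_1 m_2/(p_1 m_2-p_2 m_1)$, and by the several residue conditions above, and these must be rendered consistent while keeping $2m_i/(\alpha p_i)$ near $k-2$ so the bound $t/x_\star<k-2+\epsilon$ survives. I would resolve it by first fixing $\theta$ small enough that $\tfrac{k-2}{1-\theta}<k-2+\tfrac{\epsilon}{2}$, then exploiting the abundance of valid graphs to make $p_1 m_2-p_2 m_1$ equal a prescribed small positive value dividing $p_1 m_2$ (so that $t$ is forced large), and finally absorbing the residue requirements into the remaining choice of clique multiplicities and of the underlying primes $p_1,p_2$, which Dirichlet's theorem permits us to place in prescribed arithmetic progressions. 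As a fallback for one of the two designs, since its hole $w=\alpha x_\star$ is now fixed, Theorem~\ref{fixed-hole} could instead furnish GDD$(\alpha^t w^1,K)$ directly whenever $t$ exceeds the $w$-dependent threshold there, which holds as long as that threshold grows sublinearly in $w$.
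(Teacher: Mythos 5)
Your opening paragraph is sound and coincides with one of the paper's ingredient steps: extending each parallel class of the $G_{K,\theta}$-CURD of Section~\ref{curds} by a new point does yield a GDD$(\alpha^u r^1,K)$ with $r \ge \alpha(u-1)(1-\theta)/(k-2)$, hence $u/x_\star < k-2+\epsilon$ for small $\theta$ and large $u$. The genuine gap is in the matching step, which is the real content of the lemma. The paper never tries to force two resolvable designs to have the same number of parallel classes; instead it feeds the resolvable ingredients (a resolvable $\{k-1\}$-GDD extension on $t_1$ groups, a CURD extension on $t_2$ groups, plus fixed-hole GDDs from Theorem~\ref{fixed-hole}) into Wilson's Fundamental Construction applied to transversal designs TD$(t_i+1,n_i)$, where $n_1t_1-n_2t_2=1$ produces the consecutive group counts $t+1=n_1t_1$ and $t=n_2t_2$, and the hole of each output GDD is a \emph{sum} of $n_i$ individually chosen weights plus a filling parameter $z_i$. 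Equating the two holes is then a problem with many degrees of freedom ($\nu_i,\mu_i,z_i$), solved by approximation plus fine-tuning modulo $\gamma$. Your scheme is rigid: equality of the class counts pins $t=p_1m_2/d$ exactly, where $d=p_1m_2-p_2m_1$, and this one value must simultaneously satisfy $t\equiv 0\pmod{p_1}$, $t\equiv 1\pmod{\alpha^*_1}$, $t+1\equiv 0\pmod{p_2}$, $t+1\equiv 1\pmod{\alpha^*_2}$, $t\equiv -1\pmod{\gamma}$, with moduli that are themselves determined by the graphs. You do not show this system is consistent, and in fact it is obstructed: since $t+1=(2p_1m_2-p_2m_1)/d$, the condition $p_2\mid t+1$ forces $p_2\mid 2p_1m_2$, hence $p_2\mid 2m_2$ (the primes $p_1,p_2$ must be distinct, or else $t\equiv 0$ and $t\equiv -1\pmod{p_1}$ collide). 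But Section~\ref{curds} arranges precisely that $2m_2$ be \emph{coprime} to $p_2$ --- that is how $\gcd(n,\alpha^*)=1$ is secured, which is both a hypothesis of Theorem~\ref{grgdd} and needed for the congruences $u\equiv 0\pmod{p_2}$, $u\equiv 1\pmod{\alpha^*_2}$ to be simultaneously solvable. So the matching step, as you describe it, cannot be carried out with the CURD machinery you are invoking.

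The fallback does not repair this. Theorem~\ref{fixed-hole} is a fixed-hole asymptotic statement: for each $w$ it provides GDD$(\alpha^n w^1,K)$ only for $n\ge n_0(w)$, and neither its proof nor the existence results it rests on give any effective bound on $n_0(w)$, let alone sublinear growth in $w$. In Lemma~\ref{xstar} the hole is $w=\alpha x_\star\approx \alpha t/(k-2)$, growing linearly in $t$, so the hypothesis ``$t$ exceeds the $w$-dependent threshold'' is exactly what cannot be guaranteed. Indeed, if such control over $n_0(w)$ were available, Lemma~\ref{xstar} (and with it most of the recursion in Section 5) would follow immediately from Theorem~\ref{fixed-hole}, and no resolvable or class-uniformly resolvable ingredients would be needed at all.
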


\begin{proof}
We begin by setting up some ingredient designs.  First, extend parallel classes of a resolvable GDD$(\alpha^{t_1},\{k-1\})$ to produce a GDD$(\alpha^{t_1} (\alpha y_\sharp)^1,\{k\})$, where  $y_\sharp = (t_1-1)/(k-2)$.  
Next,  extend a class-uniformly resolvable GDD based on a resolvable $G_{K,\theta}$-GDD of type $\alpha^{t_2}$; see Section~\ref{curds}.  This yields a GDD$(\alpha^{t_2} (\alpha y_\star)^1,K)$, where $y_\star = (1-\theta)(t_2-1)/(k-2)$.  Here, $\theta$ is a parameter chosen sufficiently small and in terms of $\epsilon$.

We examine properties of $t_1$ and $t_2$.  First, recall the  necessary conditions $k-1 \mid \alpha t_1$ and $k-2 \mid \alpha (t_1-1)$ for existence of the required resolvable GDD.  
The congruence conditions on $t_2$ for existence of the required  class-uniformly resolvable GDD are $\alpha^*(G_{K,\theta}) \mid t_2-1$ and $p \mid t_2$, where $p:=|V(G_{K,\theta})|$ is, say, chosen to be a large prime.
For technical reasons to follow, it is convenient to choose $t_2$ coprime with $\gamma t_1$.  This can be accomplished with a selection $t_2 \equiv 1 \pmod{\mathrm{lcm}(\gamma,\frac{k-1}{\alpha},\alpha^*(G_{K,\theta}))}$, noting that $\frac{k-1}{\alpha} \mid t_1$ from above.  Since $p$ is coprime with $\alpha^*$ and can be chosen larger than the other modulus, we have from the Chinese Remainder Theorem (an increasing sequence of) solutions to these simultaneous congruences on $t_2$.
Finally, let us additionally choose both $t_i$ sufficiently large so that, by Theorem~\ref{fixed-hole}, there exist GDD$(\alpha^{t_i}(\alpha y_i)^1,K)$ and GDD$(\alpha^{t_i}(\alpha y_i+\beta)^1,K)$, $i=1,2$, where $y_1$ and $y_2$ are fixed small hole sizes to be determined later.

Since $\gcd(\gamma t_1,t_2)=1$, it follows that we may choose integers $n_1$ and $n_2$ such that $n_1t_1 - n_2 t_2 = 1$ and $\gamma \mid n_1$.    We can ensure that the $n_i$ are sufficiently large for the existence of transversal designs TD$(t_i+1,n_i)$ and also GDD$(\alpha^{n_i} (\alpha z_i)^1,K)$ for fixed integers $z_1$ and $z_2$ to be determined later.

Apply Wilson's Fundamental Construction to the TD$(t_i+1,n_i)$ using weight $\alpha$ on all points in the first $t_i$ groups.  On the last group of the TD with $i=1$, use weights $\alpha y_1,\alpha y_1+\beta$ and $\alpha y_\sharp$.  On the last group of the TD with $i=2$, use weights $\alpha y_2,\alpha y_2+\beta$ and $\alpha y_\star$.  Replace blocks of the TD with the above ingredient GDDs, as appropriate.  We additionally fill the resulting first $t_i$ groups with GDD$(\alpha^{n_i} (\alpha z_i)^1,K)$ using $\alpha z_i$ new points.  These join the last group to act as a hole.  With $t:=n_2t_2$, the resulting GDDs have types $\alpha^{t} (\alpha x_\star)^1$ and $\alpha^{t+1} (\alpha x_*)^1$, where each hole size is a combination of the respective weights.  It remains to show that the weights can be chosen so that the hole sizes align ($x_\star=x_*$), while additionally being relatively large ($(t-1)/x_\star < k-2+\epsilon$).

In what follows, we describe the selection of weights with integer parameters $\mu_i,\nu_i$ (to be determined later) satisfying $0 \le \mu_i \le \nu_i \le n_i$.  We use $\mu_i$ weights equal to $\alpha y_i+\beta$, $\nu_i-\mu_i$ weights equal to $\alpha y_i$, and $n_i-\nu_i$ large weights ($\alpha y_\sharp$ and $\alpha y_\star$ for $i=1,2$, respectively).  The overall hole sizes are, after weighting and filling, $\alpha x_* = (n_1-\nu_1) \alpha y_\sharp + \nu_1 \alpha y_1 + \mu_1 \beta + \alpha z_1$ and 
$\alpha x_\star = (n_2-\nu_2) \alpha y_\star + \nu_2 \alpha y_2 + \mu_2 \beta + \alpha z_2$.

Equating hole sizes, the condition $x_*=x_\star$ amounts to 
$$(n_1-\nu_1) y_\sharp + \nu_1 y_1 + \mu_1 \gamma + z_1 = (n_2-\nu_2) y_\star + \nu_2 y_2 + \mu_2 \gamma + z_2,$$
or, rearranging,
\begin{equation}
\label{equal-holes}
\nu_1 (y_\sharp-y_1) - \nu_2(y_\star-y_2) =  n_1 y_\sharp - n_2 y_\star + (\mu_1-\mu_2) \gamma + z_1 -z_2.
\end{equation}
We show that a selection of weights achieving (\ref{equal-holes}) is possible by approximately equating the left side to the first terms on the right, and then using $\mu_i$ and $z_i$ for `fine tuning'.
To this end, let
$$\nu_1 = \left\lceil \frac{n_2(1+\theta(t_2-1))}{(k-2)(y_\sharp-y_1)} \right\rceil~~\text{and}~~
\nu_2  =\left\lceil \frac{n_1}{(k-2)(y_\star-y_2)} \right\rceil.$$
We estimate $\frac{\nu_1}{n_1} \approx \frac{n_2}{n_1} (\frac{1+\theta t_2}{t_1}) \approx \frac{1+\theta t_2}{t_2} = \theta + \frac{1}{t_2}$.  It follows that this ratio is arbitrarily small as a function of $\theta$ (since in particular $t_2 \gg 1/\theta$).  A similar calculation gives 
$\frac{\nu_2}{n_2} \approx \frac{t_2}{t_1 (1-\theta)(t_2-)} \approx \frac{1}{t_1}.$ 
Since $\theta$ is small and $n_1/t_2 \approx n_2/t_1$, we have $\nu_i \ll n_i$ for each $i$. 
% $\frac{\nu_2}{n_2} \rightarrow 0$ as $\theta \rightarrow 0$.

The dominant term in the right side of (\ref{equal-holes}) is, say, 
$$D:=n_1 y_\sharp - n_2 y_\star = \frac{1}{k-2} [1-n_1+n_2+\theta n_2(t_2-1) ],$$
where in the last step we use $n_1t_1-n_2t_2=1$.
After a calculation using the definition of $\nu_i$,
\begin{equation}
\label{hole-diff-est}
|\nu_1 (y_\sharp-y_1) - \nu_2 (y_\star-y_2) - D| \le \max(y_\sharp,y_\star),
\end{equation}
a quantity independent of the $n_i$.

The fine-tuning is accomplished, then, by selecting nonnegative integer weights $\mu_i$ 
for (\ref{equal-holes}) 
so that $(\mu_1-\mu_2)\gamma$ is the smallest multiple of $\gamma$ not less than
$\nu_1 (y_\sharp-y_1) - \nu_2 (y_\star-y_2) - D$.  For this purpose, one of $\mu_1$ or $\mu_2$ could equal zero, according to the required sign.
The key point is that, by our estimate (\ref{hole-diff-est}) 
and our choice of $n_i$ sufficiently large, we can ensure $\mu_i \leq \nu_i$ for both $i=1,2$.

Now, by our assumption that $\gamma \mid n_1$, we have the existence of GDD$(\alpha^{n_1} (\alpha z_1)^1,K)$ with no congruence restriction on $z_1$ whatsoever.  
It follows that there is a choice of $z_1$, an integer in $\{0,1,\dots,\gamma-1\}$, so that (\ref{equal-holes}) holds modulo $\gamma$.
With only finitely many possibilities ahead of time, we could take $n_1$ sufficiently large so that this GDD exists for any choice of $z_1$.  

If in the above construction, we select the $t_i$ sufficiently large and $0 < \theta \ll \zeta \ll \epsilon$, we can ensure that $\frac{\nu_i}{n_i} < \zeta$.  Then we have
\begin{equation*}
\frac{t}{x_\star} = \frac{t_2n_2}{(n_2-\nu_2) y_\star + \nu_2 y_2 + \mu_2 \gamma + z_2}
< \frac{t_2(k-2)}{(1-\zeta)(1-\theta)(t_2-1)}
< k-2+\epsilon,
\end{equation*}
as desired.
\end{proof}

\rk Observe that the number of groups $t$ can be forced arbitrarily large by choosing $\epsilon$ sufficiently small.

Now, following \cite[\S 5]{DLLholes}, we obtain a versatile family of GDDs by combining the preceding ingredient designs.

\begin{lemma}
\label{ABC}
Let $K \subseteq \Z_{\ge 2}$ with $k=\min K$, and let $\epsilon>0$.  For some integer $t$, there exists a $K$-GDD of type $(\alpha A)^t (\alpha B)^1 (\alpha C)^1$ for all sufficiently large integers $A,B,C$ satisfying $\gamma \mid A,C$ and $B \le A \le C \le At/(k-2+\epsilon)$.
\end{lemma}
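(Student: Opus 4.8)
The plan is to inflate a truncated transversal design by Wilson's Fundamental Construction (Lemma~\ref{wfc-lemma}), using the consecutive large-hole GDDs of Lemma~\ref{xstar} and the small-hole GDDs of Theorem~\ref{fixed-hole} as block fills.  First I would apply Lemma~\ref{xstar} with a parameter $\epsilon'<\epsilon$ small enough that, by the remark following it, the resulting number of groups $t$ (satisfying $t\equiv -1\pmod\gamma$) is as large as required below, while the bound $t/x_\star<k-2+\epsilon'<k-2+\epsilon$ is retained.  This yields $K$-GDDs $D_t,D_{t+1}$ of types $\alpha^t(\alpha x_\star)^1$ and $\alpha^{t+1}(\alpha x_\star)^1$.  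With $t$ large, Theorem~\ref{fixed-hole} then supplies $K$-GDDs of types $\alpha^t(\alpha w)^1$ and $\alpha^{t+1}(\alpha w)^1$ for each of the finitely many fixed small hole parameters $w$ used below.

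As master design I would take a TD$(t+2,A)$, which exists for all large $A$ by Theorem~\ref{asymptotic-td}.  Designate $t$ groups as uniform, one as the $B$-group, and one as the $C$-group; truncate the $B$-group to any $B\le A$ of its points and leave the remaining $t+1$ groups intact.  Now apply Wilson's Fundamental Construction with weight $\alpha$ on every point of the uniform and $B$-groups and weight $\alpha w_j$ on the $j$-th point of the $C$-group.  Because the $C$-group is never truncated, each block has either $t+1$ points (its $B$-point deleted) or $t+2$ points (its $B$-point surviving), to be filled respectively by a GDD of type $\alpha^t(\alpha w_j)^1$ or $\alpha^{t+1}(\alpha w_j)^1$.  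Reading off the group sums, the output is a $K$-GDD of type $(\alpha A)^t(\alpha B)^1(\alpha C)^1$ with $C=\sum_j w_j$, provided every $w_j$ admits both fills.

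The requirement that each $w_j$ supply both fills is the one genuine constraint.  Writing the hole as $\alpha w$ and cancelling the factor $\alpha$ (permissible as $\gcd(\alpha,\gamma)=1$), the global condition of Theorem~\ref{fixed-hole} becomes $\gamma\mid n((n-1)+2w)$; since $t\equiv -1\pmod\gamma$ we have $\gcd(t,\gamma)=1$ and $t+1\equiv 0\pmod\gamma$, so the type-$(t+1)$ fill exists for every $w$, while the type-$t$ fill exists exactly when $w\equiv 1\pmod{\gamma/\gcd(2,\gamma)}$.  The admissible weights thus form a single residue class, which contains $w=1$ (its fills are the uniform GDDs $\alpha^{t+1},\alpha^{t+2}$, existing by Theorem~\ref{asym-gdd} as $t+1\equiv 0$) and, automatically, $w=x_\star$ (since $D_t$ exists and so already obeys this necessary condition).

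Finally I would choose the multiset $\{w_j\}$ to realize a prescribed admissible $C$.  Here the large hole of Lemma~\ref{xstar} is indispensable: small-hole fills keep every $w_j=O(1)$ and cannot raise the average $C/A$ to the target $t/(k-2+\epsilon)$, whereas assigning weight $x_\star$ to a fraction $f=(C/A-1)/(x_\star-1)$ of the $C$-points does, precisely because $x_\star>t/(k-2+\epsilon)$ forces $f<1$.  Giving the remaining points admissible small weights and toggling them in steps of $\gamma/\gcd(2,\gamma)$ tunes the sum; since every admissible weight is $\equiv 1$ in that modulus and $\gamma\mid A$, the attainable $C$ are exactly the multiples of $\gamma$ in an interval $[A,Ax_\star)$ that contains the required range $[A,At/(k-2+\epsilon)]$ once $A$ is large, and truncation gives any $B\le A$ for free (one checks that $\gamma\mid A$ and $\gamma\mid C$ alone force the global divisibility of the output type, with no condition on $B$).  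I expect the main obstacle to be exactly this last step: hitting every admissible $C$ while keeping each chosen weight inside the single residue class guaranteeing both fills, and checking that the number of small-weight toggles needed stays within the supply of $C$-points as $A\to\infty$.
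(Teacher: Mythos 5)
Your proposal is correct and takes essentially the same route as the paper's own proof: both inflate a TD$(t+2,A)$ with one group truncated to $B$ points via Wilson's Fundamental Construction, putting weight $\alpha$ everywhere except the last group, a mix of bounded weights and $\alpha x_\star$ on the last group, filling blocks of sizes $t+1,t+2$ with the consecutive GDDs of Lemma~\ref{xstar} and their small-hole companions (Theorem~\ref{fixed-hole}, Theorem~\ref{asym-gdd}), and then covering every admissible $C$ by an arithmetic-progression overlap/toggling argument. The only differences are cosmetic: the paper fixes the small weights to $\{\alpha,\alpha+\beta\}$ and toggles in steps of $\gamma$ (its progressions $\Gamma_i$), whereas you toggle in steps of $\gamma/\gcd(2,\gamma)$ and make explicit the residue-class computation showing which hole parameters admit both fills.
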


\begin{proof}
Using Lemma~\ref{xstar}, let $t \equiv -1 \pmod{\gamma}$ and $x_*$ be chosen so that there exist $K$-GDDs of each of the following types:
$$
\begin{array}{lll}
\alpha^{t+1},& \alpha^t (\alpha+\beta)^1,& \alpha^t (\alpha x_*)^1,\\
\alpha^{t+2},& \alpha^{t+1} (\alpha+\beta)^1, &\alpha^{t+1} (\alpha x_*)^1,
\end{array}$$
where additionally $t/x_* < k-2+\epsilon/2$.
 
Take a TD$(t+2,A)$ for large $A$, where $\gamma \mid A$, and truncate $A-B$ points from the second last group.  Give weight $\alpha$ to all points not in the last group, and weights in $\{\alpha, \alpha+\beta, \alpha x_*\}$ to points of the last group.  Use Wilson's Fundamental Construction, replacing blocks of the truncated TD, whose sizes are in $\{t+1,t+2\}$, with $K$-GDDs of the types given above.
The result is a $K$-GDD of type $(\alpha A)^{t} (\alpha B)^1 (\alpha C)^1$, where $C$ is any sum of $A$ terms from the set $\{1,1+\gamma,x_*\}$.
 
It remains to analyze the possible values of $C$.  
%Consider a sum of $A$ terms from the set $\{1,1+\gamma,x_*\}$.  
For $0 \le i \le A$, put $$\Gamma_i := \{i+(A-i)x_*,i+(A-i)x_*+\gamma,\dots,i+(A-i)x_*+\gamma i\},$$ the arithmetic progression of possible sums $C$ in which exactly $A-i$ of the summands equal $x_*$.
%It follows that that $\Gamma_{i-1}$ and $\Gamma_i$ overlap provided $i-1+(A-i+1)x_* \le i(1+\gamma)+(A-i)x_*$, or $x_* \le 1+\gamma i$.  
 By comparing endpoints, $\Gamma_i \cap \Gamma_{i-1} \neq \emptyset$ for indices $i$ in the range $(x_*-1)/\gamma \le i \le A$.  It follows that the realizable values of $C$ cover the arithmetic progression $\{A,A+\gamma,A+2\gamma,\dots,D\}$, where 
\begin{equation}
\label{D-est}
D > (A-(x_*-1)/\gamma) x_* = \frac{At}{k-2+\epsilon/2} \left[ 1- \frac{x_*-1}{\gamma A} \right].
\end{equation}
For sufficiently large $A \gg t$, the right side of (\ref{D-est}) can be made larger than $At/(k-2+\epsilon)$.
\end{proof}

We are now in a position to prove our main result on incomplete pairwise balanced designs.

\begin{proof}[Proof of Theorem~\ref{main}]
%Let $0<\epsilon' < \epsilon$.
Suppose we are given large integers $v$ and $w$ satisfying the divisibility conditions (\ref{local}) and (\ref{global}) for IPBDs, and additionally satisfying $v > (k-1+\epsilon)w$.  Write $v-w=\alpha (tA+B)$ and $w=\alpha (C + z) + 1$, where $\gamma \mid A,C$, $z \in \{0,1,\dots,\gamma-1\}$, and $t$ is chosen as in Lemma~\ref{ABC} with $\epsilon/2$ taking the role of $\epsilon$.
We may assume $A,B,C$ are chosen sufficiently large so that, by Theorem~\ref{fixed}, there exist both
IPBD$((\alpha (A+z)+1;\alpha z+1),K)$ and IPBD$((\alpha (B+z)+1;\alpha z+1),K)$ for each of the $\gamma$ possible values of $z$.
Take the $K$-GDD of Lemma~\ref{ABC}, add $\alpha z+1$ new points, and fill all but the last group with the above IPBDs. 
The result is an IPBD$((v;w),K)$, where observe that the added points join the last group of our GDD to become the hole.  

There are finitely many values of $w$ that our construction does not cover; for each, we invoke Theorem~\ref{fixed} to get existence of IPBD$((v;w),K)$ for sufficiently large $v$.
\end{proof}

\section{Applications}
\label{ipbd-applications}

\subsection{Arbitrary index $\lam$}

We consider here an extension of IPBDs which allows multiply-covered pairs of points.
Let $v,w,K$ be as before, and let $\lambda$ be a nonnegative integer.  An \emph{incomplete pairwise balanced design of index} $\lam$, denoted IPBD$_\lam((v;w),K)$, is a triple
$(V,W,\cB)$ where
\begin{itemize}
\item
$V$ is a set of $v$ points and $W \subset V$ is a \emph{hole} of size $w$;
\item
$\cB \subseteq \cup_{k \in K} \binom{V}{k}$ is a family of blocks;
\item
no two distinct points of $W$ appear in a block; and
\item
every two distinct points not both in $W$ appear together in exactly $\lam$
blocks.
\end{itemize}

The necessary divisibility conditions weaken accordingly.

\begin{prop}
\label{neccond-lam}
The existence of an IPBD$_\lam((v;w),K)$ implies
\begin{eqnarray}
\label{local-lam}
\lam(v-1)~\equiv~\lam(w-1) &\equiv& 0 \pmod{\alpha(K)}, ~\text{and}\\
\label{global-lam}
\lam v(v-1) - \lam w(w-1) &\equiv& 0 \pmod{\beta(K)}.
\end{eqnarray}
\end{prop}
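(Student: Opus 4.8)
The plan is to mirror the counting arguments of Propositions~\ref{neccond} and \ref{neccond-h}, carrying an extra factor of $\lam$ through each pair count. As in those proofs, I would regard the design as an edge-decomposition of the multigraph in which every pair not contained in the hole $W$ is assigned multiplicity $\lam$ (and every pair inside $W$ multiplicity $0$), into cliques whose sizes lie in $K$.

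First I would establish the global condition (\ref{global-lam}). Counted with multiplicity, the number of covered pairs is $\lam\left[\binom{v}{2} - \binom{w}{2}\right] = \tfrac{\lam}{2}\big(v(v-1)-w(w-1)\big)$, and these are partitioned by the $\binom{k}{2} = \tfrac{k(k-1)}{2}$ pairs lying inside each block of size $k \in K$. Hence $\lam\big(v(v-1)-w(w-1)\big)$ is an integer linear combination of the quantities $k(k-1)$, $k \in K$, and is therefore divisible by $\beta(K)$, which gives (\ref{global-lam}).

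For the local condition (\ref{local-lam}), I would examine the link of a point $x$, that is, the blocks in $\cB_x := \{B \in \cB : x \in B\}$. If $x \in V \setminus W$, then $x$ is covered $\lam$ times with each of the $v-1$ remaining points, and the sets $B \setminus \{x\}$, $B \in \cB_x$, account for these incidences; thus $\lam(v-1)$ is an integer linear combination of the $k-1$, $k \in K$, whence $\alpha(K) \mid \lam(v-1)$. If instead $x \in W$, then $x$ is covered $\lam$ times with each of the $v-w$ points outside the hole and with no point of $W$, so the same argument yields $\alpha(K) \mid \lam(v-w)$. Subtracting the two divisibilities gives $\alpha(K) \mid \lam(v-1) - \lam(v-w) = \lam(w-1)$, which together with the first completes (\ref{local-lam}).

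I do not expect a substantial obstacle here: the argument is routine once the $\lam=1$ versions are in hand. The only point requiring a moment's care is that the local count at a hole point produces $\lam(v-w)$ rather than $\lam(w-1)$ directly, so the stated form emerges only after combining it with the condition coming from an exterior point in $V \setminus W$.
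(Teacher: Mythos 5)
Your proof is correct and follows exactly the counting template the paper itself relies on: the paper only proves Proposition~\ref{neccond} (the $\lam=1$, hole-free case) by this pair-counting and link-counting argument, and states Propositions~\ref{neccond-h} and~\ref{neccond-lam} without proof as routine analogues, which your argument supplies. Your treatment of the hole points --- deriving $\alpha(K) \mid \lam(v-w)$ from the link of a point of $W$ and combining it with $\alpha(K) \mid \lam(v-1)$ to obtain $\alpha(K) \mid \lam(w-1)$ --- is precisely the small subtlety that needs care, and you handle it correctly.
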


In the case $w=0$ or $w=1$, all pairs of points get covered exactly $\lam$ times, and the notation PBD$_\lam(v,K)$ may be used.  In \cite{RMW2}, R.M.~Wilson proved existence of PBD$_\lam(v,K)$ for fixed $K,\lam$ and sufficiently large $v$ satisfying the divisibility conditions.  Using this and our main result, we have the following existence result for IPBDs of general index 
$\lam$.

\begin{thm}
\label{main-lam}
Let $\lam \in \Z_{\ge 0}$, $K \subseteq \Z_{\ge 2}$ with $k=\min K$, and $\epsilon>0$.  For some $v_0=v_0(K,\lam,\epsilon)$,
an IPBD$_\lam((v;w),K)$ exists for all $v \ge v_0$ and $w$ satisfying 
$(\ref{local-lam})$, $(\ref{global-lam})$ and $v > (k-1+\epsilon) w$.
\end{thm}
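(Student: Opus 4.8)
The plan is to reduce the index-$\lam$ case to the already-proved index-$1$ case (Theorem~\ref{main}) by a standard superposition-plus-correction argument, following the pattern by which Wilson deduced PBD$_\lam$ existence (\cite{RMW2}) from the $\lam=1$ case. The key observation is that the divisibility conditions (\ref{local-lam}) and (\ref{global-lam}) for an IPBD$_\lam$ are genuinely weaker than (\ref{local}) and (\ref{global}): the parameters $\alpha(K)$ and $\beta(K)$ get effectively divided by relevant gcd's with $\lam$. So one cannot simply take $\lam$ disjoint copies of a single IPBD$((v;w),K)$; instead one must combine an IPBD of index $1$ (which exists for those $(v,w)$ meeting the \emph{stronger} conditions) together with a PBD$_{\lam'}$-type correction to mop up the residual pairs.

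First I would set $\alpha_\lam := \alpha(K)/\gcd(\alpha(K),\lam)$ and $\beta_\lam := \beta(K)/\gcd(\beta(K),\lam)$, so that (\ref{local-lam}) and (\ref{global-lam}) read $v-1 \equiv w-1 \equiv 0 \pmod{\alpha_\lam}$ and $v(v-1)-w(w-1)\equiv 0 \pmod{\beta_\lam}$. The arithmetic heart of the argument is to decompose a given admissible $(v,w)$ as $v-1 = \sum_i (v_i-1)$ and $w-1 = \sum_i (w_i-1)$ (a telescoping partition of the point counts) so that each summand pair $(v_i,w_i)$ satisfies the full index-$1$ conditions and the ratio bound $v_i > (k-1+\epsilon)w_i$. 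Concretely, the plan is to partition $V \setminus W$ into blocks of a convenient coarse structure: take an IPBD$((v;w),K)$ of \emph{index $1$} on a slightly smaller admissible hole and augment with auxiliary complete designs. In fact the cleanest route is: build on $V$ an auxiliary PBD$_\lam(v, K')$-style template whose block sizes lie in $K$, where the template accounts for exactly the multiplicity $\lam$ on pairs outside the hole, using Wilson's PBD$_\lam$ theorem together with a filling step that installs an index-$1$ IPBD inside each template block meeting the hole.

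The concrete construction I would carry out is a \emph{filling along a GDD}. Using Theorem~\ref{main}, for all sufficiently large admissible $v$ there is an IPBD$((v;w),K)$ of index $1$; taking $\lam$ superimposed copies of the \emph{same} design covers every pair outside the hole exactly $\lam$ times and yields an IPBD$_\lam((v;w),K)$ whenever $(v,w)$ happens to meet the index-$1$ divisibility conditions. The remaining (and main) task is the parameters $(v,w)$ that are admissible for index $\lam$ but \emph{not} for index $1$. For these I would use a group-divisible scaffold: choose a $K$-GDD of type $\alpha_\lam^{u} (\text{hole})^1$ analogous to Section~4, give each group a fixed small size, and fill the groups with small IPBD$_\lam$ ingredients whose existence on fixed parameters follows from Wilson's PBD$_\lam$ result after adjoining a point. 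The congruence-covering machinery of Sections~4--5 (Proposition~\ref{fixed-arb} and Lemmas~\ref{xstar}, \ref{ABC}) applies verbatim with $\alpha,\beta,\gamma$ replaced by $\alpha_\lam,\beta_\lam,\gamma_\lam:=\beta_\lam/\alpha_\lam$, because those arguments only used the group-size parameter as a formal modulus.

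The main obstacle, and the step I would treat most carefully, is verifying that the ingredient designs needed for the group-filling really do exist under the \emph{weaker} index-$\lam$ conditions, i.e.\ that replacing $\alpha$ by $\alpha_\lam$ throughout does not break the hypotheses of Theorem~\ref{asym-gdd} or of the recursion. The subtlety is that GDD$(\alpha_\lam^{\,u}, K)$ of index $\lam$ need not decompose into index-$1$ pieces, so one must re-invoke the Draganova--Liu-type asymptotic existence for GDDs \emph{of index $\lam$}; the cleanest way to secure this is to note that Wilson's theorem in \cite{RMW2} is stated for PBD$_\lam$ and passes to uniform GDDs of index $\lam$ by the standard truncation/deletion argument, giving the needed large-$u$ existence of $\lam$-fold GDD$(\alpha_\lam^{\,u},K)$. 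Once that ingredient is in hand, the entire Section~4--5 scaffolding transfers mechanically, and the ratio bound $v>(k-1+\epsilon)w$ is preserved because it depends only on $k=\min K$ and not on $\lam$ or the choice of modulus. I expect the remaining work to be bookkeeping: confirming that the finitely many small admissible $(v,w)$ not reached by the scaffold are handled by the fixed-$w$ analogue of Theorem~\ref{fixed} at index $\lam$.
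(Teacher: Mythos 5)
Your reduction handles the easy half correctly: for pairs $(v,w)$ that happen to satisfy the index-one conditions (\ref{local}) and (\ref{global}), superimposing $\lam$ copies of an IPBD$((v;w),K)$ from Theorem~\ref{main} works. The gap is in the other half --- the parameters satisfying (\ref{local-lam}) and (\ref{global-lam}) but not the index-one conditions --- where you claim the machinery of Sections~3--5 ``applies verbatim'' with $\alpha,\beta,\gamma$ replaced by $\alpha_\lam,\beta_\lam,\gamma_\lam$. That claim fails at its foundation. The recursion is fed by Lemma~\ref{xstar}, which needs the class-uniformly resolvable designs of Section~\ref{curds}, and that construction uses the identity $\gcd\{(k_i-1)/\alpha : i\}=1$ --- the defining property of $\alpha=\alpha(K)$ --- to run the Schur/Dirichlet argument producing $G_{K,\theta}$ with $\gcd(n,\alpha^*)=1$. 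Replacing $\alpha$ by $\alpha_\lam$ makes this gcd equal to $\alpha/\alpha_\lam$, which exceeds $1$ precisely when $\gcd(\lam,\alpha(K))>1$, i.e.\ precisely in the new cases your superposition step does not cover. Moreover, every ingredient existence theorem you would invoke (Theorems~\ref{asym-gdd}, \ref{asym-rgdd}, \ref{grgdd}, \ref{igdd-exist}) is an index-one statement; an index-$\lam$ design need not split into index-one layers, and your proposed repair --- obtaining $\lam$-fold uniform GDDs from Wilson's PBD$_\lam$ theorem ``by truncation/deletion'' --- does not work, since deleting points from a pairwise balanced design produces highly non-uniform group types. (Your second paragraph's template is also oriented backwards: filling blocks of a PBD$_\lam$ template with index-one IPBDs leaves the pairs inside the hole covered $\lam$ times rather than zero times.)

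The idea you are missing is that Theorem~\ref{main} already holds for an \emph{arbitrary}, possibly infinite, block-size set, and this gives a one-paragraph reduction with no index-$\lam$ GDD machinery at all --- this is the paper's proof. Let $L=\{u \ge 2 : \exists\,\text{PBD}_\lam(u,K)\}$. Then $\min L = k$ (a single block of size $k$ taken $\lam$ times shows $k \in L$), and by Wilson \cite[Proposition 9.2]{RMW1} one has $\alpha(L)=\alpha(K)/\gcd(\lam,\alpha(K))$ and $\beta(L)=\beta(K)/\gcd(\lam,\beta(K))$, or twice this number if it is odd --- a harmless discrepancy, since $v(v-1)-w(w-1)$ is always even, so divisibility by an odd modulus and by its double are equivalent here. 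Thus your conditions (\ref{local-lam}) and (\ref{global-lam}) are exactly the index-one admissibility conditions for the block-size set $L$, and Theorem~\ref{main} applied to $L$ produces an IPBD$((v;w),L)$ whenever $v \ge v_0(L,\epsilon)$ and $v > (k-1+\epsilon)w$. Replacing each block $B$, of size $u \in L$, by the block set of a PBD$_\lam(u,K)$ on the points of $B$ yields the desired IPBD$_\lam((v;w),K)$.
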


\begin{proof}
Let $L=\{u\ge 2 : \exists \, \text{PBD}_\lam(u,K)\}$.  Observe that $\min L = k$, so that by Theorem~\ref{main} there exists IPBD$((v;w),L)$ for all $v \ge v_0(L,\epsilon)$ and any $w$ satisfying the given inequality, where
$v \equiv w \equiv 1 \pmod{\alpha(L)}$ and $v(v-1)-w(w-1) \equiv 0 \pmod{\beta(L)}$. 

Put $v_0(K,\lam,\epsilon):=v_0(L,\epsilon)$ and suppose $v,w$ are given satisfying $v \ge v_0$, $v > (k-1+\epsilon) w$, (\ref{local-lam}) and (\ref{global-lam}).  
The latter two divisibility conditions can be rewritten
\begin{eqnarray*}
v-1~\equiv~w-1 &\equiv& 0 \pmod{\frac{\alpha(K)}{\gcd(\lam,\alpha(K))}}, ~\text{and}\\
v(v-1) - w(w-1) &\equiv& 0 \pmod{\frac{\beta(K)}{\gcd(\lam,\beta(K))}}.
\end{eqnarray*}
Following a similar argument for PBD$_\lam(v,K)$, in \cite[Proposition 9.2]{RMW1} it was shown that $\alpha(L)=\alpha(K)/\gcd(\lam,\alpha(K))$ and $\beta(L)=\beta(K)/\gcd(\lam,\beta(K))$, or twice this number if it is odd.  Therefore, there exists an IPBD$((v;w),L)$, say $(V,W,\mathcal{B})$. 
Replace each block $B \in \mathcal{B}$, say with size $|B|=u \in L$, by the block set of a PBD$_\lam(u,K)$ on the points of $B$.  The result is an IPBD$_\lam((v;w),K)$.
\end{proof}

\subsection{Incomplete latin squares}
A \emph{latin square} is an $n \times n$ array with entries from an
$n$-element set of symbols such that every row and column is a permutation
of the symbols.  (The symbol set $[n]:=\{1,\dots,n\}$ is conveneint, since it 
matches the row and colum indices.)

Two latin squares of side $n$ are \emph{orthogonal} if, when superimposed, all ordered pairs of symbols
occur exactly once among the $n^2$ cells.  A set of latin squares in which any pair are orthogonal is 
a set of \emph{mutually orthogonal latin squares}, or `MOLS' for
short.  The maximum size of a set of MOLS of order $n$ is denoted $N(n)$.
By a straightforward argument, $N(n) \le n-1$ for $n>1$,  with equality if and only
if there exists a projective plane of order $n$.  Consequently, $N(q)=q-1$
for prime powers $q$.  Using number sieves and some special constructions (adopted from earlier work),
Beth showed in \cite{Beth} that $N(n) \ge n^{1/14.8}$ for all sufficiently large $n$.

An \emph{incomplete latin square} of side $n$ with a \emph{hole} of
size $m$ is an $n \times n$ array $L=(L_{ij}: i,j \in [n])$ on $n$ symbols
(let us say $[n]$ once again) together with a \emph{hole} $M \subseteq
[n]$ such that
\begin{itemize}
\item
$L_{ij}$ is empty if $\{i,j\} \subseteq M$;
\item
$L_{ij}$ contains exactly one symbol if $\{i,j\} \not\subseteq M$;
\item
every row and every column in $L$ contains each symbol at most once; and
\item
symbols in $M$ do not appear in rows or columns indexed by $M$.
\end{itemize}
Incomplete latin squares are interesting in that they furnish a `border' for smaller squares to embed in larger ones.
An example in the case $n=5$, $m=2$ is shown below.
\begin{center}
\begin{tabular}{|ccccc|}
\hline
 &  & 3 & 4 & 5 \\
 &  & 4 & 5 & 3 \\
3 & 4 & 1 & 2 & 5 \\
4 & 5 & 2 & 3 & 1 \\
5 & 3 & 4 & 1 & 2 \\
\hline
\end{tabular}
\end{center}

Orthogonality can be extended to incomplete latin squares.
Two incomplete latin squares $L,L'$ with common hole on symbols in $M$ 
are \emph{orthogonal} if, when superimposed, all ordered pairs not in $M\times M$
occur exactly once among the (common) nonblank cells.
The `mutually orthogonal' terminology for sets of latin squares also applies to 
sets of incomplete latin squares with a common side and hole.
Let us abbreviate a set of $t$ mutually orthogonal incomplete squares of side $n$ with holes of
size $m$ by $t$-IMOLS$(n;m)$. The case $m=0$ or $1$
reduces to ordinary MOLS. 
It is a straightforward counting argument that the
existence of $t$-IMOLS$(n;m)$ requires
$n \ge (t+1)m$.  The case $t=1$ is the familiar condition that latin subsquares
cannot exceed half the size of their embedding.  In fact, $n\ge
2m$ is also sufficient for the existence of an incomplete latin square of order
$n$ with a hole of order $m$. 

It was shown in \cite{DvB} that a set of $t$-IMOLS$(n;m)$ exist for all sufficiently large integers $n$ and $m$ satisfying $n
\ge 8(t+1)^2m$.  The main tool was a weaker version of Theorem~\ref{main} in which the inequality 
$v > (k-1+\epsilon) w$ was instead $v > k_1 k_2 \cdots k_r w$, where $\alpha(\{k_1,\dots,k_r\}) = \alpha(K)$.
As an application of our result, we can easily improve the the constant factor from quadratic in $t$ to linear in $t$.

\begin{thm}
Let $\epsilon>0$. For $t>t_0(\epsilon)$, there exist $t$-IMOLS$(n;m)$ for all $n,m \ge n_0(t)$ satisfying $n> (1+\epsilon)(t+1)m$.
\end{thm}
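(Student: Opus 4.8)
The plan is to realize a set of $t$-IMOLS$(n;m)$ as an incomplete transversal design and to manufacture it by inflating an IPBD$((n;m),K)$ supplied by Theorem~\ref{main}. First I would record the standard equivalence: a set of $t$-IMOLS$(n;m)$ is precisely an incomplete transversal design ITD$(t+2,n;m)$, i.e. an IGDD$((n;m)^{t+2},\{t+2\})$. Applying Proposition~\ref{neccond-igdd} with $K=\{t+2\}$, so that $\alpha=t+1$ and $\beta=(t+2)(t+1)$, and with $u=t+2$ groups, one checks that $n(t+1)\equiv m(t+1)\equiv 0\pmod{t+1}$ and $(n^2-m^2)(t+2)(t+1)\equiv 0\pmod{(t+2)(t+1)}$ hold automatically. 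Hence there are no divisibility constraints at all, and the only genuine necessary condition is the inequality $n\ge (t+1)m$, which is consistent with the target $n>(1+\epsilon)(t+1)m$.

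For the inflation I would invoke the construction of \cite{DvB}: starting from an IPBD$((n;m),K)=(V,W,\cB)$ whose every block size $s\in K$ admits $t$ idempotent MOLS of order $s$ (equivalently a TD$(t+2,s)$ carrying a parallel class of `diagonal' transversals), replace each block $B$ by such a holey TD on $\{1,\dots,t+2\}\times B$, adjoin the diagonal block $\{(1,x),\dots,(t+2,x)\}$ for every point $x\in V\setminus W$, and take the inflated copy of $W$ as the hole. Then a flat pair $(i,x),(j,x)$ with $x\notin W$ is covered exactly once by its diagonal block; flat pairs at points of $W$ and all cross pairs lying inside $W$ are left uncovered, matching the ITD hole; and every remaining cross pair $(i,x),(j,y)$ with $x\neq y$ is covered once inside the unique holey TD on the block of $\cB$ containing $\{x,y\}$. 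The result is a set of $t$-IMOLS$(n;m)$.

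The entire improvement then rests on choosing $K$ with $\min K$ close to $t+1$ while keeping the divisibility of the ingredient IPBD vacuous. For $t$ large, classical prime-gap estimates produce a prime power $q_1$ with $t+2\le q_1\le (1+\epsilon/3)(t+1)$; such a $q_1$ satisfies $N(q_1)=q_1-1\ge t$ and supports the $t$ idempotent MOLS needed above. To annihilate divisibility I would adjoin a second prime power $q_2>t$ with $\gcd(q_1-1,q_2-1)=1$; such a $q_2$ exists by Dirichlet's theorem (take a prime $q_2\equiv 2\pmod{q_1-1}$) and is a constant depending only on $t$. With $K=\{q_1,q_2\}$ we have $\alpha(K)=1$, so conditions $(\ref{local})$ and $(\ref{global})$ for IPBD$((n;m),K)$ hold for every admissible pair. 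Applying Theorem~\ref{main} to this $K$ with slack $\epsilon'\ll\epsilon$ yields IPBD$((n;m),K)$ for all $n\ge n_0(t)$ and all $m$ with $n>(q_1-1+\epsilon')m$. Since $q_1-1+\epsilon'<(1+\epsilon/3)(t+1)+\epsilon'<(1+\epsilon)(t+1)$ once $t>t_0(\epsilon)$, the hypothesis $n>(1+\epsilon)(t+1)m$ forces this IPBD to exist, and inflating it produces the required IMOLS. The constants $t_0(\epsilon)$ and $n_0(t)$ absorb the prime-gap threshold together with the largeness requirements of Theorem~\ref{main} and of idempotent-MOLS existence.

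The substantive mechanism—the inflation handling flat pairs and the hole—is inherited wholesale from \cite{DvB}, so the genuinely new content, and the source of the linear bound, is purely number-theoretic: $\min K$ can be pushed down to essentially $t+1$ by locating a prime power just above $t$, whereas \cite{DvB} paid the multiplicative price $k_1\cdots k_r$ only because it had to rely on a weaker IPBD existence theorem with slack $v>k_1\cdots k_r\,w$. Accordingly, I expect the main obstacle to be the single number-theoretic step of confirming that a prime power $q_1$ in the short interval $(t,(1+\epsilon/3)(t+1)]$ both exists for all $t>t_0(\epsilon)$ and carries at least $t$ idempotent MOLS; everything else reduces to plugging Theorem~\ref{main} into an established template.
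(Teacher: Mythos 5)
Your overall route is exactly the paper's: choose a set $K$ of prime powers exceeding $t+1$ but not much larger, make the divisibility conditions for IPBD$((n;m),K)$ vacuous, apply Theorem~\ref{main}, and inflate the resulting IPBD via idempotent MOLS as in \cite[Lemma 5.2]{DvB}. The inflation and bookkeeping are fine; the problem is the number-theoretic step, which you yourself identify as the crux, and it contains two genuine errors. First, the Dirichlet step is impossible as stated. A prime-gap argument in the interval $(t,(1+\epsilon/3)(t+1)]$ produces an odd prime power $q_1$ (powers of $2$ are far too sparse to be guaranteed there), so $q_1-1$ is even; then every integer $\equiv 2 \pmod{q_1-1}$ is even, there is no prime $q_2\equiv 2\pmod{q_1-1}$ exceeding $2$, and Dirichlet's theorem does not apply since $\gcd(2,q_1-1)=2$. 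The obstruction is intrinsic, not an artifact of your particular residue class: if every member of $K$ is an odd prime power, then every $k-1$ is even and $\alpha(K)$ is even, so no two-element set of odd prime powers can have $\alpha(K)=1$; some member of $K$ must be a power of $2$.

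Second, you never address $\beta(K)$. Having $\alpha(K)=1$ makes (\ref{local}) vacuous, but (\ref{global}) is a condition modulo $\beta(K)=\gcd\{k(k-1):k\in K\}$, and since $n(n-1)-m(m-1)$ is always even, (\ref{global}) is automatic only when $\beta(K)=2$. If, say, $q_1\equiv 1 \pmod 4$ and the repaired $K$ contains a power of $2$, then $4 \mid \beta(K)$ and (\ref{global}) becomes a genuine restriction (it fails, e.g., for $n\equiv 2$ and $m\equiv 1 \pmod 4$), so the statement ``for all $n,m$ satisfying only the inequality'' would not follow. The paper disposes of both issues at once by taking $K=\{p,2^f,2^{f+1}\}$, where $p\equiv 3\pmod 4$ is a prime in $[t+2,(1+\epsilon/2)(t+1)]$ (obtained from the prime number theorem for arithmetic progressions, not just prime gaps) and $2^f$ is the least power of $2$ exceeding $t+1$: the two \emph{consecutive} powers of $2$ kill the odd parts of $\alpha(K)$ and $\beta(K)$ because $\gcd(2^f-1,2^{f+1}-1)=1$, and $p\equiv 3\pmod 4$ forces the even part of $\beta(K)$ to be exactly $2$. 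Your two-element construction could be repaired in the same spirit (take $q_1\equiv 3\pmod 4$ in the short interval and $q_2=2^f$ with $f$ a large prime chosen so that $2^f-1$ is coprime to $q_1(q_1-1)$), but as written both the existence of your $q_2$ and the claimed vacuousness of (\ref{global}) fail.
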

 
\begin{proof}
From the prime number theorem for arithmetic progressions, there exists a prime $p \equiv 3\pmod{4}$ in the interval $[t+2,(1+\epsilon/2)(t+1)]$, provided $t \ge t_0(\epsilon)$. Now, let $2^f$ be the smallest power of 2 greater than $t + 1$ and put $K:= \{p,2^f, 2^{f+1}\}$. Observe that an odd prime divisor of $p-1$ cannot divide both $2^f-1$ and $2^{f+1}-1$.  Together with $p-1 \equiv 2 \pmod{4}$, this ensures $\alpha(K)=1$ and $\beta(K)=2$. Then, by Theorem~\ref{main}, there exist IPBD$((n;m), K)$ for all
sufficiently large integers $n,m$ satisfying $n \ge (1+\epsilon)(t+1)m$.
 
Since each $k \in K$ is a prime power exceeding $t+1$, there exist $t$ mutually orthogonal idempotent latin squares of side $k$.
It follows from \cite[Lemma 5.2]{DvB} that there exist $t$-IMOLS$(n;m)$.
\end{proof}
 
\rk
By choosing any $p \equiv 3 \pmod{4}$ exceeding $t+1$, even if it is far from $t+1$, we still have $\min K \le 2^f \le 2(t+1)$, so that
the existence of $t$-IMOLS$(n;m)$ is obtained for sufficiently large $n \ge 2(t+1)m$.  That is, $t_0=1$ is possible when $\epsilon=1$.

\subsection{Closure and subdesigns}

In the preceding subsections, we have used IPBDs as a `template' to construct certain other incomplete designs.  Although it is not our intention to list many more such applications along these lines, we include  a few general remarks and highlights.

Wilson \cite{RMW0} defines a set $K$ of positive integers to be \emph{PBD-closed} if $K=\{n: \exists \, \text{PBD}(n,K)\}$.  A wide variety of designs and related structures can be parameterized by a PBD-closed set; \cite{LW} contains several interesting examples.  For such objects, their existence question effectively reduces to a finite problem.  The method is roughly as follows: (1) find a small (yet sufficiently rich) PBD-closed set $K$ carrying constructible values; (2) verify that $\{n : n\equiv 1 \pmod{\alpha(K)} \text{ and } n(n-1) \equiv 0 \pmod{\beta(K)}\}$ matches the divisibility conditions (if any); and (3) use a PBD$(n,K)$ as a template to build the design for sufficiently large $n$ satisfying the conditions.

We propose that the above method, with IPBD replacing PBD in the last step, is applicable in essentially any situation where the class of designs is PBD-closed with respect to some parameter.  The resulting incomplete designs can accommodate substructures.

\begin{ex}[Resolvable designs with subdesigns]
\label{sub-kts}
A resolvable PBD$(v,\{3\})$ is also known as a \emph{Kirkman triple system}, or KTS$(v)$.  It turns out that Kirkman triple systems are parametrized by a PBD-closed set, though not in the usual way: the set 
$L=\{ n \ge 1 : \exists \, \text{KTS}(2n+1) \}$ is PBD-closed; see \cite{RCW}.  The divisibility conditions on $v$ amount to $v \equiv 3 \pmod{6}$, and this is equivalent to $n \equiv 1 \pmod{3}$.  From an affine plane of order 3 and the solution to Kirkman's schoolgirl problem, we have $\{4,7\} \subseteq L$.  Wilson's theorem delivers the existence of PBD$(n,\{4,7\})$ for sufficiently large $n \equiv 1\pmod{3}$; in fact, existence is known \cite{Brouwer47} for all such positive $n \equiv 1 \pmod{3}$ except $n=10,19$.  In this way, the existence question for KTS$(v)$ can be completely settled using constructions for just four values: $v=9,15,21,39$.

The parameter $n$ in a KTS$(2n+1)$ is the number of parallel classes 
or replication number, and indeed the closure above can be viewed as joining parallel classes from a template PBD$(n,\{4,7\})$.  If instead we use an IPBD$((n;m),\{4,7\})$ as the template, the result is an IPBD$((v;w),\{3\})$, say $(V,W,\mathcal{B})$, where $v=2n+1$ and $w=2m+1$, whose block set $\cB$ can be resolved into $n-m$ full parallel classes and $m$ partial parallel classes (i.e. partitions of $V \setminus W$).  This design accommodates plugging in a KTS$(w)$ on the points of $W$ to produce a KTS$(v)$ containing a sub-KTS$(w)$.  Moreover, parallel classes of the sub-KTS$(w)$ are inherited from those of the KTS$(v)$.

Our main result with $K=\{4,7\}$ yields a construction whenever $v \equiv w \equiv 3 \pmod{6}$ under the assumption $v > (3+\epsilon)w$ and $w > w_0(\epsilon)$.
Using several explicit constructions as ingredients, Rees and Stinson proved the stronger result that there exists a KTS$(v)$ containing a sub KTS$(w)$ if and only if $v \equiv w \equiv 3 \pmod{6}$, $v \ge 3w$.  For details, see \cite[\S 4]{Stinson-KTS} and the references therein. 
\end{ex}

Let $R_k$ denote the set of positive integers $r$ for which there exists a resolvable PBD$(r(k-1)+1,\{k\})$.  Ray-Chaudhuri and Wilson showed in \cite{RCW} that $\{r \ge 1: r \equiv 1 \pmod{k}\} \setminus R_k$ is a finite set.  Extending the above method, we may construct a resolvable PBD$(v,\{k\})$ containing a sub-system of order $w$ if $v \equiv w \equiv k \pmod{k(k-1)}$, $v > (r_1(k)+\epsilon)w$, and $w > w_0(\epsilon)$, where $r_1(k):=\min R_k \setminus \{1\}$.  The value $r_1(k)$ is not known in general; however, the existence of an affine plane of order $k$ implies $r_1(k)=k+1$.  So the existence question for resolvable subdesigns may be nearly settled in the case that the block size $k$ is a prime power.  Even so, infinitely many cases near the boundary $v=kw$ escape the technique, as do finitely many small parameter pairs $(v,w)$.

\begin{ex}[Complementing 3-paths]
\label{3-paths}
Consider an edge-decomposition of a graph into paths of length 3 in such a way that complementing every path produces another such decomposition.  It was shown in \cite{GMR} that the complete graph $K_n$ admits such a `complementing 3-path' decomposition if and only if $n \equiv 1 \pmod{3}$.  In \cite{LW}, Lamken and Wilson modeled such a system as a decomposition of a 2-edge-colored $K_n$ into copies of a 2-edge-colored $K_4$ in which the edges of each color induce a path on 3 edges.  This illustrates that the set $L$ of orders of complete graphs for which such a decomposition exists is PBD-closed.  It is clear that $4 \in L$.  A construction for $n=7$ arises from developing the path $0-1-3-6 \pmod{7}$.
Using the same class of template IPBDs with block sizes $\{4,7\}$ as in the previous example, this is already enough to obtain complementing 3-path decompositions of a \emph{difference} of complete graphs  $K_n \setminus K_m$ when $n \equiv m \equiv 1\pmod{3}$, $n > (3+\epsilon)m$, and $m \ge m_0(\epsilon)$.  An explicit existence result can be found in \cite{RSsub}.  
\end{ex}

We include one more example which provides a new  explicit existence result.

\begin{ex}[Resolvable reverse triple systems]
\label{rrtriple}

A PBD$(v,\{3\})$ is said to be a \emph{reverse triple system} with respect to a particular 
point $p$ when the permutation $\phi_p$ that fixes $p$ but simultaneously  
interchanges all pairs of points $x,y$ for which $\{p,x,y\}$ is a triple of the 
system is, in fact, an automorphism, i.e. leaves the block set invariant. The existence 
of a resolvable reverse triple system on $2n+1$ points is equivalent 
to the decomposition of a 3-edge-colored $K_n$ into copies of two 3-edge-colored 
cliques $K_4$; see \cite[Example 2.10]{LW} for details. 
This decomposition, together with the 
existence of PBD$(n,\{4\})$ for all $n\equiv 1 \  \text{or}\  4 \pmod{12}$, is used to show that 
resolvable reverse triple systems exist for all $v\equiv 3$ or $9 \pmod{24}$. 
Similarly, the existence of IPBD$((n,m),\{4\})$ for $n\equiv 1\  \text{or}\  4 \pmod{12}$, 
$m\equiv 1\  \text{or}\  4 \pmod{12}$ and $n\geq 3m+1$, \cite{RS4hole},  together with 
the same decomposition using 3-edge-colored $K_4$s, provides the existence of 
reverse resolvable triple system on $v=2n+1$ points which contains as a 
subdesign a reverse resolvable triple system on $w=2m+1$ points for all 
$v\equiv 3\  \text{or}\  9 \pmod{24}$ and $w\equiv 3\  \text{or}\  9 \pmod{24}$ satisfying 
$v\geq 3w$. 
\end{ex}

As in \cite{LW}, a variety of designs with additional structure, such as  Whist tournaments, 
Steiner pentagon systems,  Room squares, near resolvable designs, and self-orthogonal latin squares are PBD-closed.  Doubly resolvable BIBDs wth $\lambda =1$ and doubly 
near resolvable BIBDs are also PBD-closed, \cite{Lmors}.
A `near existence theory' for the incomplete variants of all these 
types of designs follows similarly from our work.  In each situation, the remaining small cases and boundary cases must be considered separately, with specialized constructions or non-existence arguments particular to the problem.

\section{Group divisible designs of type $g^n h^1$}
\label{guh1}

\subsection{Existence}

Here, we generalize our result on IPBDs to the setting of $K$-GDDs of type 
$g^nh^1$. The complete result requires several steps.

We  begin by recalling the necessary divisibility conditions from 
Proposition~\ref{neccond-gdd-h} for the existence of a $K$-GDD of type $g^n h^1$, where $gn>0$ are
\begin{eqnarray}
\label{local-guh1}
gn~\equiv~h-g &\equiv& 0 \pmod{\alpha(K)}, ~\text{and}\\
\label{global-guh1}
gn(gn-g+2h)  &\equiv& 0 \pmod{\beta(K)}.
\end{eqnarray}

We also have the following inequality.

\begin{prop}
\label{hole-boundgdd}
Let $k =\min K$. If there  exists a GDD$(g^n h^1,K)$, then   
\begin{eqnarray}
\label{ineq-gdd}
g(n-1) \ge (k-2)h.
\end{eqnarray}
\end{prop}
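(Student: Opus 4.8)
The plan is to establish the inequality \eqref{ineq-gdd} by a counting argument analogous to the proof of Proposition~\ref{holesize-bound}, focusing on a single point and counting the blocks through it that reach into the special group of size $h$. First I would fix a point $x$ lying in one of the $n$ groups of size $g$. Such a point must be joined, by the GDD condition, to every point outside its own group, and in particular to all $h$ points of the special group. Since $x$ lies in exactly one block with each of these $h$ points, and each block through $x$ meets the special group in at most one point (blocks meet each group in at most one point), there are exactly $h$ blocks through $x$ that intersect the special group.

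Next I would bound the total number of blocks through $x$ from above. The point $x$ is joined to $g(n-1)+h$ other points in total (the $g(n-1)$ points in the other ordinary groups plus the $h$ points of the special group), and since the smallest block size is $k$, each block through $x$ covers at most $k-1$ of these points. Hence the number of blocks through $x$ is at least $\frac{g(n-1)+h}{k-1}$. Of these, exactly $h$ meet the special group, so at least $\frac{g(n-1)+h}{k-1} - h$ blocks through $x$ are disjoint from the special group. Each such block covers at most $k-1$ of the $g(n-1)$ points that share no group with $x$ but lie outside the special group. This forces
\begin{equation*}
(k-1)\left( \frac{g(n-1)+h}{k-1} - h \right) \le g(n-1),
\end{equation*}
which simplifies directly to $g(n-1) + h - (k-1)h \le g(n-1)$, i.e. $g(n-1) \ge (k-2)h$ after rearranging. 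This is exactly \eqref{ineq-gdd}.

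I do not expect any serious obstacle here, since the structure mirrors Proposition~\ref{holesize-bound} almost verbatim, with the only genuine difference being bookkeeping: in the IPBD case the hole points are mutually non-adjacent and contribute a clean count of $w$ hole-incident blocks, whereas here the ``hole'' is an honest group of the GDD, so I must take care that the $h$ points of the special group are each joined to $x$ exactly once and that no block through $x$ hits two of them. The mild subtlety worth double-checking is the edge case where $x$ lies in a block entirely contained among non-special groups but the block size exceeds $k$; using $k=\min K$ uniformly as the per-block savings keeps the inequality valid, since larger blocks only cover more points per block and thus strengthen rather than weaken the bound. An equally clean alternative, which I would mention as a cross-check, is the global double-counting of pairs exactly as sketched for the IGDD inequality \eqref{ineq-igdd}: counting blocks disjoint from the special group and comparing pair-counts yields the same conclusion, so either route confirms \eqref{ineq-gdd}.
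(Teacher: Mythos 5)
Your overall plan---fix a point $x$ in an ordinary group and count blocks through it---is indeed the paper's approach, and your first paragraph is correct: exactly $h$ blocks through $x$ meet the special group. But the second paragraph reverses the key inequality, and this breaks the proof. Since $k=\min K$ is the \emph{smallest} block size, every block through $x$ covers \emph{at least} $k-1$ of the $g(n-1)+h$ points joined to $x$ (blocks can be larger than $k$, never smaller), so $\frac{g(n-1)+h}{k-1}$ is an \emph{upper} bound on the number of blocks through $x$, not a lower bound; your claimed lower bound is false whenever $K$ contains sizes exceeding $k$ (if most blocks through $x$ have size $\max K$, the block count drops well below your threshold), and it also contradicts your own stated intention to bound the block count ``from above.'' Worse, even granting that false bound, the algebra at the end does not deliver the proposition: expanding $(k-1)\bigl(\frac{g(n-1)+h}{k-1}-h\bigr)\le g(n-1)$ gives $g(n-1)+h-(k-1)h\le g(n-1)$, in which the $g(n-1)$ terms cancel, leaving $(k-2)h\ge 0$ --- a tautology for $k\ge 2$, not $g(n-1)\ge (k-2)h$. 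So as written the argument proves nothing, and the ``subtlety'' you flag about larger blocks is in fact fatal to your version rather than harmless.

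The repair is short and is exactly the paper's proof. Let $r$ be the number of blocks through $x$. You already established $r\ge h$ (exactly $h$ of these blocks meet the special group, and the remaining count $r-h$ is nonnegative). Combining this with the correct upper bound $r\le\frac{g(n-1)+h}{k-1}$ gives $(k-1)h\le g(n-1)+h$, i.e.\ $g(n-1)\ge (k-2)h$. Equivalently, in your ``disjoint coverage'' language, count the hole-\emph{meeting} blocks rather than the hole-disjoint ones: each of the $h$ blocks through $x$ meeting the special group has size at least $k$, hence contains at least $k-2$ points lying in ordinary groups other than the group of $x$, and since two blocks through $x$ share only the point $x$, these sets are pairwise disjoint subsets of the $g(n-1)$ such points, so $g(n-1)\ge (k-2)h$ directly. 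The global pair count you mention as a cross-check (the route used for the IGDD inequality) would also work, but it would need to be carried out; your main argument, as written, has both the direction of the block-count bound and the final rearrangement wrong.
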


\begin{proof}
Let $x$ be an element of some group of size $g$. Since $x$ must occur with each 
element in the group of size $h$, the number of blocks which contain $x$ is at least $h$. 
Since $x$ must occur with each of $g(n-1)+h$ elements in blocks of size at least $k$, 
we also have that the maximum number of blocks which contain $x$ is 
$(g(n-1) + h)/(k-1)$. This gives the desired inequality: 
$(g(n-1) + h)/(k-1) \geq h$ or $g(n-1) \geq (k-2)h$. 
\end{proof}

{\sc Remark.}
As in the case of Proposition~\ref{holesize-bound}, equality in (\ref{ineq-gdd}) 
holds if and only if every block intersects the group or `hole' of size $h$ and 
has size $k$. The case of maximal $h$ is constructed from a resolvable 
GDD$(g^n,k-1)$; see \cite[Proposition 6.2]{DLLholes}.

The existence question for GDDs with all but one group of equal size was solved for $K=\{3\}$ in \cite{CHR}.  Even in the case $K=\{4\}$ the problem is still not finished, although after \cite{GL,WG} there remain only a few outstanding cases.

We first use our main result  on IPBDs to provide an existence result  for
$K$-GDDs of type $g^nh^1$ for general $K$ in the case of fixed $g$ and $h$ satisfying $g \mid h$.

\begin{thm}
\label{main-guh1}
Let $g$ and $s$ be positive integers, $K \subseteq \Z_{\ge 2}$, and $\epsilon>0$.  For some $n_0=n_0(K,g,s,\epsilon)$,
a $K$-GDD of type $g^n(gs)^1$ exists for all $n \ge n_0$ and $s$ satisfying
the divisibility conditions with $h=gs$ and $n > (r-2+\epsilon) s$, where $r >1$ is an integer such that there exists a $K$-GDD of type $g^r$.
\end{thm}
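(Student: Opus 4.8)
The plan is to build the GDD by \emph{inflation}: start from a template IPBD, replace every point by $g$ points, and fill each inflated block with a copy of a $K$-GDD of type $g^\rho$. The device that makes this precise is the observation that an IPBD$((v;w),R)$ is exactly a GDD of type $1^{\,v-w}w^1$ with block sizes in $R$, the hole acting as the single group of size $w$ and the remaining $v-w$ points as singleton groups. Writing $R:=\{\rho\ge 2:\text{a }K\text{-GDD of type }g^\rho\text{ exists}\}$ (nonempty, since it contains the given $r$), I would apply Wilson's Fundamental Construction, Lemma~\ref{wfc-lemma}, to a GDD of type $1^{\,n}s^1$ with block sizes in $R$, using the constant weight $g$ on every point and filling each block $B$ of size $\rho\in R$ with a $K$-GDD of type $g^\rho$. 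The group sums are $g$ for each singleton and $gs$ for the size-$s$ group, so the output is precisely a $K$-GDD of type $g^n(gs)^1$.

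To produce the template, first I would pin down $R$. By Theorem~\ref{asym-gdd} it contains every sufficiently large $\rho$ with $g(\rho-1)\equiv 0\pmod\alpha$ and $g^2\rho(\rho-1)\equiv 0\pmod\beta$, so I may pass to a finite subset $R'\subseteq R$ with $\alpha(R')=\alpha(R)$ and $\beta(R')=\beta(R)$ (this keeps every block size actually used realizable by a filler). Since $g$ and $s$ are fixed, the hole size $w=s$ is fixed; assuming $s$ is admissible for the target GDD we have $g(s-1)\equiv 0\pmod\alpha$, hence $s\equiv 1\pmod{\alpha(R')}$, and Theorem~\ref{fixed} then delivers an IPBD$((n+s;s),R')$ for all sufficiently large $n$ meeting the IPBD divisibility conditions for $R'$. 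Inflating this template gives the desired GDD, while the output is automatically a valid GDD of the stated type. Note that, with $s$ fixed, the stated inequality $n>(r-2+\epsilon)s$ holds for all large $n$ regardless of $r$, and is recorded only for consistency with the growing-hole results to come.

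The hard part is matching the divisibility conditions, i.e. showing that every $(n,s)$ admissible for a $K$-GDD of type $g^n(gs)^1$ gives a pair $v=n+s$, $w=s$ admissible for IPBD$((v;w),R')$. Using $v(v-1)-w(w-1)=n(n-1+2s)$, the IPBD conditions read $\alpha(R)\mid n$, $\alpha(R)\mid s-1$, and $\beta(R)\mid n(n-1+2s)$. For the local part I would verify $\alpha(R)=\alpha/\gcd(g,\alpha)=:\alpha'$, exactly the modulus implicit in (\ref{local-guh1}); the nontrivial direction (that this gcd is no larger) uses $\gcd(\alpha,\gamma)=1$ to route the global divisibility of admissible $\rho$ onto $\rho$ rather than $\rho-1$. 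The genuinely delicate point is the global condition: in general $\beta(R)$ strictly exceeds $\beta/\gcd(g^2,\beta)$ --- for example with $K=\{3\}$ and $g=2$ one finds $\beta(R)=6$ whereas $\beta/\gcd(g^2,\beta)=3$ --- so the conditions $\beta\mid g^2 n(n-1+2s)$ from (\ref{global-guh1}) and $\beta(R)\mid n(n-1+2s)$ from (\ref{global}) are not literally the same. I would reconcile them by proving that the surplus factor $\beta(R)\gcd(g^2,\beta)/\beta$ always divides $n(n-1+2s)$ once the local conditions $\alpha'\mid n$ and $\alpha'\mid s-1$ hold (in the example, $n(n-1+2s)$ is always even), making the two global conditions equivalent. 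Establishing this forced divisibility prime by prime, again exploiting $\gcd(\alpha,\gamma)=1$, is the crux; with it in hand, the inflation construction above finishes the proof.
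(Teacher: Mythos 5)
Your construction core coincides with the paper's: inflate a template IPBD$((n+s;s),R)$ by constant weight $g$ via Lemma~\ref{wfc-lemma}, filling each block of size $\rho$ with a $K$-GDD of type $g^\rho$, where $R$ (the paper's $M$) is the set of group numbers for which such uniform GDDs exist. Your divisibility analysis is also on target: the identity $(n+s)(n+s-1)-s(s-1)=n(n-1+2s)$, the identification $\alpha(R)=\alpha/\gcd(g,\alpha)$, and especially the observation that $\beta(R)$ can strictly exceed $\beta/\gcd(g^2,\beta)$ (your $K=\{3\}$, $g=2$ example is correct), with the surplus absorbed using $\alpha/\gcd(g,\alpha)\mid n$, the automatic evenness of $n(n-1+2s)=n(n-1)+2ns$, and $\gcd(\alpha,\gamma)=1$. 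This reconciliation is genuinely the delicate point; the paper's own justification here is terser (it only records that $\beta/\gcd(g^2,\beta)$ divides $\beta(M)$, which by itself points the wrong way), so your treatment of this step is, if anything, more careful.

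The genuine gap is your choice of template theorem. You declare that since $s$ is fixed the hole size $w=s$ is fixed, invoke Theorem~\ref{fixed}, and dismiss the inequality $n>(r-2+\epsilon)s$ as ``recorded only for consistency.'' That misreads the statement: $s$ is meant to vary with $n$ (the dependence of $n_0$ on $s$ is a slip in the statement --- for fixed $s$ the inequality would be vacuous and carry no content), and the theorem's purpose in the paper is precisely to transfer the new \emph{large-hole} existence result to GDDs of type $g^n(gs)^1$; the bounded-$s$ case you prove was already essentially available from Theorem~\ref{fixed}, i.e.\ from \cite{DvB}. The paper instead applies Theorem~\ref{main} to the block-size set $M=\{m\ge 2:\exists\,K\text{-GDD of type }g^m\}$ with $r=\min M$: since $r\in M$ forces $\min M\le r$, the hypothesis $n>(r-2+\epsilon)s$ is exactly the required inequality $v=n+s>(\min M-1+\epsilon)s=(\min M-1+\epsilon)w$ for the IPBD$((n+s;s),M)$. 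Your argument is repaired by this single substitution --- the inflation step and your divisibility reconciliation go through verbatim --- but as written it establishes only the fixed-$s$ weakening, not the theorem as stated and used.
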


\begin{proof}
With $h=gs$, (\ref{local-guh1}) becomes $n \equiv s \equiv 0 \pmod{\alpha(K)/\gcd(g,\alpha(K))}$ and (\ref{global-guh1}) becomes $n(n-1+2s) \equiv  0 \pmod{\beta(K)/\gcd(g^2,\beta(K))}$.  Let $M = \{m \ge 2 : \exists \, K\text{-GDD of type } g^m\}$ and put $r := \min M$.  In his existence theory for uniform group divisible designs, Chang proved in \cite{Chang} that $\alpha(M) = \alpha(K)/\gcd(g,\alpha(K))$ and 
$\beta(M)=\beta(K)/\gcd(g^2,\gamma(K))\gcd(g,\alpha(K))$, or twice this number if it is odd.  We note that
$\gcd(g^2,\gamma) \gcd(g,\alpha)$ divides $\gcd(g^2,\gamma)\gcd(g^2,\alpha) = \gcd(g^2,\beta)$, where here $K$ is suppressed for clarity.  It follows that 
$\beta(K)/\gcd(g^2,\beta(K))$ divides $\beta(M)$. 

Suppose parameters $n$ and $s$ are given satisfying the hypotheses.  From the above calculations, $n$ and $s$ meet the divisibility conditions for IPBD$((n+s;s),M)$, and also the required inequality $n+s > (r-1+\epsilon)s$.  From such an IPBD, blow up each point in this design into a disjoint bundle of $g$ points, and replace each block, say of size $m$, by a $K$-GDD of type $g^m$ on the corresponding points.  The result is a $K$-GDD of type $g^n(gs)^1$.
\end{proof}

Note that the value $r$ in the theorem can be chosen by Theorem~\ref{asym-gdd}, 
or by a direct construction in specific cases. 
In special cases, we may get the desired inequality with $r=k$. In general, we 
can replace  $r$ by $k$ in the necessary inequality of Theorem~\ref{main-guh1}.
by using a slightly longer argument. The proof is very similar to the proof of 
Theorem~\ref{main}.  In this case, we use the template IPBDs,  Theorem~\ref{main-guh1},
to provide the ingredient designs for  Lemma~\ref{ABC}.
 We  sketch some of the details for the proof. 
We write $\gcd(g,\alpha) n/ \alpha = tA_0+B_0$, 
and observe then that $gn = \alpha(tA+B)$, where $g \mid \alpha A, \alpha B$.  Likewise, we put $gs = \alpha(C+z)$, where $g \mid \alpha C, \alpha z$ and $\gamma \mid C$.  Fill groups of our GDD of type $(\alpha A)^t (\alpha B)^1 (\alpha C)^1$ with GDDs of type $g^{\alpha A/g} (\alpha z)^1$ and $g^{\alpha B/g} (\alpha z)^1$, setting aside a common group of size $\alpha z$, noting that these ingredients exist for bounded $z$ and large $A,B$ from 
Theorem~\ref{main-guh1}.   Including the group of size $\alpha z$ with the last group 
or `hole', the resulting GDD has type $g^n(gs)^1$.  The inequality on $n$ and $s$ weakens as needed since $C$ can be as large as $At/(k-2+\epsilon)$ in the lemma. Thus, we have the following result.

\begin{thm}
\label{main-guh2}
Let $g$ and $s$ be positive integers, $K \subseteq \Z_{\ge 2}$, and $\epsilon>0$.  For some $n_0=n_0(K,g,s,\epsilon)$,
a $K$-GDD of type $g^n(gs)^1$ exists for all $n \ge n_0$ and $s$ satisfying
the divisibility conditions with $h=gs$ and $n > (k-2+\epsilon) s$, where $k=\min K$.
\end{thm}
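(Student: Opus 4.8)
The plan is to follow the proof of Theorem~\ref{main} almost verbatim, making two substitutions: the ``master'' design is again the $K$-GDD of Lemma~\ref{ABC}, but its groups are now filled with GDDs of type $g^m(\alpha z)^1$ supplied by Theorem~\ref{main-guh1}, in place of the IPBDs supplied by Theorem~\ref{fixed}. The first observation is that the genuinely new content occupies only a bounded band of ratios. Writing $r:=\min\{m\ge 2:\exists\,K\text{-GDD of type }g^m\}$ as in Theorem~\ref{main-guh1}, that theorem already produces the desired GDD whenever $n/s>r-2+\epsilon$; since a block of a $g^m$ GDD meets each of the $m$ groups at most once we have $r\ge k$, so it remains only to treat the near-extremal range $k-2+\epsilon<n/s\le r-2+\epsilon$, in which $n/s$ is bounded by a constant depending only on $K$ and $g$. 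I would therefore apply Lemma~\ref{ABC} with $\epsilon/2$ in place of $\epsilon$ and, using the remark following Lemma~\ref{xstar}, choose the number of groups $t$ large enough that $t\ge r$.

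Next I would set up the parametrisation. Given admissible $n,s$ with $n>(k-2+\epsilon)s$ in the band above, I would write $gn=\alpha(tA+B)$ and $gs=\alpha(C+z)$, taking $z$ in a bounded residue system to pin down the congruence classes and arranging $\gamma\mid A,C$ together with $g\mid \alpha A,\alpha B,\alpha C,\alpha z$ so that the filling GDDs can be formed. The inequality $n>(k-2+\epsilon)s$ is exactly what places $C$ in the admissible window $A\le C\le At/(k-2+\epsilon/2)$ of Lemma~\ref{ABC}: since $\alpha C\approx gs$ and $\alpha A\approx gn/t$, the upper bound amounts to $n\ge(k-2+\epsilon/2)s$, which holds with room to spare, while $A\le C$ amounts to $n/s\le t$, which holds because $t\ge r$ exceeds the ratio throughout the band. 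With $A,B,C$ large I would then invoke Lemma~\ref{ABC} to obtain a $K$-GDD of type $(\alpha A)^t(\alpha B)^1(\alpha C)^1$.

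The filling step is standard. Adjoin $\alpha z$ new points, to be shared by every filling, fill each of the $t$ groups of size $\alpha A$ with a $K$-GDD of type $g^{\alpha A/g}(\alpha z)^1$ and the group of size $\alpha B$ with a $K$-GDD of type $g^{\alpha B/g}(\alpha z)^1$, always using the $\alpha z$ new points as the common special group. These ingredients exist by Theorem~\ref{main-guh1}, since $\alpha z$ is bounded while $A,B$ are large, so the required inequality $\alpha A/g>(r-2+\epsilon)(\alpha z/g)$ is automatic. The group of size $\alpha C$ is left unfilled and, together with the adjoined points, becomes the hole of size $\alpha(C+z)=gs$. The number of resulting groups of size $g$ is $t(\alpha A/g)+\alpha B/g=\alpha(tA+B)/g=n$, so the design has type $g^n(gs)^1$. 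Pair coverage is routine: cross-group pairs of the master GDD are covered by its blocks, pairs inside a filled group are covered by the block set of the filling GDD placed on it, and every remaining pair lies inside the hole, where it is correctly left uncovered. The residual parameters not reached this way, namely bounded $s$ or $n/s>r-2+\epsilon$, are handled directly by Theorem~\ref{main-guh1}.

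The step I expect to be the main obstacle is the divisibility bookkeeping forced by the extra factor $g$. Unlike in Theorem~\ref{main}, one must realise $gn=\alpha(tA+B)$ and $gs=\alpha(C+z)$ simultaneously with $\gamma\mid A,C$, with $g\mid\alpha A,\alpha B,\alpha C,\alpha z$, and with $z$ confined to a bounded range, all while remaining compatible with the necessary conditions (\ref{local-guh1}) and (\ref{global-guh1}) read through $h=gs$. The natural route is to begin from a decomposition $\gcd(g,\alpha)\,n/\alpha=tA_0+B_0$ and clear denominators, as indicated before the statement; checking that the induced congruences on $A,B,C,z$ admit large solutions in the prescribed classes, and that the filling GDDs then meet their own divisibility conditions, is the part that demands care. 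It is, however, entirely parallel to the congruence analysis already performed for GDDs of type $\alpha^n w^1$ in Section~4.
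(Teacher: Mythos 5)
Your proposal matches the paper's own argument: the paper likewise writes $gn=\alpha(tA+B)$ and $gs=\alpha(C+z)$, takes the $K$-GDD of type $(\alpha A)^t(\alpha B)^1(\alpha C)^1$ from Lemma~\ref{ABC}, fills the groups of sizes $\alpha A$ and $\alpha B$ with GDDs of types $g^{\alpha A/g}(\alpha z)^1$ and $g^{\alpha B/g}(\alpha z)^1$ from Theorem~\ref{main-guh1} sharing a common group of $\alpha z$ new points, and absorbs that group into the last group to form the hole of size $gs$. Your additional reduction to the bounded band $k-2+\epsilon < n/s \le r-2+\epsilon$ (so that choosing $t\ge r$ guarantees $A\le C$ in Lemma~\ref{ABC}) is a sensible refinement of a point the paper leaves implicit, but the route is essentially identical.
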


We now turn our attention to the case of any  $h$ where $h \equiv g \pmod{\alpha(K)}$ 
rather than just $h=gs$.  This case is more difficult since 
we first need to construct examples for each possible $h$ and establish the existence 
of $K$-GDDs of type $g^nh^1$ for fixed $h$ and $n$ sufficiently large. There are three 
steps to complete our proof of the general case. We first use `holey group divisible designs', HGDDs, to construct a class of examples for each possible $h$. 
We begin with some preliminary definitions and constructions for HGDDs.

A (uniform) \emph{holey group divisible design} is a quadruple
$(X,\Pi,\Xi,\cB)$, where $X$ is a set of $x$ points, $\Pi$ and $\Xi$ are partitions of 
$X$ and $\cB$ is a collection of subsets of $X$ (blocks) such that 
\begin{itemize}
\item
$\Pi=\{V_1,\dots,V_u\}$ is a partition of $X$ into $u$ \emph{groups} of size
$hm$;
\item
$\Xi=\{W_1,\dots,W_m\}$ is a partition of $X$ into  $m$ \emph{holes} of size $uh$, 
where $|V_i \cap W_j|=h$ for each $i,j$.
\item
$\cB $ is a set of of blocks which meet each group and each hole in at most one point; and
\item
any two points from distinct groups and distinct holes appear together in exactly one block.
\end{itemize}

We abbreviate such a design as an HGDD of type $u \times h^m$.  If each block $B \in \cB$ 
has size from a set $K$, we denote this by HGDD$(u\times h^m, K)$.  When $K=\{k\}$ and 
$h=1$, these designs are also known as \emph{grid designs} or `modified group divisible designs.'
HGDDs are special uniform types of `double group divisible designs' or DGDDs; 
see \cite{GeLingrgdd,GeReesZhu} for further information.

The asymptotic existence of grid designs was established by 
Chang in 1976 in \cite{Chang} (where they are called lattice designs.)  A newer proof can be found in \cite{LW}.

\begin{thm}[Chang, \cite{Chang}]
\label{asymptotic-grid}
Let $v$, $\ell$ be given with $v\geq \ell \geq 2$. There exists a constant $u_0=u_0(v,\ell)$ 
such that an HGDD$(u \times 1^v,\{\ell\}) $ exists for all integers $u \geq u_0$ that 
satisfy 
\begin{eqnarray}
\label{local-grid}
(v-1)(u-1) &\equiv& 0 \pmod{(\ell -1)} ~\text{and}\\
\label{global-grid}
v(v-1)u(u-1) &\equiv& 0 \pmod{\ell (\ell -1)}.
\end{eqnarray}
\end{thm}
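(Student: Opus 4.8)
The plan is to recast the grid design as a clique decomposition of a single graph with one growing parameter, and then appeal to the asymptotic theory of edge-colored graph decompositions of \cite{LW}. First I would observe that an HGDD$(u \times 1^v,\{\ell\})$ is precisely a decomposition into $\ell$-cliques of the graph on the $uv$ cells of a $u \times v$ array in which two cells are adjacent exactly when they lie in distinct rows and distinct columns. This graph is the tensor (categorical) product $K_u \times K_v$, and an $\ell$-clique in it is exactly a partial transversal of the array of size $\ell$. Thus the theorem asserts the asymptotic (in $u$) existence of a $K_\ell$-decomposition of $K_u \times K_v$ subject to the necessary conditions, and the constraint $v \ge \ell$ is just the requirement that an $\ell$-clique have $\ell$ distinct columns available.

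Next I would fold the $v$ columns into colors so that the growing index $u$ becomes the vertex set. Take the complete graph on the row set $[u]$ and color its edges by the $v(v-1)$ ordered pairs of distinct columns, placing on the edge $\{i,i'\}$ with $i<i'$ one edge of color $(j,j')$ to represent the cell-pair $\{(i,j),(i',j')\}$. This host is the complete $v(v-1)$-edge-colored graph (exactly one edge of each color between each pair of vertices, so $\lambda=1$), and a block corresponds to a choice of $\ell$ rows together with an injection assigning distinct columns to them, i.e.\ to a copy of one of the finitely many edge-colored cliques $K_\ell$ in the family $\mathcal{F}$ indexed by column-injections $[\ell]\to[v]$. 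A grid design is then exactly an $\mathcal{F}$-decomposition of this complete edge-colored graph, which is the object governed by the main theorem of \cite{LW}.

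I would then check that the necessary conditions of that theorem collapse to exactly $(\ref{local-grid})$ and $(\ref{global-grid})$, and here the symmetry does the work. The row group $S_u$ acts on the host, making it regular in each color, while the column group $S_v$ permutes the colors and the members of $\mathcal{F}$ compatibly and acts transitively on ordered column-pairs; hence the per-color conditions form a single $S_v$-orbit and are mutually equivalent. Taken together, the edge-count conditions amount to divisibility of the total edge count $\frac{1}{2}uv(u-1)(v-1)$ by $\binom{\ell}{2}$, which is $(\ref{global-grid})$. For the degree condition I would use the refined count: the host edges that place a fixed column at a fixed row number $(u-1)(v-1)$, and each incident block contributes either $0$ (if it assigns a different column to that row) or $\ell-1$ of them, so $(\ell-1)\mid(u-1)(v-1)$, which is $(\ref{local-grid})$. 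The same symmetry furnishes a positive rational $\mathcal{F}$-decomposition by assigning equal weight to every block-copy, so the fractional hypothesis of \cite{LW} holds automatically; the asymptotic theorem then yields the constant $u_0=u_0(v,\ell)$ and existence for all admissible $u \ge u_0$.

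The main obstacle is the third step: confirming that the a priori per-color family of conditions in \cite{LW} genuinely reduces to just the two stated congruences, with no hidden obstruction. This is where one must be careful with the ordered-versus-unordered color convention and, crucially, with the refined degree count that distinguishes which column sits at a given row — it is this refinement, rather than the crude total degree $(u-1)v(v-1)$ at a row-vertex, that produces the sharper condition $(\ref{local-grid})$. Once the $S_u\times S_v$ symmetry is invoked to show every remaining condition is equivalent to one of these two, the asymptotic sufficiency follows directly from \cite{LW}.
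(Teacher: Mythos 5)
The paper itself gives no proof of this statement: it is quoted from Chang's thesis \cite{Chang}, with a pointer to \cite{LW} for a modern proof. Your plan follows that modern route, and most of its ingredients are sound, but it has a genuine gap at its central step: the claim that a grid design ``is exactly'' an $\mathcal{F}$-decomposition of your undirected edge-colored host is false. With your convention (the edge $\{i,i'\}$, $i<i'$, of color $(j,j')$ encodes the cell pair $\{(i,j),(i',j')\}$), the decoding of a color depends on the relative order of the two rows, while a ``copy'' in the sense of \cite{LW} is an arbitrary color-preserving embedding. Concretely, take $\ell=3$, distinct columns $j_1,j_2,j_3$, and the colored triangle on rows $1,2,3$ with edge $\{1,2\}$ colored $(j_1,j_2)$, edge $\{2,3\}$ colored $(j_1,j_3)$, and edge $\{1,3\}$ colored $(j_2,j_3)$. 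This is a legitimate copy of your member $G_\psi$ with $\psi=(j_1,j_2,j_3)$, via the order-reversing embedding $x_1\mapsto 2$, $x_2\mapsto 1$, $x_3\mapsto 3$; yet its edges decode to the cell pairs $\{(1,j_1),(2,j_2)\}$, $\{(2,j_1),(3,j_3)\}$, $\{(1,j_2),(3,j_3)\}$, which involve five distinct cells and are not the pair set of any block (a block would need $\phi(2)=j_2$ and $\phi(2)=j_1$ simultaneously). Hence an $\mathcal{F}$-decomposition supplied by the theorem of \cite{LW} may contain such copies and need not yield an HGDD: the implication you need (decomposition $\Rightarrow$ design) is exactly the direction that breaks. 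The unordered-color variant fails too, for a different reason: with colors $\{j,j'\}$ of multiplicity two, the parallel edges are indistinguishable, so a decomposition may cover $\{(i,j),(i',j')\}$ twice and $\{(i,j'),(i',j)\}$ never.

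The standard repair--and, in effect, how this machinery is actually run for group-structured designs such as GDDs, IGDDs and grid designs (see \cite{Liu,Draganova,CvB})--is to work with edge-colored \emph{digraphs}: on the row set, each ordered pair $(i,i')$ of distinct rows carries one directed edge of each color $(j,j')$, decoded order-independently as ``row $i$ receives column $j$, row $i'$ receives column $j'$,'' and each family member carries both orientations of each of its edges. Then color-preserving embeddings of members correspond exactly to blocks, and decompositions correspond exactly to HGDD$(u\times 1^v,\{\ell\})$'s. With this fix the rest of your outline is the standard labour in applying \cite{LW}: your two counting arguments do correctly establish necessity of (\ref{local-grid}) and (\ref{global-grid}); transitivity of the row and column symmetries on directed colored edges justifies the uniform positive fractional decomposition; and what remains is an honest lattice computation showing that the host's degree and edge vectors lie in the lattices generated by the members' vectors whenever (\ref{local-grid}) and (\ref{global-grid}) hold. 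Note, though, that your $S_u\times S_v$ symmetry argument is only available \emph{after} passing to the directed formulation: your undirected coloring is not even invariant under transposing two rows, which is precisely the source of the gap.
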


Wilson's Fundamental Construction can be used to construct HGDDs from grid designs.

\begin{lemma}
\label{hgdd-class}
Let $\ell$ be a positive integer such that $\ell \equiv 1 \pmod{\alpha(K)}$ and
 $\ell \equiv 0\pmod{\gamma(K)}$. Suppose $u$ is a postive integer, $u\geq u_0$, 
where   $u\equiv 1\pmod{(\ell -1)}$ and $u\equiv 0\pmod{\ell}$, 
then there exists an HGDD$(u\times h^v,K)$.
\end{lemma}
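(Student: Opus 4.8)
The plan is to construct the desired HGDD$(u \times h^v, K)$ by applying Wilson's Fundamental Construction (Lemma~\ref{wfc-lemma}) to a suitable grid design, using $\ell$ as the block size of the grid and inflating by uniform weight. First I would invoke Theorem~\ref{asymptotic-grid} with block size $\ell$ and $v$ parts to obtain a grid design HGDD$(u \times 1^v, \{\ell\})$, valid for all large $u$ satisfying (\ref{local-grid}) and (\ref{global-grid}). The two hypotheses $u \equiv 1 \pmod{\ell-1}$ and $u \equiv 0 \pmod{\ell}$ are precisely tailored so that these congruences hold: with $u \equiv 1 \pmod{\ell-1}$ we get $(v-1)(u-1) \equiv 0 \pmod{\ell-1}$, and the divisibility $u(u-1) \equiv 0 \pmod{\ell(\ell-1)}$ follows from the same two conditions since $\gcd(\ell,\ell-1)=1$. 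So the grid design exists.

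Next I would assign weight $h$ to every one of the $uv$ points of this grid design and apply Wilson's Fundamental Construction. Each block of the grid design has size $\ell$, so to carry out the construction I need a GDD$(h^{\ell}, K)$ to substitute on each block; this is where the two congruence conditions on $\ell$ enter. The hypotheses $\ell \equiv 1 \pmod{\alpha(K)}$ and $\ell \equiv 0 \pmod{\gamma(K)}$ are exactly the divisibility conditions (\ref{local-gdd}) and (\ref{global-gdd}) for a uniform GDD$(h^{\ell},K)$ when $h$ is appropriately chosen: indeed, $\alpha \mid h(\ell-1)$ and $\gamma \mid h^2 \ell(\ell-1)$ reduce to the stated congruences on $\ell$ (at least for $h$ with $\gcd(h,\alpha)$ controlled). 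Thus by Theorem~\ref{asym-gdd}, a GDD$(h^{\ell}, K)$ exists. The effect of inflating each point of a grid design by $h$ is to replace each group of size $1$ by a group of size $h$ and each hole likewise, so the resulting design is a holey group divisible design of type $u \times h^v$ with block sizes in $K$, which is exactly the claimed HGDD$(u \times h^v, K)$.

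The step I expect to require the most care is verifying that the group/hole structure is correctly preserved under weighting, namely that the substituted GDD blocks respect both the group partition $\Pi$ and the hole partition $\Xi$ of the grid design. In a grid design, each block meets each of the $v$ ``column'' holes and each group in at most one point; after inflating each such point to $h$ points and installing a GDD$(h^\ell,K)$, the new blocks meet each inflated group in at most one point (since the GDD groups sit inside the inflated grid-groups) and correctly avoid covering pairs that lie within a common inflated hole. I would want to check that a pair of points from the same inflated hole $W_j$ but different inflated groups is covered exactly once when the original grid points lie in different grid-groups, and never covered when they lie in the same hole--group intersection; this bookkeeping is routine but is the genuine content linking the grid design's combinatorics to the HGDD axioms. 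Finally, I would record that the hypothesis $u \ge u_0$ absorbs the asymptotic requirement of Theorem~\ref{asymptotic-grid}, completing the proof.
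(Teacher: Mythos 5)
Your proof is correct and takes essentially the same route as the paper: obtain the grid design HGDD$(u \times 1^v,\{\ell\})$ from Theorem~\ref{asymptotic-grid}, then apply Wilson's Fundamental Construction with uniform weight $h$, filling the inflated blocks with GDD$(h^{\ell},K)$'s supplied by Theorem~\ref{asym-gdd}. Your extra verifications are fine (and more than the paper records); note only that the hedge about $\gcd(h,\alpha)$ is unnecessary, since $\alpha \mid \ell-1$ and $\gamma \mid \ell$ already give the divisibility conditions for GDD$(h^{\ell},K)$ for every $h$.
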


\begin{proof}
We choose $u$ so that there exists an HGDD$(u \times 1^v,\{\ell\})$, by
Theorem~\ref{asymptotic-grid}. Since there exists a GDD$(h^{\ell},K)$, by Theorem~\ref{asym-gdd}, we simply 
apply Wilson's Fundamental Construction with weight $h$ and using the GDDs as ingredient designs 
to get an HGDD$(u\times h^v,K)$.
\end{proof}

Construction 3.20 of \cite{GeLingrgdd} provides a way to construct non-uniform 
GDDs from DGDDs by `filling in groups'. We state the construction in terms of HGDDs.

\begin{cons}[see \cite{GeLingrgdd}]
If there exists an HGDD$(u \times h^v,K)$ and a GDD$(h^{v}a^1,K)$, 
then there is a GDD$((hu)^va^1,K)$.
\end{cons}

We apply this construction with $a=h$ and $v$ chosen so that there exists a 
$K$-GDD of type $h^{v+1}$. If  $v\equiv 0\pmod{\alpha(K)}$ and 
$v\equiv -1 \pmod{\gamma(K)}$, then 
there is a $K$-GDD of type $(hu)^vh^1$ where $u$ is as in Lemma~\ref{hgdd-class}.
The next step is to turn this GDD into a GDD with groups of sizes $g$ and $h$
where $g\equiv h \pmod{\alpha}$. 
To do this, we write $hu=gs$. Notice that the conditions above give us that 
$u\equiv 1 \pmod{\alpha}$ and therefore $s\equiv 1\pmod{\alpha}$. Since 
$u\equiv 0\pmod{\gamma}$, $gs=hu\equiv 0 \pmod{\gamma}$. 
Thus the necessary conditions for the existence of a GDD$(g^s,K)$ 
(Theorem~\ref{asym-gdd}) are satisfied. 
This gives us the first examples of GDDs where $h$ is not a multiple of $g$.

\begin{lemma}
\label{gh-example}
Let  $v\equiv 0\pmod{\alpha(K)}$ and  $v\equiv -1 \pmod{\gamma(K)}$.
There exists a GDD$(g^{sv}h^1,K)$ where $s$ is a positive integer such that 
$s\equiv 1\pmod{\alpha}$ and $gs\equiv 0 \pmod{\gamma}$.
\end{lemma}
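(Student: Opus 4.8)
The plan is to realize this lemma as a chain of three constructions already assembled in this section: Lemma~\ref{hgdd-class} for holey group divisible designs, the filling construction of \cite{GeLingrgdd} recorded just above, and one final round of \emph{filling in groups} with uniform GDDs supplied by Theorem~\ref{asym-gdd}. The design produced by the first two steps will be a GDD$((hu)^vh^1,K)$; writing $hu=gs$ and breaking each large group into $s$ groups of size $g$ will then yield the desired GDD$(g^{sv}h^1,K)$.

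First I would fix the auxiliary modulus $\ell$ by the Chinese Remainder Theorem, taking $\ell$ large with $\ell\equiv 1\pmod{\alpha}$ and $\ell\equiv 0\pmod{\gamma}$; this is possible since $\gcd(\alpha,\gamma)=1$, and (as in the proof of Lemma~\ref{hgdd-class}) these are exactly the residues making GDD$(h^\ell,K)$ exist via Theorem~\ref{asym-gdd}. I would then choose $u\ge u_0$ with $u\equiv 1\pmod{\ell-1}$ and $u\equiv 0\pmod{\ell}$, again by CRT since $\gcd(\ell-1,\ell)=1$, and take $\ell\le v$ (which is automatic once $v$ is large, using that $\ell$ and $v+1$ obey the same residues). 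Lemma~\ref{hgdd-class} then delivers an HGDD$(u\times h^v,K)$. Two consequences of these choices are the facts I will use later: $\alpha\mid\ell-1\mid u-1$ gives $u\equiv 1\pmod{\alpha}$, and $\gamma\mid\ell\mid u$ gives $\gamma\mid u$.

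Next, the hypotheses $v\equiv 0\pmod{\alpha}$ and $v\equiv -1\pmod{\gamma}$ translate into $v+1\equiv 1\pmod{\alpha}$ and $v+1\equiv 0\pmod{\gamma}$, which are precisely the divisibility conditions guaranteeing a GDD$(h^{v+1},K)$ by Theorem~\ref{asym-gdd}. Feeding the HGDD$(u\times h^v,K)$ together with this GDD$(h^{v+1},K)=$GDD$(h^vh^1,K)$ into the filling construction of \cite{GeLingrgdd} (with $a=h$) produces a GDD$((hu)^vh^1,K)$. I then set $gs:=hu$ and fill each of the $v$ groups of size $hu=gs$ with a copy of a GDD$(g^s,K)$ on its points, splitting each such group into $s$ groups of size $g$ while leaving the single group of size $h$ intact; this standard group-filling (as in Proposition~\ref{sparse-class1}) outputs a GDD$(g^{sv}h^1,K)$. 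To justify the ingredient GDD$(g^s,K)$, I verify its divisibility conditions: $\gamma\mid u$ gives $\gamma\mid gs$, and $u\equiv 1\pmod{\alpha}$ with the standing relation $h\equiv g\pmod{\alpha}$ gives $gs=hu\equiv g\pmod{\alpha}$, hence $g(s-1)\equiv 0\pmod{\alpha}$; exactly as checked for $\ell$ above, these imply $g^2s(s-1)\equiv 0\pmod{\beta}$, so Theorem~\ref{asym-gdd} applies for $s$ large.

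The main obstacle is the number-theoretic bookkeeping that makes $s=hu/g$ a genuine, suitably large positive integer satisfying the two advertised congruences. In particular one must arrange $g\mid hu$ (so that $s$ is an integer) simultaneously with the largeness of $u$ and the already-imposed conditions $u\equiv 1\pmod{\ell-1}$, $u\equiv 0\pmod{\ell}$; I would handle this by adjoining a further congruence on $u$ modulo a divisor of $g$ chosen coprime to $\ell(\ell-1)$ and appealing once more to the Chinese Remainder Theorem. The one point deserving care is the passage from $gs\equiv g\pmod{\alpha}$ to the stated $s\equiv 1\pmod{\alpha}$: this is immediate when $\gcd(g,\alpha)=1$, and in general the congruence actually needed for GDD$(g^s,K)$ is the weaker $g(s-1)\equiv 0\pmod{\alpha}$, which the construction supplies unconditionally, so the interplay between $g$, $h$, and $\alpha$ is exactly where I would be most careful.
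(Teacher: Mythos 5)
Your proposal is correct and follows essentially the same route as the paper: an HGDD$(u\times h^v,K)$ from Lemma~\ref{hgdd-class}, filled via the construction from \cite{GeLingrgdd} with $a=h$ using the GDD$(h^{v+1},K)$ whose existence is exactly what the hypotheses on $v$ guarantee, and then the resulting groups of size $hu=gs$ refined by GDD$(g^s,K)$'s to give the GDD$(g^{sv}h^1,K)$. You are in fact more careful than the paper itself, which writes $hu=gs$ without addressing integrality of $s$ and passes from $g(s-1)\equiv 0\pmod{\alpha}$ to $s\equiv 1\pmod{\alpha}$ without comment; the two subtleties you flag are genuine, but they are shared by (indeed inherited from) the paper's own argument.
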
 

Notice that there is no requirement on $h$ here except that $h\equiv g\pmod{\alpha}$.
These designs can be used together with the same argument used in our sketch of 
the proof of Theorem~\ref{main-guh2} to 
provide a `fixed $h$' result.

\begin{thm}
\label{gh-fixedh}
Let $g$ and $h$ be positive integers  and $K \subseteq \Z_{\ge 2}$. For 
some $n_0=n_0(K,g,h)$, there exists a GDD$(g^nh^1,K)$ for all $n \geq n_0$ and 
$h$ satisfying the divisibility conditions. 
\end{thm}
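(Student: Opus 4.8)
The plan is to mirror the proof of Theorem~\ref{fixed-hole}, which settled the fixed-hole case $g=\alpha$, now using the designs furnished by Lemma~\ref{gh-example} as the basic ingredients that already carry the prescribed hole size $h$. First I would translate the divisibility conditions (\ref{local-guh1}) and (\ref{global-guh1}), for fixed $g,h$ with $h \equiv g \pmod{\alpha}$, into a pair of congruence conditions on $n$ modulo a single modulus $M=M(K,g,h)$. For $n$ large these subsume the inequality (\ref{ineq-gdd}), so the `analytic' difficulties addressed by Lemma~\ref{ABC} do not arise here; only the congruence-hitting problem remains, exactly as in the fixed-hole setting.

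The construction then proceeds in two stages. The \emph{sparse} stage is supplied directly by Lemma~\ref{gh-example}, which yields a GDD$(g^{sv}h^1,K)$ for $v$ in a fixed congruence class; iterating a hole-filling step (filling the hole of one such design with the group structure of another, as in Proposition~\ref{sparse-class1}) extends this to a GDD$(g^nh^1,K)$ for all large $n$ in a single restricted residue class modulo $M$. The \emph{arbitrary-congruence} stage is the analog of Proposition~\ref{fixed-arb}: I would take an IGDD$((g';h')^u,K)$, which exists for large $u$ by Theorem~\ref{igdd-exist} once $g' \ge (k-1)h'$, and fill its groups by the natural variant of Construction~\ref{igdd-fillgdd} using the sparse designs above, realizing GDD$(g^nh^1,K)$ in every admissible residue class of $n$ modulo $M$. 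Finally I would combine the two stages by writing $n=n_1+n_2+n_3$ with $n_1,n_3$ in the sparse class and $n_2$ carrying the required residue, and fill holes iteratively as in the proof of Theorem~\ref{fixed-hole} to obtain GDD$(g^nh^1,K)$ for all sufficiently large admissible $n$.

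The main obstacle is the arbitrary-congruence stage. Because $h$ need not be a multiple of $g$, the hole of the final design is assembled from a size-$h'$ group contribution across the $u$ copies of the IGDD together with a set-aside block, and one must solve the resulting system for the IGDD parameters $g',h',u$ so that the divisibility conditions (\ref{local-igdd}) and (\ref{global-igdd}) hold, the inequality $g' \ge (k-1)h'$ is respected, and the assembled hole equals exactly $h$ while $n$ lands in the target residue class modulo $M$. This is the direct generalization of the prime-power-by-prime-power Chinese Remainder computation carried out in Proposition~\ref{fixed-arb} (there specialized to group size $\alpha$); the extra freedom $g \neq \alpha$ together with the constraint $g \nmid h$ makes the bookkeeping heavier, but the same device of taking $u \equiv \tfrac12$ modulo odd prime powers, and choosing $u$ so as to linearize the quadratic term modulo powers of $2$, should again reduce the problem to a solvable linear congruence.
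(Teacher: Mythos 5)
Your plan transplants the Sections 2--4 machinery (sparse stage, IGDD congruence-hitting, three-part hole-filling) to group size $g$, but this is precisely what the paper does \emph{not} do, and the reason it cannot be done with the ingredients you name is where your proposal breaks. The congruence-hitting step in Proposition~\ref{fixed-arb} works only because its filling designs come from Proposition~\ref{sparse-resol}: those GDD$(\alpha^x(\alpha y)^1,K)$ satisfy $x \equiv 1-2y \pmod{\gamma}$ with $y$ a \emph{free} parameter, so the group count $x$ sweeps through essentially all residue classes mod $\gamma$ as $y$ varies; this is both what lets $n=ux$ reach classes with $\alpha n \not\equiv 0 \pmod{\gamma}$ and what makes the CRT verification of (\ref{global-igdd}) close up. Your fillings are instead the designs of Lemma~\ref{gh-example} (or sums of them obtained by iterated hole-filling), and every one of these has group count of the form $sv$ with $gs \equiv 0 \pmod{\gamma}$, hence $g\cdot(\text{group count}) \equiv 0 \pmod{\gamma}$. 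Feeding them into the IGDD construction forces $gn_2 = u(gx) \equiv 0 \pmod{\gamma}$ no matter how $g',h',u$ are chosen, and the same holds for your sparse pieces $n_1,n_3$; so every $n$ your construction can ever produce satisfies $gn \equiv 0 \pmod{\gamma}$. But condition (\ref{global-guh1}) only requires $gn(gn-g+2h) \equiv 0 \pmod{\beta}$, which admits classes with $gn \not\equiv 0 \pmod{\gamma}$. Concretely, take $K=\{3\}$, $g=5$, $h=3$ (so $\alpha=2$, $\gamma=3$): every $n \equiv 4 \pmod{6}$ is admissible, since $n$ is even and $gn-g+2h = 5n+1 \equiv 0 \pmod{3}$, yet $5n \equiv 2 \pmod{3}$. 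Your "arbitrary-congruence stage" can never reach these classes, so it is not arbitrary; the missing ingredient is a group-size-$g$ analogue of Proposition~\ref{sparse-resol}, and none exists in the paper --- weighting a projective-extension template by $g$ produces holes divisible by $g$, which is useless when $g \nmid h$.

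The paper's actual proof of Theorem~\ref{gh-fixedh} sidesteps this entirely by never generating congruence classes at group size $g$. It takes the master GDD of type $(\alpha A)^t(\alpha B)^1(\alpha C)^1$ from Lemma~\ref{ABC}, where all the mod-$\gamma$ flexibility already lives (through the choice of $B$), fills the groups of sizes $\alpha A$ and $\alpha B$ with GDDs of types $g^{\alpha A/g}(gw)^1$ and $g^{\alpha B/g}(gw)^1$ supplied by Theorem~\ref{main-guh2} (holes that are multiples of $g$, so no new congruence machinery is needed), sets aside the common group of size $gw$, and then chooses $C$ and $w$ so that $\alpha C + gw = gsv+h$, allowing the one remaining group to be filled by a single design from Lemma~\ref{gh-example}. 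The fact that $\alpha C$ need not be divisible by $g$ is exactly how holes with $g \nmid h$ arise. To salvage your outline you would first have to construct a genuine $g$-analogue of Proposition~\ref{sparse-resol} (group counts in flexible classes mod $\gamma$, holes congruent to $g$ mod $\alpha$ but not multiples of $g$); absent that, reducing to the $\alpha$-level template as the paper does is not merely more economical but essentially forced.
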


\begin{proof}
We can use Lemma~\ref{ABC} as above to construct  $K$-GDDs of 
type $g^n(\alpha C + gw)^1$.
In this case, the common group set aside will be of size $gw$ and we do not 
require that $g$ divide $\alpha C$.  We use $K$-GDDs of types $g^{\alpha A/g}(gw)^1$
and $g^{\alpha B/g} (gw)^1$, Theorem~\ref{main-guh2}. 
The existence of these designs 
 implies that  $gw\equiv g \pmod{\alpha}$.  Since we also require $g\equiv h\pmod{\alpha}$, 
this means that $gw\equiv h\pmod{\alpha}$.
So we can choose $C$ and $w$ so that $gsv+h=\alpha C + gw$.
The resulting design is a GDD$(g^n(gsv+h)^1, K)$. The last step is to use 
the designs from Lemma~\ref{gh-example} to fill in the last group. The 
resulting design is a GDD$(g^{n+sv}h^1,K)$.
\end{proof}

Finally, we can apply the same argument once again using Lemma~\ref{ABC} as we did in 
the proof of Theorem~\ref{main} to establish the asymptotic existence of 
GDD$(g^nh^1,K)$.

\begin{thm}
\label{main-guh}
Let $g$ and $h$ be positive integers, $K \subseteq \Z_{\ge 2}$, and $\epsilon>0$.  For some $n_0=n_0(K,g,h,\epsilon)$,
a $K$-GDD of type $g^nh^1$ exists for all $n \ge n_0$ and $h$ satisfying
the divisibility conditions and $g(n-1) > (k-2+\epsilon) h$, where $k=\min K$.
\end{thm}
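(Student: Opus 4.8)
The plan is to mirror the proof of Theorem~\ref{main} almost verbatim, substituting the `fixed $h$' result (Theorem~\ref{gh-fixedh}) for Theorem~\ref{fixed} as the supply of small ingredient designs, and using the versatile GDD family of Lemma~\ref{ABC} to hit the large hole sizes. First I would reduce the divisibility conditions (\ref{local-guh1}) and (\ref{global-guh1}) to statements about the `reduced' parameters. Writing $gn=\alpha(tA+B)$ with $\gcd(g,\alpha)\,n/\alpha = tA_0+B_0$ (so that $g\mid \alpha A,\alpha B$), and $h=\alpha(C+z)$ with $z\in\{0,1,\dots,\gamma-1\}$, $\gamma\mid C$, and $g\mid \alpha C,\alpha z$, I would check that the given conditions on $g^nh^1$ translate into the admissibility needed for the building blocks. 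Here $t$ is chosen as in Lemma~\ref{ABC} with $\epsilon/2$ playing the role of $\epsilon$, so that the hole-size inequality $C\le At/(k-2+\epsilon/2)$ is available.

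Next I would assemble the construction. Take the $K$-GDD of type $(\alpha A)^t(\alpha B)^1(\alpha C)^1$ guaranteed by Lemma~\ref{ABC} for large $A,B,C$ with $\gamma\mid A,C$ and $B\le A\le C\le At/(k-2+\epsilon)$. Set aside a common group of size $\alpha z$, and fill all but the last (hole) group: the group of size $\alpha A$ is filled with a GDD of type $g^{\alpha A/g}(\alpha z)^1$ and the group of size $\alpha B$ with a GDD of type $g^{\alpha B/g}(\alpha z)^1$. These ingredient GDDs exist for bounded $z$ and sufficiently large $A,B$ by Theorem~\ref{main-guh2} (which already weakens the inequality to $n>(k-2+\epsilon)s$ in the $g\mid h$ case). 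Absorbing the set-aside group of size $\alpha z$ into the last group $\alpha C$ produces a hole of size $\alpha(C+z)=h$, and the resulting design is a $K$-GDD of type $g^nh^1$. Because $C$ can be taken nearly as large as $At/(k-2+\epsilon)$, the inequality on $n$ and $h$ weakens to the claimed $g(n-1)>(k-2+\epsilon)h$, after accounting for the $\epsilon/2$-to-$\epsilon$ slack and the lower-order term from $B$ and the set-aside points.

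The remaining step handles the finitely many hole sizes the construction misses (those $h$ too small to write in the required form with large $A,B,C$): for each such $h$, Theorem~\ref{gh-fixedh} supplies a $K$-GDD of type $g^nh^1$ for all sufficiently large $n$ satisfying the divisibility conditions, with no inequality obstruction once $h$ is fixed. Combining the two ranges of $h$ yields the asymptotic result for all large $n$.

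I expect the main obstacle to be bookkeeping in the divisibility and inequality reductions rather than any new idea. In particular, since $g$ need not divide $\alpha$, one must verify carefully that $g\mid \alpha A$ and $g\mid \alpha B$ can be arranged simultaneously with $\gamma\mid A,C$ and with $A,B,C$ landing in the admissible congruence classes demanded by Lemma~\ref{ABC} and Theorem~\ref{main-guh2}; the interplay of the moduli $\alpha$, $\gamma$, and the extra factor $\gcd(g,\alpha)$ is where errors are easy to make. The other delicate point is confirming that the final inequality emerges as $g(n-1)>(k-2+\epsilon)h$ (rather than with a stray constant), which requires tracking that the contributions of the $\alpha B$ group and the $\alpha z$ set-aside points are lower order relative to the dominant $tA$ term as $A,B,C\to\infty$.
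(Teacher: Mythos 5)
Your overall architecture matches the paper's: take the $K$-GDD of type $(\alpha A)^t(\alpha B)^1(\alpha C)^1$ from Lemma~\ref{ABC}, fill the groups of sizes $\alpha A$ and $\alpha B$ with fixed-hole ingredient GDDs sharing a set-aside group, and absorb that group into the last group to form the hole. But your decomposition of $h$ breaks the generality of the theorem. You write $h=\alpha(C+z)$ with $g \mid \alpha C$ and $g \mid \alpha z$, which forces both $\alpha \mid h$ and $g \mid h$. Theorem~\ref{main-guh} assumes only the divisibility condition (\ref{local-guh1}), i.e.\ $h \equiv g \pmod{\alpha}$. If $g \not\equiv 0 \pmod{\alpha}$ (e.g.\ $K=\{4\}$, $\alpha=3$, $g=1$, where the admissible $h$ satisfy $h \equiv 1 \pmod{3}$), then \emph{no} admissible $h$ has your form; and even when $\alpha \mid g$ (e.g.\ $g=6$, $\alpha=3$), your form misses the infinitely many admissible $h \equiv 3 \pmod{6}$. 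These missed values are not ``finitely many small $h$'': they include values growing linearly with $n$, and your fallback, Theorem~\ref{gh-fixedh}, cannot reach them, because it is a fixed-$h$ statement whose $n_0$ depends on $h$ and so can never cover pairs $(n,h)$ with $h$ proportional to $n$.

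The repair is exactly where your execution diverges from your stated plan. You announced that Theorem~\ref{gh-fixedh} would replace Theorem~\ref{fixed} as the supply of ingredient designs, but you then drew the ingredients from Theorem~\ref{main-guh2}, whose holes must be multiples of $g$ --- that is what forces $g \mid \alpha z$ and $g \mid \alpha C$. Instead, as in the paper, write $g \equiv h \equiv i \pmod{\alpha}$ with $0 \le i < \alpha$, decompose $h = \alpha(C+z)+i$ with $\gamma \mid C$ and $z$ bounded, and fill the groups of sizes $\alpha A$ and $\alpha B$ with $K$-GDDs of types $g^{\alpha A/g}(\alpha z+i)^1$ and $g^{\alpha B/g}(\alpha z+i)^1$, which exist by Theorem~\ref{gh-fixedh} since the hole size $\alpha z+i$ is bounded and congruent to $g$ modulo $\alpha$. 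The set-aside group now has size $\alpha z+i$ and joins the last group to give hole size $\alpha C + \alpha z + i = h$, with no requirement that $g$ divide $\alpha C$ or $\alpha z + i$; the constraint $g \mid \alpha A, \alpha B$ is retained via the reduction $\gcd(g,\alpha)n/\alpha = tA_0+B_0$, as you did. With this change, the inequality analysis you sketched goes through essentially as written, since $(\alpha z+i)(k-2)$ is lower order and $C$ can be as large as $At/(k-2+\epsilon)$, yielding $h(k-2+\epsilon) < g(n-1)$.
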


\begin{proof}
Since the proof is very similar to those above, we just sketch a few essential details. 
Suppose we are given integers $n$, $g$ and $h$  satisfying the necessary divisibility
conditions. Let $g\equiv h\equiv i \pmod{\alpha}$ where $i < \alpha$. 
We write $gn = \alpha (tA +B)$ (as above) and $h=\alpha (C + z) + i$.
In this case we fill in the groups of our GDD of type 
$(\alpha A)^t (\alpha B)^1 (\alpha C)^1$ with GDDs of type $g^{\alpha A/g} (\alpha z+i)^1$ and $g^{\alpha B/g} (\alpha z+i)^1$, setting aside a common group  
of size $\alpha z+i$; these designs exist by Theorem~\ref{gh-fixedh}. The resulting 
GDD has type $g^nh^1$ where $h=\alpha (C+z) +i$.
Since $(\alpha z + i) (k-2) \ll g(\alpha B/g -1) = \alpha B -g$ and  
$C$ can be as large as $At/(k-2+\epsilon)$ in Lemma~\ref{ABC}, we have  the desired 
inequality $(\alpha C + \alpha z + i) (k-2 +\epsilon) <  \alpha (At + B) -g$
or $h (k-2 + \epsilon) < g(n-1)$.
\end{proof}

\subsection{Applications}

As in Section~\ref{ipbd-applications},  $K$-GDDs of type $g^nh^1$ can be used as 
`templates' in Wilson's Fundamental Construction to construct frames and HMOLS (or OPILS) 
with all but one of the groups (holes) the same. 
In this section, we consider a different type of application where, in practice, we want $h$ to 
be fairly small. Group divisible designs of type $g^nh^1$ are quite useful in the 
construction of asymptotically optimal packings and coverings. 
We give only a brief set-up and sketch the main idea, which originates in \cite{WilsonPP}.  The paper \cite{CCLW} can also be consulted
for more details and some related constructions.

A $(v,k,\lam)$-\emph{packing} is a pair $(V,\mathcal{B})$, where $|V|=v$, $\mathcal{B}$ is a family of $k$-subsets of $V$, and with the property that any two distinct elements of $V$ appear together in at most $\lam$ blocks.  A $(v,k,\lam)$-\emph{covering} is defined similarly, but with `at least' in place of `at most'.

Consider, for simplicity, the case of packings with $\lam=1$.  The \emph{leave} of such a packing is the graph $L=(V,E)$, where $\{x,y\} \in E$ if and only if there is no block containing $\{x,y\}$ (and isolated vertices are typically discarded from this graph).
A PBD$(v,k)$ is then a $(v,k,1)$-packing with empty leave. 

The number of blocks $b$ of a $(v,k,1)$-packing satisfies the Johnson bound 
\begin{equation}
\label{johnson-bound}
b \le \left\lfloor \frac{v}{k} \left\lfloor \frac{v-1}{k-1} \right\rfloor \right\rfloor.
\end{equation}

We illustrate our use of GDDs with two simple examples for block size $k=3$.

\begin{ex}
\label{packing-ex}
A $(5,3,1)$-packing can be constructed as two edge-disjoint triangles inside $K_5$; the leave in this case is isomorphic to the four cycle $C_4$. By `filling the hole' of an IPBD$((v;5);\{3\})$ with this example, one obtains an optimal $(v,3,1)$-packing for all $v \equiv 5 \pmod{6}$.  For $v \equiv 0,2 \pmod{6}$, a 3-GDD of type $2^{v/2}$ exists (by deleting a point and all incident blocks from a Steiner triple system of order $v+1$) and furnishes a $(v,3,1)$-packing whose leave is a perfect matching.
\end{ex}

As the preceding example suggests, the groups of a GDD can be filled with packings (or left unfilled) to produce packings with structured leaves.   In particular, using a GDD of type $g^n h^1$ for small $g$ leads to a recursive construction for a congruence class $h \pmod{gk(k-1)}$.

\begin{cons}
\label{packing-cons}
Let $k \ge 2$ and $g$ be an integer with $1 \le i \le k-1$.  
Suppose that, for some $h \equiv g \pmod{k-1}$, there exists an $(h,k,1)$-packing having at least $\lfloor \frac{h}{k} \lfloor \frac{h-1}{k-1} \rfloor \rfloor-e$ blocks.  Then there exists a 
$(v,k,1)$-packing with at least $\lfloor \frac{v}{k} \lfloor \frac{v-1}{k-1} \rfloor \rfloor-e$ blocks for all sufficiently large integers of the form $v \equiv h \pmod{gk(k-1)}$.
\end{cons}

Note that Construction~\ref{packing-cons} results in a leave consisting of several copies of $K_{g}$, a regular graph of degree $g-1$, together with the leave placed on the hole of size $h$.
This can be applied separately for different hole sizes $h$, and different congruence classes $g$ for $v$ modulo $k-1$.
It follows that, for fixed $k$, packings with some maximum deficiency $e$ from the Johnson bound (\ref{johnson-bound}) can be obtained for all sufficiently large integers $v$, provided a certain finite list of packings with maximum deficiency $e$ can be found.

A similar approach can be applied to construct optimal coverings. For example, 
the analogue of Example~\ref{packing-ex} for optimal $(v,3,1)$-coverings uses  4 small optimal coverings with block size $3$ for $v=4,5,6,$ and $8$, 
3-GDDs of types $6^{\frac{v}{6}}$, $6^{\frac{v-4}{6}}4^1$, and 
$6^{\frac{v-8}{6}}8^1$, together with Steiner triple systems and incomplete triple systems IPBD$((v;5),\{3\})$.
This treats all but a finite number of parameters for such coverings. Optimal coverings with block size $4$ 
can also be described using a few small coverings and 4-GDDs of types $6^n$ and 
$6^n15^1$, together with results 
for IPBD$((v,22);\{4\})$ and $(v,4,1)$-BIBDs. References and further information about these 
coverings  can be found in \cite{coverings-handbk}.

These  examples and construction illustrate the idea that 
group divisible designs of the form studied here provide a unified framework for 
constructing both optimal coverings and packings. This framework also extends to the  case of 
general $\lam$. We note  that 
these GDDs are particularly useful in those cases for which `small' explicit optimal 
packings or coverings can be constructed.   We leave a more detailed investigation of these
applications for future studies.

\end{document}